\documentclass[a4paper,12pt]{article}

\textwidth 180mm
\textheight 245mm
\topmargin -15mm
\oddsidemargin -10mm
\evensidemargin -10mm

\usepackage{comment}
\usepackage[utf8]{inputenc}
\usepackage[T1]{fontenc}
\usepackage{bbold}
\usepackage{amssymb}
\usepackage{amsmath, amsthm}
\usepackage{amsfonts}
\usepackage{graphics}
\usepackage{bm}
\usepackage{color}
\usepackage{xcolor}
\usepackage{graphicx}
\usepackage{enumerate}
\usepackage[english]{babel}
\usepackage{enumitem}

\newtheorem{coro}{Corollary}
\newtheorem{thm}{Theorem}

\newtheorem{cl}{Claim}
\newtheorem{Lemma}{Lemma}

 \title{On arborescence packing augmentation in hypergraphs}
\author{Pierre Hoppenot, Zolt\'an Szigeti\\ University Grenoble Alpes, Grenoble INP, CNRS,\\ Laboratory  G-SCOP}
\begin{document}
\maketitle

\begin{abstract}
We deepen the link between two classic areas of combinatorial optimization: augmentation and packing arborescences. We consider the following type of questions: What is the minimum number of arcs to be added to a digraph so that in the resulting digraph there exists some special kind of packing of arborescences? We answer this question for  two problems: $h$-regular {\sf M}-independent-rooted $(f,g)$-bounded $(\alpha, \beta)$-limited packing of mixed hyperarborescences and $h$-regular $(\ell, \ell')$-bordered $(\alpha, \beta)$-limited packing of $k$ hyperbranchings. We also solve the undirected counterpart of the latter, that is the augmentation problem for $h$-regular $(\ell, \ell')$-bordered $(\alpha, \beta)$-limited packing of $k$ rooted hyperforests. Our results provide a common generalization of a great number of previous results.
\end{abstract}

\section{Introduction}

The design of robust networks consists in   improving existing networks so that the resulting networks resist to different types of failures. A typical  problem is the global edge-connectivity augmentation problem where the goal is  to add a minimum number of edges to a given undirected graph to obtain a graph that remains connected after deleting any set of edges of a given size. Watanabe and Nakamura \cite{WANA}, Cai and Sun \cite{CASU} independently solved the problem in the sense of a minimax theorem and  an efficient algorithm. Frank \cite{FA92} developed a method to solve edge-connectivity augmentation problems in general which for instance provided the solution of the local edge-connectivity augmentation problem. His paper has stimulated further research in a great number of directions that led to many interesting generalizations. For a survey on the subject see \cite{SZAUG}. 

Let us now consider the directed case. In  \cite{FA92}, Frank solved the global arc-connectivity augmentation problem and proved that the local arc-connectivity augmentation problem  is NP-complete. Further, in \cite{book}, Frank solved the rooted $k$-arc-connectivity augmentation problem, that is when the local arc-connectivity requirement is $k$ from a given vertex $s$ to all the other vertices and $0$ otherwise. By the fundamental theorems of Edmonds \cite{Egy} and Menger \cite{Ketto}, this special case is equivalent to the augmentation problem for packing  $k$ spanning $s$-arborescences. In this paper we study some more complex arborescence packing augmentation problems. We mention that for any arborescence packing problem, the augmentation version extends the original problem. On the one hand, we propose the solution of the augmentation version of $h$-regular  {\sf M}-independent-rooted $(f,g)$-bounded $(\alpha,\beta)$-limited packing of mixed hyperarborescences. For the definitions see Section \ref{secdef}. Our solution depends on the discovery of a new submodular function and on the theory of generalized polymatroids. This way we are able to unify the results of two previous papers \cite{szighyp} and \cite{szigrooted} and to provide a simpler proof of this common extension. On the other hand, we propose the solution of the augmentation version of  $h$-regular $(\ell, \ell')$-bordered $(\alpha, \beta)$-limited packing of $k$ hyperbranchings, and also its undirected counterpart. Our solution depends on a new augmentation lemma and the recent results of \cite{hopmarszig}.

Our theorems generalize the results of the following papers: \cite{BF3}, \cite{cai1}, \cite{Egy}, \cite{FKLSzT}, \cite{FA78}, \cite{fkiki}, \cite{fkk},  \cite{gao}, \cite{gy2}, \cite{hopmarszig}, \cite{HSz5}, \cite{NW}, \cite{PCK},  \cite{szighyp}, \cite{szigrooted}, \cite{Tu}.

%
\section{Definitions}\label{secdef}

We use the usual notation for the set $\pmb{\mathbb{Z}}$ of integers, for the set $\pmb{\mathbb{Z}_+}$ of non-negative integers and $\pmb{\mathbb{Z}_k}=\{1,\dots,k\}.$ An inequality is called {\it tight} if it holds with equality, otherwise it is  {\it strict}. Let $V$ be a finite set. For a function $m: V\rightarrow \mathbb{Z}$ and a subset $X$ of $V,$ we define $m(X)=\sum_{x\in X}m(x).$ For a function $g:V\rightarrow \mathbb{Z}_+$ and a non-negative integer $h,$ the function {\boldmath $g_h$} is defined as follows: $g_h(v)=\min\{g(v),h\}$ for every $v\in V.$ The function {\boldmath $\infty_0$} on $V$ has value $\infty$ everywhere except for the emptyset where we set its value to $0.$ For a subset $X$ of $V,$ its {\it complement} is denoted by {\boldmath $\overline X$}. A {\it multiset}  of $V$ is a set of  elements of $V$  taken with multiplicities. 
For a multiset $S$ of $V$ and a subset $X$ of $V$, we denote by {\boldmath$S_X$} the multiset of $V$ which is the restriction of $S$ on $X.$ For $k\in\mathbb{Z}_+,$ we denote by {\boldmath$k\times V$} the multiset of $V$ where every element of $V$ is taken with multiplicity $k.$ A set ${\cal S}$ of subsets of $V$ is called a {\it family} if the subsets of $V$ are taken with multiplicities in ${\cal S}$. For a family ${\cal S}$ of subsets of $V$ and  a subset $X$ of $V$, we denote by {\boldmath${\cal S}_X$} the subfamily of ${\cal S}$ containing the sets in ${\cal S}$ that intersect $X$. We say that two subsets $X$ and $Y$ of $V$ are {\it properly intersecting} if none of $X\cap Y, X - Y,$ and $Y-X$ is empty. We mean by {\it uncrossing} two properly intersecting sets the operation that replaces the two sets by their intersection and their union.  

A set of mutually disjoint subsets of $V$ is called a {\it subpartition}. If ${\cal P}$ is a subpartition of $V$, then {\boldmath$\cup {\cal P}$} denotes the union of the sets in ${\cal P}$. If  ${\cal P}$ is a subpartition of $V$ and $\cup {\cal P}=V$, then ${\cal P}$ is called a {\it partition}. Let  ${\cal P}_1$ and ${\cal P}_2$ be two subpartitions of $V$, $P_1=\cup {\cal P}_1,$  $P_2=\cup {\cal P}_2,$ and ${\cal P}={\cal P}_1\cup {\cal P}_2.$  Note that ${\cal P}$ covers each element in $P_1\cap P_2$ twice and each element in $(P_1\cup P_2)-(P_1\cap P_2)$ once. Using the usual uncrossing method on ${\cal P}$, we obtain a family ${\cal P}'$ that contains no properly intersecting sets and  that covers each element in $P_1\cap P_2$ twice and each element in $(P_1\cup P_2)-(P_1\cap P_2)$ once. Then, by taking respectively the minimal  and maximal  sets in ${\cal P}'$, we obtain  a partition ${\cal P}'_1$ of $P_1\cap P_2$ and a partition ${\cal P}'_2$ of $P_1\cup P_2.$  We mention that while ${\cal P}'_1$ depends on the particular execution of the uncrossing method, $\cup{\cal P}'_1$, $|{\cal P}'_1|$, and ${\cal P}'_2$ are uniquely defined. We define ${\cal P}'_1$ as the intersection {\boldmath${\cal P}_1\sqcap {\cal P}_2$} of ${\cal P}_1$ and ${\cal P}_2$, and ${\cal P}'_2$ as the union {\boldmath${\cal P}_1\sqcup {\cal P}_2$} of ${\cal P}_1$ and ${\cal P}_2$. We will only use   properties on ${\cal P}_1\sqcap {\cal P}_2$ and ${\cal P}_1\sqcup {\cal P}_2$ that are true for every execution of the uncrossing method such as:
\begin{align}
&	\text{If $U_1 \in \mathcal{P}_1$ and $U_2 \in \mathcal{P}_2$ intersect, then an element of $\mathcal{P}_1 \sqcap \mathcal{P}_2$ contains $U_1 \cap U_2$.} \label{kvkjvk1}	\\
&	\text{If $U \in \mathcal{P}_1 \cup \mathcal{P}_2$, then an element of  $\mathcal{P}_1 \sqcup \mathcal{P}_2$ contains $U$.} \label{kvkjvk2}	\\
&	|\mathcal{P}_1| + |\mathcal{P}_2| = |\mathcal{P}_1 \sqcap \mathcal{P}_2| + |\mathcal{P}_1 \sqcup \mathcal{P}_2|.\label{kvkjvk3}
\end{align}

Let $S$ be a finite ground set. A set function $b$ on $S$ is called {\it non-decreasing} if $b(X)\le b(Y)$ for all $X\subseteq Y\subseteq S,$  {\it subcardinal} if $b(X)\le |X|$ for every $X\subseteq S,$ and   {\it submodular} if  $b(X)+b(Y) 	\geq  b(X\cap Y)+b(X\cup Y)$ for  all $X,Y\subseteq S.$ A set function $p$ on $S$ is called {\it supermodular} if $-p$ is submodular. A set function $m$ on $S$ is called {\it modular} if it is submodular and supermodular. Let $r$ be a non-negative integer-valued function on $S$ such that  $r$ is subcardinal, non-decreasing and submodular. Then {\boldmath${\sf M}$} $=(S,r)$ is called a {\it matroid}. The function $r$ is called the {\it rank function} of the matroid ${\sf M}.$ If a matroid ${\sf M}$ is given, then we denote its rank function by {\boldmath$r_{\sf M}$}. An {\it independent set} of ${\sf M}$ is a subset $X$ of $S$ such that $r_{\sf M}(X)=|X|.$  The set of independent sets of ${\sf M}$ is denoted by {\boldmath${\cal I}_{\sf M}$}. A maximal independent set of ${\sf M}$ is called a {\it basis}. The {\it free matroid} is the matroid where every subset of $S$ is independent. The {\it uniform matroid} $U_{S,k}$ of rank $k$ is the matroid whose independent sets are the subsets of $S$ of size at most $k.$ 

Let {\boldmath$D$} $=(V,A)$ be a directed graph or {\it digraph} with {\it vertex set}  {\boldmath$V$}  and   {\it arc set} {\boldmath$A$}. An {\it arc} $e=uv$ is an ordered pair of different vertices $u$ (the {\it tail} of  $e$) and $v$ (the {\it head} of  $e$).  
For a subset $X$ of $V,$ the set of arcs in $A$ {\it entering $X$}, that is their heads are in $X$ and their tails are in $\overline X$, is denoted by {\boldmath$\delta^-_A(X)$}. The {\it in-degree} of $X$ is {\boldmath$d^-_A(X)$} $=|\delta^-_A(X)|.$ A digraph $F=(U,B)$ is called an {\it $s$-arborescence} if $s\in U$ and every vertex of $F$ is reachable from $s$ via a unique path in $F.$  The vertex $s$ is called the {\it root} of the $s$-arborescence.  We say that $F$ is  an {\it $S$-branching} if $S\subseteq U$ and there exists a unique path from $S$ to every $v\in U$ in $F.$ The vertex set $S$ is called the {\it root set} of the $S$-branching. A branching $F$ is called a {\it spanning branching} of $D$ if $U=V$ and $B\subseteq A$. Note that if $S=\{s\}$, then an $S$-branching is an $s$-arborescence. For non-negative integer-valued functions $f$ and $g$ on $V$, an arc set $F$ is called {\it $(f,g)$-indegree-bounded} if $f(v)\le d_F^-(v)\le g(v)$ for every $v\in V.$ For non-negative integers $q$ and $q'$,  an arc set $F$ is called {\it $(q,q')$-size-limited} if $q\le |F|\le q'.$ 

 Let {\boldmath$\mathcal{D}$} $=(V,\mathcal{A})$ be a directed hypergraph or {\it dypergraph} with dyperedge set  {\boldmath$\mathcal{A}$}. A {\it dyperedge} $e=Zz$ is an ordered pair of a non-empty subset $Z$ of $V-z$ (the set of {\it tails}  of $e$) and a vertex $z$ in $V$ (the  {\it head} of $e$). For a  subset $X$ of $V,$ a dyperedge $Zz$ {\it enters $X$} if $z\in X$ and $Z\cap \overline X\neq\emptyset.$ The  set of dyperedges in $\mathcal{A}$ {\it entering $X$} is denoted by {\boldmath$\delta^-_\mathcal{A}(X)$}  and the {\it in-degree} of $X$ is {\boldmath$d^-_\mathcal{A}(X)$} $=|\delta^-_\mathcal{A}(X)|.$ By {\it trimming} a dyperedge $Zz$, we mean the operation that replaces $Zz$ by an arc $yz$ for some $y\in Z.$ We say that $\mathcal{F}$ is  an {\it $S$-hyperbranching} if $\mathcal{F}$ can be trimmed to an  $S$-hyperbranching. If $S=\{s\},$ then an  $S$-branching is called an {\it $s$-hyperarborescence}.  An  $S$-hyperbranching $\mathcal{F}=(U,\mathcal{B})$ is called a \textit{spanning $S$-hyperbranching} of $\mathcal{D}$ if $U=V$, $\mathcal{B}\subseteq \mathcal{A}$, and $|S|+|\mathcal{B}|=|V|.$ 

Let {\boldmath$\mathcal{F}$} $=(V,\mathcal{E}\cup \mathcal{A})$ be a {\it mixed hypergraph} with hyperedge set {\boldmath$\mathcal{E}$}  and dyperedge set {\boldmath$\mathcal{A}$}. A {\it hyperedge} is a subset $Z$ of $V$ containing at least two distinct elements. For a  subset $X$ of $V,$  a hyperedge $Z$ {\it enters}  $X$ if $Z\cap X\neq\emptyset\neq Z\cap \overline X.$ By {\it orienting} a hyperedge $Z\in \mathcal{E},$ we mean the operation that replaces the hyperedge $Z$ by a dyperedge $Z'z$ where $z\in Z$ and $Z'=Z-z.$ An {\it orientation} of $\mathcal{F}$ is obtained from $\mathcal{F}$ by orienting every hyperedge in $\mathcal{E}$. A {\it mixed (spanning) $S$-hyperbranching} is a mixed hypergraph  that has an orientation that is a (spanning)  $S$-hyperbranching. In particular, if $S=\{s\},$ then we are speaking of a mixed (spanning)  $s$-hyperarborescence. 
A mixed $S$-hyperbranching is called a {\it rooted $S$-hyperforest} if it contains no dyperedge. For a subpartition ${\cal P}$ of $V$, we denote by {\boldmath$e_{\mathcal{E}\cup \mathcal{A}}({\cal P})$} the number of hyperedges in $\mathcal{E}$ and dyperedges in $\mathcal{A}$ that enter at least one member of ${\cal P}$. By a {\it packing} of mixed hyperbranchings in $\mathcal{F}$, we mean a set of mixed hyperbranchings that are hyperedge- and dyperedge-disjoint.

Let ${\cal B}$ be a packing of arborescences in a digraph $D$. For a positive integer $h$, the packing ${\cal B}$ is called {\it $h$-regular} if each vertex of $D$ belongs to exactly $h$ arborescences in ${\cal B}$. For non-negative integer-valued functions $f$ and $g$ on $V$, the packing ${\cal B}$ is called {\it $(f,g)$-bounded} if the number of arborescences in ${\cal B}$ rooted at $v$ is at least $f(v)$ and at most $g(v)$ for every vertex $v$ of $D.$ For non-negative integers $\alpha$ and $\beta$, the packing ${\cal B}$ is called {\it $(\alpha,\beta)$-limited} if the number of arborescences in ${\cal B}$ is at least $\alpha$ and at most $\beta.$ For a multiset $S$ of vertices in $V$ and a matroid {\sf M} on $S$, the packing ${\cal B}$ is called {\it {\sf M}-independent-rooted} if the root set of the arborescences in ${\cal B}$ forms an independent set in {\sf M}. If the root set of the arborescences in ${\cal B}$ forms a basis in {\sf M}, then  the packing ${\cal B}$ is called {\it {\sf M}-basis-rooted}. 

Let $\mathcal{D}$ be a dypergraph,  $\mathcal{F}$  a mixed hypergraph and  {\sf  P} a subset of the properties of {\sf M}-independent-rooted, {\sf M}-bases-rooted,  $(f,g)$-bounded, $h$-regular, and $(\alpha,\beta)$-limited. We say that $\mathcal{D}$ has a {\it {\sf  P} packing} of hyperarborescences if $\mathcal{D}$  can be trimmed to a digraph that has a {\sf  P} packing of arborescences. We say that $\mathcal{F}$ has a {\it {\sf  P} packing} of mixed hyperarborescences if $\mathcal{F}$  can be oriented to a dypergraph that has a {\sf  P} packing of hyperarborescences. For $\ell, \ell' : \mathbb{Z}_k \rightarrow \mathbb{Z}_+$, a packing of $k$ hyperbranchings  (rooted hyperforests) with root sets $S_1,\dots,S_k$  in $\mathcal{F}$  is said to be \textit{$(\ell, \ell')$-bordered}  if  $\ell(i)\le |S_i| \le \ell'(i)$ for every $1 \le i \le k.$

\section{Generalized polymatroids}

We present the necessary definitions and results from the theory of generalized polymatroids. In this section let $p$ and $b$ be a supermodular and a submodular set function on $S$ such that $p(\emptyset)=0=b(\emptyset)$. Let $f$ and $g$  be non-negative integer-valued functions on $S$ and $\alpha$ and $\beta$ non-negative integers. We will use the following polyhedra.
 \begin{eqnarray*}
\text{\boldmath$Q(p,b)$} 			&	=	&	\{x\in \mathbb{R}^S: p(Z)\le x(Z)\le b(Z)\ \text{ for all } Z\subseteq S\},	\\
  \text{\boldmath$T(f,g)$}			& 	=	&	\{x\in \mathbb{R}^S:  f(s)\le x(s)\le g(s)\ \text{ for all }  s\in S\},			\\
\text{\boldmath$K(\alpha,\beta)$} 	&	=	&	\{x\in \mathbb{R}^S: \alpha\le x(S)\le \beta\},						\\
\text{\boldmath$Q_1+Q_2$} 		&	=	&	\{x_1+x_2: x_1\in Q_1,x_2\in Q_2\}.
\end{eqnarray*}

If  $b(X)-p(Y )\ge b(X-Y )-p(Y-X)$ for all $X,Y\subseteq S,$ then $Q(p,b)$ is called a {\it generalized polymatroid}, shortly {\it $g$-polymatroid}.
\medskip

We need the following results on generalized polymatroids.

\begin{thm}[Frank \cite{book}]\label{gpm}
The following hold.
\begin{enumerate}[itemsep=0cm]
	\item \label{TQ} $T(f,g)\neq\emptyset$ if and only if $f\le g.$ If $T(f,g)\neq\emptyset,$   then it is a g-polymatroid $Q(f,g).$ 
		\item \label{QcapT} $Q(p,b)\cap T(f,g)\neq\emptyset$  if and only if $\max\{p,f\}\le\min\{b,g\}.$
 		If $Q(p,b)\cap T(f,g)\neq\emptyset$, then it is a g-polymatroid $Q(p^g_f, b^g_f)$ with
		\begin{eqnarray*}
			\text{{\boldmath $p^g_f(Z)$}} &=&\max \{p(X)-g(X-Z)+f(Z-X): X\subseteq S\},\label{interboxp}\\
			\text{{\boldmath $b^g_f(Z)$}} &=&\min\ \{b(X)-f(X-Z)+g(Z-X): X\subseteq S\}.\label{interboxb}
		\end{eqnarray*}
	\item \label{QcapK} $Q(p,b)\cap K(\alpha,\beta)\neq\emptyset$ if and only if $p\le b$, $\alpha\le \beta,$ 
		$\beta\ge p(S)$ and $\alpha\le b(S).$ If $Q(p,b)\cap K(\alpha,\beta)\neq\emptyset$, then  it is a 
		g-polymatroid  $Q(p_\alpha^\beta, b_\alpha^\beta)$ with
		\begin{eqnarray*}
			\text{{\boldmath $p_\alpha^\beta(Z)$}} &=&\max\{p(Z), \alpha-b(\overline Z)\}, \\
		\ \ \text{{\boldmath $b_\alpha^\beta(Z)$}} &=&\min\{b(Z), \beta-p(\overline Z)\}.
		\end{eqnarray*}
	\item \label{sum} $Q(p_1,b_1)+Q(p_2,b_2)=Q(p_1+p_2,b_1+b_2).$
		If $p_1,b_1,p_2,b_2$ are integral, then every integral element  of $Q(p_1+p_2,b_1+b_2)$ arises as 	
		the sum of an integral element of $Q(p_1,b_1)$ and an integral element of $Q(p_2,b_2)$.
	\item \label{QcapQ} $Q(p_1,b_1)\cap Q(p_2,b_2)\neq\emptyset$  if and only if $p_1\le b_2$ and $p_2\le b_1$. 
		If $p_1,b_1,p_2,b_2$ are integral and the intersection is not empty, then it contains an integral element.
\end{enumerate}
\end{thm}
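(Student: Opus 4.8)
Since Theorem~\ref{gpm} is Frank's standard toolkit for generalized polymatroids, the honest shortest route is to cite \cite{book}; were I to reprove it, the plan would be to isolate a single engine — an uncrossing / total-dual-integrality argument for paramodular pairs — and derive every item from it. I would first fix notation: extend the point functions $f,g$ to the modular set functions $f(Z)=\sum_{s\in Z}f(s)$ and $g(Z)=\sum_{s\in Z}g(s)$, and recall the basic dichotomy that a pair $(p,b)$ with $p$ supermodular, $b$ submodular, $p(\emptyset)=b(\emptyset)=0$ defines a nonempty g-polymatroid exactly when it is \emph{paramodular}, i.e.\ $b(X)-p(Y)\ge b(X-Y)-p(Y-X)$ for all $X,Y\subseteq S$, in which case the defining linear system is box-TDI; so $Q(p,b)$ is then integral and stays integral after intersection with an integral box. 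For Item~\ref{TQ} I would simply observe that $T(f,g)=\prod_{s\in S}[f(s),g(s)]$ is nonempty iff $f\le g$ pointwise, and that when $f\le g$ the pair $(f,g)$ is paramodular by the one-line modularity identity $g(X)-f(Y)=\bigl(g(X-Y)-f(Y-X)\bigr)+\bigl(g(X\cap Y)-f(X\cap Y)\bigr)\ge g(X-Y)-f(Y-X)$; since the singleton inequalities already force $f(Z)\le x(Z)\le g(Z)$ for all $Z$, one gets $T(f,g)=Q(f,g)$.

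For Items~\ref{QcapT} and~\ref{QcapK} the common plan is that the extra constraint is, in (2), the box $T(f,g)=Q(f,g)$ of Item~\ref{TQ}, and in (3) the ``plank'' $K(\alpha,\beta)$, which is the (possibly $\pm\infty$-valued) paramodular polyhedron constraining only $x(S)$. Necessity of the listed conditions I would get by summing the relevant lower and upper inequalities over a set $Z$: in (2) this squeezes $x(Z)$ between $\max\{p(Z),f(Z)\}$ and $\min\{b(Z),g(Z)\}$, and in (3) it puts $x(S)\in[\alpha,\beta]\cap[p(S),b(S)]$, whence $\beta\ge p(S)$ and $\alpha\le b(S)$. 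For sufficiency I would verify that the advertised pairs $(p^g_f,b^g_f)$ and $(p^\beta_\alpha,b^\beta_\alpha)$ are paramodular and cut out precisely the intersection. One containment is by inspection: in $p^g_f(Z)=\max_X\{p(X)-g(X-Z)+f(Z-X)\}$ the choices $X=Z$, $X=\emptyset$, $X=S$ recover respectively $p(Z)$, $f(Z)$, and the complement bound $x(Z)=x(S)-x(\overline Z)\ge p(S)-g(\overline Z)$, and every term is a valid lower bound for $x(Z)$ on $Q(p,b)\cap T(f,g)$; symmetrically for $b^g_f$, and in the $\alpha,\beta$ version the term $\alpha-b(\overline Z)$ is just $x(Z)\ge\alpha-x(\overline Z)\ge\alpha-b(\overline Z)$. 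Conversely, imposing $p^g_f(Z)\le x(Z)\le b^g_f(Z)$ (resp.\ the $\alpha,\beta$ analogue) for all $Z$ already forces $x$ back into the intersection, by taking $X\in\{\emptyset,Z\}$.

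The remaining point — and the step I expect to be the main obstacle — is showing that the derived pairs are paramodular; this I would prove by uncrossing the sets $X$ attaining the maximum defining $p^g_f$ (and, separately, the minimum defining $b^g_f$, and likewise in the $\alpha,\beta$ versions), replacing two tight sets by their intersection and union and using supermodularity of $p$, submodularity of $b$, and modularity of $f,g$. This single uncrossing argument simultaneously delivers nonemptiness under the stated conditions and the g-polymatroid structure.

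For Items~\ref{sum} and~\ref{QcapQ} I would first dispatch $Q(p_1,b_1)+Q(p_2,b_2)\subseteq Q(p_1+p_2,b_1+b_2)$ by adding inequalities, and then observe that the reverse inclusion is itself an instance of Item~\ref{QcapQ}: for (integral) $x\in Q(p_1+p_2,b_1+b_2)$, finding $x_1\in Q(p_1,b_1)$ with $x-x_1\in Q(p_2,b_2)$ amounts to $Q(p_1,b_1)\cap Q(x-b_2,\,x-p_2)\neq\emptyset$, where $(x-b_2,x-p_2)$ is again paramodular, and the feasibility criterion of Item~\ref{QcapQ} for this pair reads exactly $p_1+p_2\le x\le b_1+b_2$, with the integral witness supplied by the integrality clause. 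Thus everything funnels into Item~\ref{QcapQ}: necessity of $p_1\le b_2$ and $p_2\le b_1$ is immediate, and for sufficiency together with integrality I would prove that the combined system $\{p_1\le x\le b_1,\ p_2\le x\le b_2\}$ is TDI, uncrossing the active supermodular lower-bound constraints into one laminar family and the active submodular upper-bound constraints into another; this forces the dual optimum to be integral and produces a min-max value whose sign is controlled precisely by $p_1\le b_2$ and $p_2\le b_1$. As with Items~\ref{QcapT}--\ref{QcapK}, this uncrossing/TDI argument is where all the difficulty lies; the rest is bookkeeping on top of it.
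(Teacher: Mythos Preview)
The paper does not prove Theorem~\ref{gpm}; it is stated as a cited background result from Frank's book~\cite{book} and used as a black box. Your opening sentence---that the honest shortest route is simply to cite~\cite{book}---is therefore exactly what the paper does, and nothing more is required. The proof sketch you offer afterward is reasonable extra material, but it goes beyond what the paper itself contains and is not needed for comparison.
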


\section{Packing problems}\label{Packingproblems}

This section lists some results on packing arborescences or more generally on packing mixed hyperbranchings relevant to this paper. It contains three subsections: the first one on packing mixed arborescences, the second one on packing mixed hyperarborescences, and the last one on packing mixed hyperbranchings.

\subsection{Packing arborescences and  mixed arborescences}

We start our list by the classic result of Edmonds \cite{Egy} on packing arborescences with fixed roots.

\begin{thm}[Edmonds \cite{Egy}]\label{edmondsarborescencesmulti}
Let $D=(V,A)$ be a digraph and $S=\{s_1,\dots,s_k\}$ a multiset of vertices in $V.$  
There exists a packing of $k$ spanning arborescences with roots $s_1,\dots,s_k$ in $D$ if and only if 
	\begin{eqnarray*}
		|S_X|+d^-_A(X)&\geq &k \hskip .5truecm \text{ for every non-empty  $X\subseteq V.$}
	\end{eqnarray*}
\end{thm}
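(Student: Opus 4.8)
The plan is to treat the two directions separately: necessity is a one‑line counting argument, and sufficiency is the classical induction on $k$ that peels off one spanning arborescence at a time. \emph{Necessity.} Suppose $B_1,\dots,B_k$ is a packing of spanning arborescences in $D$ with $B_i$ rooted at $s_i$, and fix a non‑empty $X\subseteq V$. For each index $i$ with $s_i\notin X$, the vertices of $X$ are reachable from $s_i$ inside $B_i$, so $B_i$ contains an arc entering $X$; as the $B_i$ are pairwise arc‑disjoint, these are $|\{i:s_i\notin X\}|=k-|S_X|$ distinct arcs of $A$ entering $X$. Hence $d^-_A(X)\ge k-|S_X|$, which is the claimed inequality.

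\emph{Sufficiency, reduction.} I would induct on $k$; the case $k=0$ is vacuous. For $k\ge 1$ everything comes down to the following peeling step: if $D,S$ satisfy the hypothesis, then there is a spanning arborescence $B_1$ rooted at $s_1$ such that $D'=(V,A\setminus B_1)$ and $S'=S-s_1$ again satisfy the hypothesis with $k-1$ in place of $k$. Granting this, the induction hypothesis yields a packing of $k-1$ spanning arborescences rooted at $s_2,\dots,s_k$ in $D'$, and adding $B_1$ gives the required packing in $D$.

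\emph{Sufficiency, the peeling step.} Call a non‑empty $X$ \emph{tight} if $|S_X|+d^-_A(X)=k$. Deleting the arcs of a spanning $s_1$‑arborescence $B_1$ lowers $|S_X|$ by $[s_1\in X]$ and $d^-_A(X)$ by $d^-_{B_1}(X)$, so the hypothesis survives with $k-1$ if and only if $d^-_{B_1}(X)\le |S_X|+d^-_A(X)-k+[s_1\notin X]$ for every non‑empty $X$; on a tight set this forces $d^-_{B_1}(X)\le 1$ when $s_1\notin X$ and $d^-_{B_1}(X)=0$ when $s_1\in X$. The structural fact that makes such a $B_1$ exist is that the tight family is closed under uncrossing: if tight sets $X$ and $Y$ properly intersect, then adding the modular identity $|S_X|+|S_Y|=|S_{X\cap Y}|+|S_{X\cup Y}|$ to the submodular inequality $d^-_A(X)+d^-_A(Y)\ge d^-_A(X\cap Y)+d^-_A(X\cup Y)$ and applying the hypothesis to the non‑empty sets $X\cap Y$ and $X\cup Y$ forces equality throughout, so both $X\cap Y$ and $X\cup Y$ are tight. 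I would then build $B_1$ by a reaching process: keep a set $U$ with $s_1\in U$ and a partial $s_1$‑arborescence spanning $U$ that obeys the crossing bounds above, and while $U\ne V$ extend it by an arc of $A\setminus B_1$ from $U$ to $V\setminus U$. Such an arc always exists, since $s_1\notin V\setminus U$ forces $|S_{V\setminus U}|\le k-1$ and hence $d^-_A(V\setminus U)\ge 1$, while no arc of the partial arborescence enters $V\setminus U$.

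\emph{Main obstacle.} The one genuinely delicate point is that at each stage of the reaching process one can choose the new arc so as not to over‑enter any tight set; this is exactly where closure under uncrossing is used. Concretely, one forms the union $W$ of all tight sets that are currently at their crossing bound and meet $V\setminus U$, observes that $W$ is again tight (or empty), and uses the hypothesis applied to $W$ together with the feasibility of the partial arborescence to produce an admissible extension arc, so that the process terminates with the desired $B_1$. This is the classical argument of Edmonds and Lovász; as an alternative one can route the whole proof through matroid intersection, or, in the spirit of the present paper, through the generalized‑polymatroid results of Theorem~\ref{gpm}.
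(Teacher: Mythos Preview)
The paper does not supply its own proof of this statement: Theorem~\ref{edmondsarborescencesmulti} is quoted as Edmonds' classical result and is used as a black box (it is later recovered as the special case $f(v)=|S_v|=g(v)$, $k=|S|$ of the more general Theorems~\ref{frankarborescencesroots} and~\ref{jegdvvlmcblkcbkuhyper}, which themselves are cited rather than proved here). So there is nothing in the paper to compare your argument against; what you wrote is precisely the classical Lov\'asz peeling proof that any textbook would give for this theorem.

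Your necessity argument and the inductive reduction are fine. One imprecision in your ``Main obstacle'' paragraph is worth flagging: you say you take $W$ to be the \emph{union of all} tight sets at their crossing bound that meet $V\setminus U$ and that $W$ is again tight. Uncrossing only gives you that the union of two \emph{intersecting} tight sets is tight; two disjoint tight sets need not have a tight union. The standard way to finish is not to take a global union but, for a tentative extension arc $uv$ with $v\in V\setminus U$, to take the unique \emph{maximal} dangerous tight set $X$ containing $v$ (uniqueness does follow from uncrossing, since all such sets contain $v$), observe that $X\cup U\neq V$ and that some arc of $A\setminus B_1$ enters $V\setminus(X\cup U)$ from $U$, and argue that this arc is admissible by maximality of $X$. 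With that correction your sketch is the genuine Lov\'asz argument; alternatively, as you note, one can bypass the hand argument entirely via matroid intersection or via the $g$-polymatroid machinery of Theorem~\ref{gpm}, which is closer in spirit to how the paper handles its own (more general) packing statements.
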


The next result  on packing arborescences with flexible roots is due to Frank \cite{FA78}.

\begin{thm}[Frank \cite{FA78}]\label{frankarborescences} 
Let $D=(V,A)$ be a digraph and $k\in \mathbb{Z}_+.$
 There exists a packing of   $k$  spanning arborescences in $D$ if and only if 
 	\begin{eqnarray*}  
		e_A({\cal P})+k &\geq &k|{\cal P}| \hskip .5truecm \text{ for every subpartition ${\cal P}$ of  $V.$}
	\end{eqnarray*}
\end{thm}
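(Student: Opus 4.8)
The plan is to prove necessity by a direct degree count and to obtain sufficiency by reducing to Edmonds' theorem (Theorem~\ref{edmondsarborescencesmulti}) after guessing a suitable multiset of roots.

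For necessity, suppose $\{F_1,\dots,F_k\}$ is a packing of $k$ spanning arborescences with roots $r_1,\dots,r_k$, and let $\mathcal{P}$ be a subpartition of $V$. For every member $X$ of $\mathcal{P}$ with $r_i\notin X$, the arborescence $F_i$ uses an arc entering $X$ (on the path from $r_i$ to any vertex of $X$), and since distinct members of $\mathcal{P}$ are disjoint an arc enters at most one of them; as $r_i$ lies in at most one member, $F_i$ contains at least $|\mathcal{P}|-1$ arcs entering members of $\mathcal{P}$. Summing over the arc-disjoint $F_i$ yields $e_A(\mathcal{P})\ge k(|\mathcal{P}|-1)$, which is the stated inequality. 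This direction is routine.

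For sufficiency, observe first that a packing of $k$ spanning arborescences in $D$ is exactly a packing of $k$ spanning arborescences with root multiset $R$ for some multiset $R$ of $V$ with $|R|=k$, and by Theorem~\ref{edmondsarborescencesmulti} the latter exists if and only if $|R_X|+d^-_A(X)\ge k$ for every non-empty $X\subseteq V$. Encoding $R$ by its multiplicity vector $y\in\mathbb{Z}_+^V$, it therefore suffices to produce $y\in\mathbb{Z}_+^V$ with $y(V)=k$ and $y(X)\ge k-d^-_A(X)$ for every non-empty $X\subseteq V$. I would get it by taking, among all $y\in\mathbb{Z}_+^V$ satisfying those lower bounds (the constant vector $k$ does, so the set is non-empty), one with $y(V)$ minimum, showing $y(V)\le k$, and then adding the non-negative integer $k-y(V)$ to a single coordinate; the resulting vector produces the desired packing via Theorem~\ref{edmondsarborescencesmulti}. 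To bound $y(V)$ I would run the usual uncrossing argument on the \emph{tight} sets, i.e.\ the non-empty $X$ with $y(X)=k-d^-_A(X)$: minimality of $y(V)$ forces every $v$ with $y(v)>0$ to lie in some tight set (else $y(v)$ could be decreased), and since $y$ is modular and $d^-_A$ submodular, whenever two tight sets intersect both their intersection and their union are tight; hence the inclusion-maximal tight sets are pairwise disjoint and each $v$ with $y(v)>0$ lies in exactly one of them. Letting $\mathcal{P}$ be the subpartition of the maximal tight sets meeting $\{v:y(v)>0\}$, we get $y(V)=\sum_{X\in\mathcal{P}}y(X)=\sum_{X\in\mathcal{P}}\bigl(k-d^-_A(X)\bigr)=k|\mathcal{P}|-e_A(\mathcal{P})$, the last step again using that an arc enters at most one member of a subpartition; the hypothesis applied to $\mathcal{P}$ is precisely $k|\mathcal{P}|-e_A(\mathcal{P})\le k$, so $y(V)\le k$.

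The only step with real content is this uncrossing argument; equivalently, since $X\mapsto k-d^-_A(X)$ is intersecting supermodular, one may just invoke Frank's min-max theorem on covering an intersecting supermodular function by a non-negative integral vector and read off the subpartition bound. I therefore expect the main (and rather minor) obstacle to be only the tight-set uncrossing bookkeeping, or the choice of which standard black box to cite; the reduction to Edmonds' theorem and the translation into the subpartition inequality are immediate.
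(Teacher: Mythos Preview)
Your proof is correct. The paper does not actually prove this theorem: it attributes it to Frank~\cite{FA78} and simply remarks that ``it is well-known that Theorems~\ref{edmondsarborescencesmulti} and~\ref{frankarborescences} are equivalent.'' Your argument is precisely a clean execution of that equivalence, deriving Frank's condition from Edmonds' theorem by choosing a root multiset via the standard tight-set uncrossing (equivalently, via the min--max for covering an intersecting supermodular function). So you are not diverging from the paper; you are supplying the details it leaves implicit.
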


It is well-known that  Theorems \ref{edmondsarborescencesmulti}  and \ref{frankarborescences} are equivalent.
\medskip

Theorem \ref{frankarborescences} was generalized for $(f,g)$-bounded packings as follows.

\begin{thm}[Frank \cite{FA78}, Cai \cite{cai1}]\label{frankarborescencesroots} 
Let $D=(V,A)$ be a digraph, $f,g: V\rightarrow \mathbb Z_+$ functions and $k\in \mathbb{Z}_+.$
There exists an $(f,g)$-bounded packing of   $k$  spanning arborescences in $D$  if and only if 
\begin{eqnarray}
	g(v)			& 	\geq 		&	f(v)\hskip .52truecm  \text{ for every } v \in V,\label{fg} \\ 
	e_A({\cal P})+\min\{k-f(\overline{\cup\mathcal{P}}), g({\cup\mathcal{P}})\}		&	\ge 	& 	k|\mathcal{P}| 
		\hskip .52truecm \text{ for every subpartition } \mathcal{P} \text{ of } V.
\end{eqnarray}
\end{thm}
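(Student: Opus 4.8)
The plan is to reduce the statement to Edmonds' Theorem~\ref{edmondsarborescencesmulti} via the \emph{root-count vector} and to settle the resulting existence question with the generalized-polymatroid machinery of Theorem~\ref{gpm}. First I would observe that an $(f,g)$-bounded packing of $k$ spanning arborescences of $D$ exists if and only if there is an integer vector $m\colon V\to\mathbb{Z}_+$ with $m(V)=k$, $f\le m\le g$ and $m(X)+d^-_A(X)\ge k$ for every non-empty $X\subseteq V$: from such an $m$, Theorem~\ref{edmondsarborescencesmulti} applied to the multiset in which each $v$ has multiplicity $m(v)$ yields a packing of $k$ spanning arborescences with exactly $m(v)$ of them rooted at $v$, and conversely $m$ is read off from any packing. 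So the task becomes: find an integer point of
\[
\mathcal{R}=\{m\in\mathbb{R}^V:\ m(V)=k,\ m\ge 0,\ m(X)\ge k-d^-_A(X)\ \text{for all }\emptyset\neq X\subseteq V\}
\]
that lies in the box $T(f,g)$.

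The main work is to exhibit $\mathcal{R}$ as a g-polymatroid with an explicit bounding function. I would set, for $Z\subseteq V$,
\[
p(Z)=\max\bigl\{\,k|\mathcal{P}|-e_A(\mathcal{P})\ :\ \mathcal{P}\ \text{a subpartition of}\ V\ \text{with}\ \cup\mathcal{P}\subseteq Z\,\bigr\},
\]
admitting the empty subpartition (so $p\ge 0$, $p(\emptyset)=0$), and $b(Z)=k-p(\overline Z)$, and then prove that $p$ is supermodular and that $\mathcal{R}=Q(p,b)$. Supermodularity of $p$ should follow from the uncrossing calculus: for $A,B\subseteq V$, take subpartitions $\mathcal{P}_A,\mathcal{P}_B$ attaining $p(A),p(B)$; by \eqref{kvkjvk1}--\eqref{kvkjvk2} the union of $\mathcal{P}_A\sqcap\mathcal{P}_B$ lies in $A\cap B$ and that of $\mathcal{P}_A\sqcup\mathcal{P}_B$ in $A\cup B$; a case analysis on the head of each arc gives $e_A(\mathcal{P}_A)+e_A(\mathcal{P}_B)\ge e_A(\mathcal{P}_A\sqcap\mathcal{P}_B)+e_A(\mathcal{P}_A\sqcup\mathcal{P}_B)$; and with \eqref{kvkjvk3} this yields $p(A)+p(B)\le p(A\cap B)+p(A\cup B)$. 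The identity $\mathcal{R}=Q(p,b)$ would be checked directly: if $m\in\mathcal{R}$ and $\cup\mathcal{P}\subseteq Z$, then --- using that each arc entering a member of a subpartition enters exactly one --- $m(Z)\ge m(\cup\mathcal{P})=\sum_{X\in\mathcal{P}}m(X)\ge k|\mathcal{P}|-e_A(\mathcal{P})$, hence $m(Z)\ge p(Z)$ and $m(Z)=k-m(\overline Z)\le b(Z)$; conversely the choices $\mathcal{P}=\{v\}$, $\mathcal{P}=\{X\}$, $\mathcal{P}=\{V\}$ recover $m\ge 0$, the degree inequalities, and $m(V)=k$ (noting $b(V)=k-p(\emptyset)=k$, and that $p(V)>k$ would force $\mathcal{R}=Q(p,b)=\emptyset$).

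With this in hand the conclusion should be mechanical. Condition~\eqref{fg} is exactly $T(f,g)\neq\emptyset$, and under it $T(f,g)=Q(f,g)$ is a g-polymatroid by Theorem~\ref{gpm}.\ref{TQ}. If the subpartition condition of the theorem holds, then $k|\mathcal{P}|-e_A(\mathcal{P})\le k$ for every subpartition, so $p(V)=k$; then $b(\emptyset)=0$ and the inequality $b(X)-p(Y)\ge b(X-Y)-p(Y-X)$ is precisely the supermodularity of $p$ at $\overline X$ and $Y$, so $Q(p,b)$ is a g-polymatroid. Theorem~\ref{gpm}.\ref{QcapQ} then produces an integer point of $Q(p,b)\cap Q(f,g)$ as soon as $p\le g$ and $f\le b$. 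Finally I would unwind these two inequalities, using that $f$ and $g$ are monotone set functions (being sums of non-negative vertex values): $p\le g$ becomes $k|\mathcal{P}|-e_A(\mathcal{P})\le g(\cup\mathcal{P})$ for all subpartitions $\mathcal{P}$, and $f\le b$ (that is, $p(Y)\le k-f(\overline Y)$ for all $Y$) becomes $k|\mathcal{P}|-e_A(\mathcal{P})\le k-f(\overline{\cup\mathcal{P}})$ for all subpartitions $\mathcal{P}$; together these are exactly the subpartition condition of the theorem (which in particular re-implies $p(V)=k$). The converse direction runs the same chain backwards; necessity of~\eqref{fg} is trivial, and necessity of the subpartition condition also follows directly from Theorem~\ref{edmondsarborescencesmulti} by summing $m(X)+d^-_A(X)\ge k$ over the members of a subpartition.

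The step I expect to be the main obstacle is the supermodularity of $p$ together with the identification $\mathcal{R}=Q(p,b)$ --- equivalently, the assertion that the set of root-count vectors of packings of $k$ spanning arborescences is a g-polymatroid with this explicit bounding function. Everything afterwards is a routine application of Theorem~\ref{gpm}, of the identity $\sum_{X\in\mathcal{P}}d^-_A(X)=e_A(\mathcal{P})$ for subpartitions, and of the monotonicity of $f,g$, whereas making the uncrossing argument for $p$ precise --- that is, establishing the submodularity of $e_A$ along $\sqcap$ and $\sqcup$ --- is where the properties \eqref{kvkjvk1}--\eqref{kvkjvk3} genuinely do the work.
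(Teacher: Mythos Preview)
Your proposal is correct, and it is essentially the approach the paper itself takes: the paper does not give a separate proof of this cited result, but your function $p$ is exactly the paper's $\hat p$ (Claim~\ref{psup}), your submodularity of $e_A$ along $\sqcap,\sqcup$ is the paper's Claim~\ref{submodeep}, and the conclusion via $g$-polymatroid intersection (Theorem~\ref{gpm}) is precisely the engine of the paper's proof of the common generalization Theorem~\ref{iycivsxcomgen}. The only cosmetic difference is that you package the root-count polytope directly as $Q(p,b)$ with $b(Z)=k-p(\overline Z)$ and intersect with $T(f,g)$, whereas the paper, working in greater generality, uses $Q(\hat p,\infty_0)\cap T(0,h)$ and enforces the total via a separate plank $K(\cdot,\cdot)$; for the present special case these coincide.
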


For $f(v)=|S_v|=g(v)$ for all $v\in V$ and $k=|S|$ for a multiset $S$ of vertices in $V$, Theorem \ref{frankarborescencesroots}  reduces to Theorem \ref{edmondsarborescencesmulti}. For $f(v)=0$ and $g(v)=k$ for all $v\in V$, Theorem \ref{frankarborescencesroots}  reduces to Theorem \ref{frankarborescences}.
\medskip

A special case of a theorem of B\'erczi and Frank \cite{BF3} provides the following extension of Theorem \ref{frankarborescencesroots}.

\begin{thm}[B\'erczi, Frank \cite{BF3}]\label{bobisuv}
Let $D=(V,A)$ be a digraph, $f,g: V\rightarrow \mathbb Z_+$ functions and $h,\alpha,\beta\in\mathbb Z_+$. 
There is an $h$-regular $(f,g)$-bounded $(\alpha,\beta)$-limited packing of arborescences in $D$  if and only if   
	\begin{eqnarray}  
		g_h(v)		& 	\ge	&	f(v)	\hskip .52truecm  \text{ for every } v \in V,		\label{fghu} \\ 
	\min\{\beta,g_h(V)\}	&	\ge 	& 	\alpha,									\label{iviytdtutuy}\\
	e_A({\cal P})+\min\{\beta-f(\overline{\cup\mathcal{P}}), g({\cup\mathcal{P}})\} 		&	\ge 	& 
	h|\mathcal{P}|	\hskip .52truecm \text{ for every subpartition } \mathcal{P} \text{ of } V.
	\end{eqnarray}
\end{thm}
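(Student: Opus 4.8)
The plan is to derive this from Theorem \ref{frankarborescencesroots} (the Frank--Cai theorem) by absorbing the extra parameters $h$, $\alpha$, $\beta$ into the functions $f,g$ and the number $k$ via the $g$-polymatroid machinery of Theorem \ref{gpm}. First I would observe that an $h$-regular $(f,g)$-bounded $(\alpha,\beta)$-limited packing of arborescences is, for a suitable choice of the number $k$ of arborescences, exactly a $(f',g')$-bounded packing of $k$ spanning arborescences for appropriately modified bounds. The key reduction is this: if ${\cal B}$ is a packing of arborescences with in-degree/root vector $x\in\mathbb{Z}_+^V$ counting how many arborescences are rooted at each vertex, then $h$-regularity forces $|{\cal B}| = h|V| - |A({\cal B})|$-type relations; more directly, $h$-regular means every vertex lies in exactly $h$ arborescences, so the number of arborescences is $k = h + (\text{number of non-root memberships})$, and a root vector $x$ is feasible iff it lies in the intersection of the box $T(f,g_h)$ (note the truncation $g_h$: a vertex can be the root of at most $h$ arborescences in an $h$-regular packing) with the knapsack constraint $K(\alpha,\beta)$ on $x(V)=|{\cal B}|$, and iff the arborescence-packing condition of Theorem \ref{frankarborescencesroots} holds with $k$ replaced by $x(V)$.

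The main steps, in order, would be: (1) Set $S=V$, $p\equiv 0$, $b\equiv\infty_0$ so that $Q(p,b)$ is trivial, and form $T(f,g_h)\cap K(\alpha,\beta)$; by Theorem \ref{gpm}(\ref{TQ}) and (\ref{QcapK}) this is a $g$-polymatroid, nonempty exactly when $g_h\ge f$ (giving \eqref{fghu}) and $\alpha\le\min\{\beta,g_h(V)\}$ (giving \eqref{iviytdtutuy}). (2) Show that $D$ has the desired packing iff there is an integral point $x$ in this $g$-polymatroid such that Theorem \ref{frankarborescencesroots}'s subpartition inequality holds with $k:=x(V)$; the forward direction reads off the root vector of the packing, and the backward direction applies Theorem \ref{frankarborescencesroots} with that value of $k$ and with the original $(f,g)$ (the vertex $(f,g)$-bounds are inherited from $x\in T(f,g_h)\subseteq T(f,g)$, since $g_h\le g$, while the precise number $x(V)$ of arborescences realizes the $(\alpha,\beta)$-limit and the count is consistent with $h$-regularity because each arborescence spans). (3) Rewrite ``$\exists$ integral $x\in T(f,g_h)\cap K(\alpha,\beta)$ with $e_A({\cal P})+\min\{x(V)-f(\overline{\cup{\cal P}}),g(\cup{\cal P})\}\ge h|{\cal P}|$ for all subpartitions ${\cal P}$'' as a single explicit condition on the data. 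Since $x(V)$ appears only through the term $x(V)-f(\overline{\cup{\cal P}})$ and we want it as large as possible, we should take $x(V)$ maximal, namely $x(V)=\min\{\beta,g_h(V)\}$; but one must be careful that the SAME $x$ must work for all ${\cal P}$ simultaneously — this is where a min-max / $g$-polymatroid intersection argument is needed rather than a pointwise choice. Here I would either invoke Theorem \ref{gpm}(\ref{QcapQ})/(\ref{sum}) to intersect the root-constraint $g$-polymatroid with the (supermodular-lower-bound) polyhedron coming from the subpartition inequalities, or argue directly that taking $x$ with $x(V)$ as large as possible suffices because the left side is monotone in $x(V)$; replacing $x(V)$ by $\beta$ and noting $g_h(V)\ge \beta$ is guaranteed by \eqref{iviytdtutuy} when the relevant inequality is active, one recovers $e_A({\cal P})+\min\{\beta-f(\overline{\cup{\cal P}}),g(\cup{\cal P})\}\ge h|{\cal P}|$.

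The hard part will be Step (3): justifying that a single root vector $x$ can be chosen to satisfy all the subpartition inequalities at once, and cleanly matching the truncation $g_h$ versus $g$ in the two roles that $g$ plays (bounding individual root multiplicities, where $g_h$ is correct, versus bounding $g(\cup{\cal P})$ in the covering inequality, where the untruncated $g$ appears because that term counts total non-root in-degree over a set, not per-vertex root counts). I expect the reconciliation of these two appearances of $g$, together with verifying that the maximal choice $x(V)=\min\{\beta,g_h(V)\}$ is without loss of generality, to require the bulk of the care; once that is done, the equivalence with Theorem \ref{frankarborescencesroots} and hence with Theorem \ref{frankarborescences}/\ref{edmondsarborescencesmulti} is routine. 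An alternative, possibly cleaner route I would keep in reserve is to mimic the ``adding a new root vertex $s$'' gadget: add a vertex $s$ with $g_h(v)$ parallel arcs to each $v$, forbid root status except at $s$, and translate $h$-regular $(\alpha,\beta)$-limited into an in-degree bound at $s$; then apply Theorem \ref{frankarborescencesroots} (or Edmonds) on the enlarged digraph and push the constraints back, but the $g$-polymatroid formulation above is more in the spirit of the paper's stated methods.
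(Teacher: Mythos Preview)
The paper does not give a self-contained proof of Theorem~\ref{bobisuv}; it is quoted from \cite{BF3}. Within the paper it is recovered as the digraph special case of Theorem~\ref{sibevhz2} (and hence of Theorem~\ref{bjhkcgckh}/\ref{iycivsxcomgen}), whose proof goes through the supermodular function $\hat p(Z)=\max\{h|\mathcal P|-e_{\mathcal A}(\mathcal P):\mathcal P\text{ subpartition of }Z\}$, the $g$-polymatroid intersection, and the ``new vertex $s$'' construction together with Corollary~\ref{jfxghvbjkmix}.

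Your main line of argument has a genuine gap. You propose to pick a root vector $x\in T(f,g_h)\cap K(\alpha,\beta)$ and then apply Theorem~\ref{frankarborescencesroots} with $k:=x(V)$. But Theorem~\ref{frankarborescencesroots} produces $k$ \emph{spanning} arborescences, whereas in an $h$-regular packing with $|\mathcal B|=x(V)>h$ the arborescences are \emph{not} all spanning (each vertex lies in exactly $h$ of them). Your parenthetical ``the count is consistent with $h$-regularity because each arborescence spans'' is simply false when $x(V)>h$. The mismatch is already visible in Step~(3): Theorem~\ref{frankarborescencesroots}'s right-hand side is $k|\mathcal P|=x(V)|\mathcal P|$, yet you silently replace it by $h|\mathcal P|$. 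No choice of a single $x$ will reconcile this, because the two occurrences of $k$ in Theorem~\ref{frankarborescencesroots} decouple in Theorem~\ref{bobisuv} into $\beta$ (inside the $\min$) and $h$ (on the right-hand side).

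The route you list as a ``reserve'' option is in fact the one that works and is what the paper does (for the more general Theorem~\ref{iycivsxcomgen}): add a new vertex $s$, send $m(v)$ parallel arcs from $s$ to each $v$ for a suitable $m\in T(f,g_h)\cap K(\max\{h,\alpha\},\beta)$, and ask for a packing of exactly $h$ spanning $s$-arborescences each containing a prescribed $s$-arc (Corollary~\ref{jfxghvbjkmix}); deleting $s$ then yields the $h$-regular packing, with the multiple $s$-arcs in a single $s$-arborescence splitting it into several (non-spanning) arborescences. The existence of such an $m$ is governed not by Theorem~\ref{frankarborescencesroots}'s inequality but by $m(Z)\ge \hat p(Z)$, and it is the supermodularity of $\hat p$ (Claim~\ref{psup}) that lets the $g$-polymatroid machinery close the argument. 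If you want to salvage your plan, switch to this reduction and replace the call to Theorem~\ref{frankarborescencesroots} by Edmonds/Corollary~\ref{jfxghvbjkmix} with $k=h$.
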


For $h=\alpha=\beta=k,$ Theorem \ref{bobisuv} reduces to Theorem \ref{frankarborescencesroots}. 
\medskip

Theorems \ref{frankarborescences} and \ref{frankarborescencesroots} were extended to mixed graphs as follows.

\begin{thm}[Frank \cite{FA78}]\label{gaoyang2}
Let $F=(V,E\cup A)$ be a mixed graph and $k\in \mathbb{Z}_+.$
There exists a packing of $k$  spanning mixed  arborescences in  $F$ if and only if 
\begin{eqnarray} \label{condmixedflex}
	e_{E\cup A}({\cal P}) &\geq& k(|{\cal P}|-1) \hskip .44truecm \text { for every subpartition } {\cal P} \text { of } V.
\end{eqnarray}
\end{thm}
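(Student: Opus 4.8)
The plan is to reduce the mixed-graph packing question to Theorem \ref{frankarborescences} for ordinary digraphs, by guessing a good orientation of the hyperedge set $E$ and showing that the subpartition condition transfers. Necessity is the easy direction: if $F$ has a packing of $k$ spanning mixed arborescences, fix an orientation of $F$ realizing it; then for each subpartition ${\cal P}$ of $V$, each of the $k$ arborescences enters all but at most one member of ${\cal P}$ (an arborescence, being weakly connected and spanning, can have in-degree $0$ on at most one set of a subpartition — the one containing its root), so the $k$ arborescences together use at least $k(|{\cal P}|-1)$ distinct arcs entering members of ${\cal P}$; since an oriented hyperedge entering a member of ${\cal P}$ came from a hyperedge of $E$ entering that member, and arcs of $A$ entering members of ${\cal P}$ are counted in $e_{E\cup A}({\cal P})$, we get $e_{E\cup A}({\cal P})\ge k(|{\cal P}|-1)$.

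For sufficiency, the natural approach is to find an orientation $D=(V,A\cup \vec{E})$ of $F$ such that the digraph condition of Theorem \ref{frankarborescences} holds, namely $e_{A\cup\vec E}({\cal P})+k\ge k|{\cal P}|$ for every subpartition ${\cal P}$; then Theorem \ref{frankarborescences} yields a packing of $k$ spanning arborescences in $D$, which is exactly a packing of $k$ spanning mixed arborescences in $F$. The key point is that $e_{A\cup\vec E}({\cal P})\le e_{E\cup A}({\cal P})$ can fail to be an equality — orienting a hyperedge $Z$ entering several members of ${\cal P}$ makes it enter at most one member — so a careless orientation loses too much. First I would reformulate "there exists a good orientation" as a matroid or submodular-flow feasibility statement: we must assign to each hyperedge $Z\in E$ a head $z\in Z$ so that the resulting in-degree contributions keep every subpartition inequality satisfied. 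This is most cleanly handled by an orientation theorem: $F$ has an orientation in which $d^-_{A\cup\vec E}(X)\ge$ (some supermodular demand) for all $X$, or equivalently by induction on $|E|$, orienting one hyperedge at a time while maintaining the invariant.

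The cleanest route, and the one I expect the authors take, is induction on the number of hyperedges, using a splitting/uncrossing argument: pick a hyperedge $Z\in E$, and show that some choice of head $z\in Z$ (replacing $Z$ by the dyperedge $(Z-z)z$, i.e. partially orienting) preserves the validity of the inequality $e_{E\cup A}({\cal P})\ge k(|{\cal P}|-1)$ for all subpartitions ${\cal P}$ of the new mixed graph. To do this one considers the family of "tight" or "dangerous" subpartitions — those for which the inequality holds with equality, or would be violated after a bad orientation — and uses submodularity of $e_{E\cup A}(\cdot)$ together with the uncrossing operations on subpartitions recorded in \eqref{kvkjvk1}–\eqref{kvkjvk3} to show these dangerous subpartitions cannot block every head choice simultaneously; intuitively, if choosing head $z_1$ is blocked by a tight subpartition ${\cal P}_1$ and choosing head $z_2$ is blocked by ${\cal P}_2$, then ${\cal P}_1\sqcap{\cal P}_2$ and ${\cal P}_1\sqcup{\cal P}_2$ produce a contradiction with the count $k(|{\cal P}|-1)$ because $Z$ meets the complements differently. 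The main obstacle is precisely this uncrossing step for subpartitions rather than single sets — the bookkeeping of how many members of ${\cal P}_1\sqcap{\cal P}_2$ and ${\cal P}_1\sqcup{\cal P}_2$ a given hyperedge enters, and matching it against \eqref{kvkjvk3}, is where all the real work lies; once a single hyperedge can always be safely oriented, the induction closes immediately and the base case (no hyperedges) is Theorem \ref{frankarborescences} itself.
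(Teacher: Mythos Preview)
The paper does not give its own proof of Theorem~\ref{gaoyang2}: the theorem is stated in the background Section~\ref{Packingproblems} as a known result of Frank~\cite{FA78} and is never proved directly. So there is no ``paper's proof'' to compare against, beyond the remark that the $E=\emptyset$ case reduces to Theorem~\ref{frankarborescences}.

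That said, your plan is a correct and standard route to the result. The necessity argument is fine (note, for completeness, that an arc can enter at most one member of a subpartition, so the $k(|\mathcal P|-1)$ arcs you count really are distinct and each corresponds to an element of $E\cup A$ entering some member of~$\mathcal P$). For sufficiency, reducing to Theorem~\ref{frankarborescences} via an orientation found by induction on $|E|$ with an uncrossing argument on tight subpartitions is exactly the classical approach; the submodularity you need is precisely the content of Claim~\ref{submodeep}(c), which the paper states and proves for the more general mixed-hypergraph function $e_{\mathcal E\cup\mathcal A}$ on subpartitions.

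If one insists on extracting a proof of Theorem~\ref{gaoyang2} from this paper, the route would be entirely different from yours: it is a special case of the chain Theorem~\ref{iycivsxcomgen} $\Rightarrow$ Theorem~\ref{bjhkcgckh} $\Rightarrow$ Theorem~\ref{sibevhz2} $\Rightarrow$ Theorem~\ref{hsz} $\Rightarrow$ Theorem~\ref{gaoyang3} $\Rightarrow$ Theorem~\ref{gaoyang2}, so it would go through the generalized-polymatroid machinery (intersecting and summing $g$-polymatroids, then invoking Corollary~\ref{jfxghvbjkmix}) rather than through a hands-on orientation/uncrossing argument. Your direct approach is more elementary and self-contained; the paper's machinery is heavier but yields the far more general Theorem~\ref{iycivsxcomgen} in one stroke.
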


If $E=\emptyset$, then Theorem \ref{gaoyang2} reduces to Theorem \ref{frankarborescences}.

\begin{thm}[Gao,Yang \cite{gy2}]\label{gaoyang3} 
Let $F=(V,E\cup A)$ be a mixed graph, $f,g: V\rightarrow \mathbb Z_+$ functions, and $k\in \mathbb{Z}_+.$
There exists an $(f,g)$-bounded  packing of $k$  spanning mixed arborescences in $F$ if and only if \eqref{fg} is satisfied and  \begin{eqnarray}\label{jvljh1}
	e_{E\cup A}({\cal P})+\min\{k-f(\overline{\cup\mathcal{P}}),g(\cup\mathcal{P})\}		&	\ge 	& 	k|\mathcal{P}|
	\hskip .52truecm \text{ for every subpartition } \mathcal{P} \text{ of } V.
\end{eqnarray}
\end{thm}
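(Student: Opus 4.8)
The plan is to prove Theorem~\ref{gaoyang3} by reducing it to the purely directed case, Theorem~\ref{frankarborescencesroots}, via a suitable orientation of the undirected hyperedges (here, simple edges). The necessity of \eqref{fg} and \eqref{jvljh1} is routine: given an $(f,g)$-bounded packing of $k$ spanning mixed arborescences, fix an orientation realizing it; each of the $k$ arborescences must enter every member of a subpartition $\mathcal P$ except the one (if any) containing its root, and at most $f(\overline{\cup\mathcal P})$ roots can lie outside $\cup\mathcal P$ while at most $g(\cup\mathcal P)$ arcs/edges of the packing can have their heads (after orientation) inside $\cup\mathcal P$; counting entering hyperedges and dyperedges of $F$ gives \eqref{jvljh1}, and \eqref{fg} is immediate from the definition of $(f,g)$-bounded.

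For sufficiency, the key step is to show that if \eqref{fg} and \eqref{jvljh1} hold, then $F$ admits an orientation $D=(V,A')$ of its edge set $E$ such that $D$ together with $A$ satisfies the hypothesis \eqref{fghu}-type condition of Theorem~\ref{frankarborescencesroots}; that is, $e_{A'\cup A}(\mathcal P)+\min\{k-f(\overline{\cup\mathcal P}),g(\cup\mathcal P)\}\ge k|\mathcal P|$ for every subpartition $\mathcal P$. Note that for a subpartition $\mathcal P$ the quantity $e_{E\cup A}(\mathcal P)$ counts each hyperedge/dyperedge of $F$ entering at least one member of $\mathcal P$ once, whereas after orientation a single (oriented) edge may enter several members of $\mathcal P$; so the natural approach is to find an orientation that is ``$\mathcal P$-efficient'' simultaneously for all subpartitions. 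The standard tool for this is a result on orientations covering an intersecting (or crossing) supermodular in-degree requirement --- one defines, for each subset $X\subseteq V$, a demand $m(X)$ built from the right-hand deficiency $k|\mathcal P|-e_A(\mathcal P)-\min\{\dots\}$ and shows, using submodularity/supermodularity of the $\min$ term together with the uncrossing machinery recorded in \eqref{kvkjvk1}--\eqref{kvkjvk3}, that the edge set $E$ can be oriented so that $d^-_{A'}(X)+d^-_A(X)$ meets the demand on the relevant tight sets. Then Theorem~\ref{frankarborescencesroots} applied to $D=(V,A'\cup A)$ yields the desired $(f,g)$-bounded packing of $k$ spanning arborescences in $D$, which is by construction an $(f,g)$-bounded packing of $k$ spanning mixed arborescences in $F$.

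Alternatively, and perhaps more cleanly, one can bypass an explicit orientation lemma by working directly with a submodular flow / $g$-polymatroid formulation: encode the in-degree vector $x(v)=d^-_{A'}(v)$ of a hypothetical orientation of $E$ as lying in the base polytope of the ``orientation'' submodular function $b_E(X)=$ (number of hyperedges of $E$ entering $X$) shifted by the contribution of $A$, intersect it with the box and cardinality constraints coming from $f,g,k$ via Theorem~\ref{gpm}, and use the non-emptiness criteria there (parts~\ref{QcapT}, \ref{QcapK}, \ref{QcapQ}) to certify that a valid orientation exists precisely when \eqref{fg} and \eqref{jvljh1} hold. The arithmetic translating \eqref{jvljh1} into the g-polymatroid intersection conditions is where the subpartition-versus-single-set bookkeeping has to be done carefully.

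The main obstacle I expect is exactly this bookkeeping: \eqref{jvljh1} is phrased in terms of subpartitions and the function $e_{E\cup A}(\mathcal P)$, which is not a set function but a ``partition function'', so one must pass correctly between the partition-type inequalities and the set-type (intersecting-supermodular) inequalities that the orientation / submodular-flow theorems consume. Handling the $\min\{k-f(\overline{\cup\mathcal P}),g(\cup\mathcal P)\}$ term under uncrossing --- in particular verifying that when two subpartitions are replaced by $\mathcal P_1\sqcap\mathcal P_2$ and $\mathcal P_1\sqcup\mathcal P_2$ the sum of the $\min$ terms does not increase --- is the delicate point, and it is precisely where properties \eqref{kvkjvk1}--\eqref{kvkjvk3} are needed. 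Once that submodularity-type statement is in place, the rest follows by invoking Theorem~\ref{frankarborescencesroots} (or the orientation theorem) as a black box.
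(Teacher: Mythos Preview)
The paper does not give its own proof of Theorem~\ref{gaoyang3}; it is quoted as a result of Gao--Yang~\cite{gy2}. Within the paper it is recovered only as a special case of the more general theorems that \emph{are} proved: it is the mixed-graph case of Theorem~\ref{hsz}, the case $h=\alpha=\beta=k$ of Theorem~\ref{sibevhz2}, and (with ${\sf M}$ the free matroid) a specialisation of Theorem~\ref{bjhkcgckh}. Those general statements are obtained from Theorem~\ref{iycivsxcomgen}, whose proof is a pure generalized-polymatroid argument: one shows that $\hat p(Z)=\max\{h|\mathcal P|-e_{\mathcal E\cup\mathcal A}(\mathcal P):\mathcal P\text{ subpartition of }Z\}$ is supermodular (Claim~\ref{psup}, via the uncrossing inequality~\eqref{submodeepeq} of Claim~\ref{submodeep}), builds the $g$-polymatroids $Q_1,\dots,Q_4$, and checks non-emptiness of $Q_4$.

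Your second alternative is thus exactly the route the paper takes for its general result, and the ``delicate point'' you isolate --- the behaviour of $e_{E\cup A}$ under $\sqcap$ and $\sqcup$ --- is precisely Claim~\ref{submodeep}. Your first alternative (orient $E$ and invoke Theorem~\ref{frankarborescencesroots}) is a genuinely different approach, closer in spirit to the original Gao--Yang argument and to the orientation theorem of Gao~\cite{gao} used for Theorem~\ref{jegdvvlmcblkcbkuhypermix}; it is a legitimate strategy, but what you have written is a plan rather than a proof: you never name the orientation theorem you intend to apply nor verify its supermodularity hypothesis, and your necessity sketch contains a slip (the term $g(\cup\mathcal P)$ bounds the number of \emph{roots} lying in $\cup\mathcal P$, not the number of arcs with head there).
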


If $E=\emptyset$, then Theorem \ref{gaoyang3} reduces to Theorem \ref{frankarborescencesroots}. For $f(v)=0$ and $g(v)=k$ for every $v\in V,$ Theorem \ref{gaoyang3} reduces to Theorem \ref{gaoyang2}.

\subsection{Packing hyperarborescences and mixed hyperarborescences}

Theorem \ref{edmondsarborescencesmulti} was  extended for dypergraphs by Frank, Kir\'aly, and Kir\'aly \cite{fkiki}.

\begin{thm}[Frank, Kir\'aly, Kir\'aly \cite{fkiki}]\label{hyperarborescencesmulti} 
Let $\mathcal{D}=(V,\mathcal{A})$ be a dypergraph and $S=\{s_1,\dots,s_k\}$ a multiset of vertices in $V.$
There exists a packing of $k$ spanning hyperarborescences  with roots $s_1,\dots,s_k$ in $\mathcal{D}$ if and only if 
	\begin{eqnarray} \label{fkkcondmulti} 
		|S_X|+	 d^-_\mathcal{A}(X) &\geq & k \hskip .5truecm \text{ for every  non-empty  $X\subseteq V.$}
	\end{eqnarray}
\end{thm}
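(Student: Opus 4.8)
The plan is to treat the two directions separately; necessity is routine, and sufficiency reduces to Edmonds' theorem (Theorem~\ref{edmondsarborescencesmulti}) once one produces a suitable \emph{trimming}. For necessity, suppose $\mathcal{D}$ has a packing of $k$ spanning hyperarborescences $\mathcal{F}_1,\dots,\mathcal{F}_k$ with roots $s_1,\dots,s_k$, and trim each $\mathcal{F}_i$ to a spanning $s_i$-arborescence $T_i$. For a non-empty $X\subseteq V$, a trimmed arc of $T_i$ entering $X$ comes from a dyperedge of $\mathcal{F}_i$ entering $X$, so $d^-_{\mathcal{F}_i}(X)\ge d^-_{T_i}(X)\ge 1-[s_i\in X]$; since the $\mathcal{F}_i$ are dyperedge-disjoint sub-dypergraphs of $\mathcal{D}$, summing over $i$ gives $d^-_\mathcal{A}(X)\ge\sum_i d^-_{\mathcal{F}_i}(X)\ge k-|S_X|$, which is the required inequality.

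For sufficiency, it is enough to produce a trimming $D=(V,A)$ of $\mathcal{D}$ (trimming every dyperedge) with $|S_X|+d^-_A(X)\ge k$ for every non-empty $X$, because then Theorem~\ref{edmondsarborescencesmulti} applied to $D$ yields a packing of $k$ spanning arborescences with roots $s_1,\dots,s_k$ in $D$, hence of hyperarborescences in $\mathcal{D}$. I would prove that such a trimming exists by induction on the ``hyper-excess'' $\mu(\mathcal{D})=\sum_{Zz\in\mathcal{A}}(|Z|-1)$. If $\mu(\mathcal{D})=0$ then $\mathcal{D}$ is a digraph and the hypothesis is exactly Edmonds' condition. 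If $\mu(\mathcal{D})\ge 1$, pick a dyperedge $e_0=Z_0z_0$ with $|Z_0|\ge 2$ and replace it by $(Z_0-y)z_0$ for a suitable $y\in Z_0$; the new dypergraph $\mathcal{D}'$ has $\mu(\mathcal{D}')=\mu(\mathcal{D})-1$ and, by the choice of $y$, still satisfies the hypothesis, so by induction $\mathcal{D}'$ has a good trimming $D$. Since $(Z_0-y)z_0$ is trimmed in $D$ to an arc $y'z_0$ with $y'\in Z_0-y\subseteq Z_0$, the same $D$ is a good trimming of $\mathcal{D}$, completing the induction.

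The heart of the argument is choosing $y$ so that the hypothesis survives the replacement. Call $X$ \emph{tight} if $|S_X|+d^-_\mathcal{A}(X)=k$. A direct check shows that replacing $e_0$ by $(Z_0-y)z_0$ lowers $d^-_\mathcal{A}(X)$ (by exactly $1$) precisely for the sets $X$ with $z_0\in X$, $Z_0-y\subseteq X$, $y\notin X$ and leaves it unchanged otherwise; hence $y$ is a valid choice unless some tight set $X$ satisfies these three conditions. Suppose for contradiction that every $y\in Z_0$ fails, witnessed by a tight set $X_y$ with $z_0\in X_y$, $Z_0-y\subseteq X_y$, $y\notin X_y$, and fix distinct $y_1,y_2\in Z_0$ (possible since $|Z_0|\ge 2$). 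Then $X_{y_1}\cap X_{y_2}\ni z_0$ is non-empty and $Z_0\subseteq X_{y_1}\cup X_{y_2}$ (because $Z_0-y_1\subseteq X_{y_1}$ and $y_1\in Z_0-y_2\subseteq X_{y_2}$), so $e_0$ enters $X_{y_1}$, $X_{y_2}$ and $X_{y_1}\cap X_{y_2}$ but not $X_{y_1}\cup X_{y_2}$. As the in-degree function of a dypergraph is submodular and $e_0$ already contributes $1$ to the submodular slack of the pair $X_{y_1},X_{y_2}$ while every other dyperedge contributes a non-negative amount, we get $d^-_\mathcal{A}(X_{y_1})+d^-_\mathcal{A}(X_{y_2})\ge d^-_\mathcal{A}(X_{y_1}\cap X_{y_2})+d^-_\mathcal{A}(X_{y_1}\cup X_{y_2})+1$; adding the modular identity for $X\mapsto|S_X|$, and using tightness of $X_{y_1},X_{y_2}$ together with the hypothesis applied to the non-empty sets $X_{y_1}\cap X_{y_2}$ and $X_{y_1}\cup X_{y_2}$, this forces $2k\ge 2k+1$, a contradiction. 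Hence a good $y$ exists.

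I expect the part requiring the most care to be the \emph{design} of the reduction step rather than its verification: trimming a dyperedge all the way down to an arc can genuinely destroy the hypothesis, and the right remedy is to shave off tail vertices one at a time and to measure progress by $\mu(\mathcal{D})$; once this set-up is in place, the existence of the step is the short uncrossing argument above. The only other technical point is the submodularity of $d^-_\mathcal{A}$ on dypergraphs, which I would establish by a routine case analysis of a single dyperedge's contribution to the inequality $d^-_\mathcal{A}(X)+d^-_\mathcal{A}(Y)\ge d^-_\mathcal{A}(X\cap Y)+d^-_\mathcal{A}(X\cup Y)$.
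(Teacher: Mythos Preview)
The paper does not give its own proof of this theorem; it is quoted from \cite{fkiki} as background in Section~\ref{Packingproblems} and used later only as a special case of stronger results. So there is no in-paper proof to compare against.

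Your argument is correct and is essentially the standard proof of the Frank--Kir\'aly--Kir\'aly theorem: reduce to Edmonds' arborescence theorem by showing that one can peel off one tail vertex from a large dyperedge while preserving the condition, and certify the existence of such a vertex by an uncrossing of two tight witnesses. The key computation---that $e_0$ enters $X_{y_1}$, $X_{y_2}$ and $X_{y_1}\cap X_{y_2}$ but not $X_{y_1}\cup X_{y_2}$, yielding the extra $+1$ in the submodular inequality---is exactly right, and the per-dyperedge case analysis for submodularity of $d^-_\mathcal{A}$ is indeed routine. One cosmetic remark: in the necessity paragraph you could equally well argue directly in $\mathcal{D}$ (a spanning $s_i$-hyperarborescence has a dyperedge entering every $X$ not containing $s_i$), which avoids invoking the trimming in that direction; but what you wrote is fine.
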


If $\mathcal{D}$ is a digraph, then Theorem \ref{hyperarborescencesmulti} reduces to Theorem \ref{edmondsarborescencesmulti}.
\medskip

H\"orsch and Szigeti \cite{HSz5} extended Theorem \ref{gaoyang3} to mixed hypergraphs.

\begin{thm}[H\"orsch, Szigeti \cite{HSz5}]\label{hsz} 
Let $\mathcal{F}=(V,\mathcal{E}\cup \mathcal{A})$ be a mixed hypergraph, $f,g: V\rightarrow \mathbb Z_+$ functions, and $k\in \mathbb{Z}_+.$
There exists an $(f,g)$-bounded  packing of $k$  spanning mixed hyperarborescences in $\mathcal{F}$ if and only if \eqref{fg} is satisfied and  \begin{eqnarray}\label{jvljh1hyp}
	e_{\mathcal{E}\cup \mathcal{A}}({\cal P})+\min\{k-f(\overline{\cup\mathcal{P}}),g(\cup\mathcal{P})\}		&	\ge 	& 	
	k|\mathcal{P}|\hskip .52truecm \text{ for every subpartition } \mathcal{P} \text{ of } V.
\end{eqnarray}
\end{thm}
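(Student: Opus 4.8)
\medskip
\noindent\emph{Proof strategy.} Necessity is routine. If $\mathcal{F}$ has an $(f,g)$-bounded packing of $k$ spanning mixed hyperarborescences, then by definition $\mathcal{F}$ can be oriented and the resulting dypergraph trimmed to a digraph $D=(V,A)$ that carries $k$ arc-disjoint spanning arborescences $B_1,\dots,B_k$ whose root multiset $R$ satisfies $f(v)\le |R_v|\le g(v)$ for every $v\in V$. Fix a subpartition $\mathcal{P}$ of $V$. For each $i$ and each member $P\in\mathcal{P}$ not containing the root of $B_i$, exactly one arc of $B_i$ enters $P$, and distinct members are entered by distinct arcs of $B_i$; summing over $i$ and using arc-disjointness, $e_A(\mathcal{P})\ge k|\mathcal{P}|-|R_{\cup\mathcal{P}}|$. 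Every arc of $D$ entering a member of $\mathcal{P}$ arises, via orienting and trimming, from a distinct hyperedge or dyperedge of $\mathcal{F}$ that also enters that member, hence $e_A(\mathcal{P})\le e_{\mathcal{E}\cup\mathcal{A}}(\mathcal{P})$. Since $|R_{\cup\mathcal{P}}|\le g(\cup\mathcal{P})$ and $|R_{\cup\mathcal{P}}|=k-|R_{\overline{\cup\mathcal{P}}}|\le k-f(\overline{\cup\mathcal{P}})$, combining these inequalities gives \eqref{jvljh1hyp}, while \eqref{fg} is immediate.

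For sufficiency the plan is to reduce in two steps to Theorem~\ref{hyperarborescencesmulti}. \emph{Step 1 (choosing the roots).} I claim \eqref{fg} and \eqref{jvljh1hyp} force the existence of an integral $m\colon V\to\mathbb{Z}_+$ with $f\le m\le g$, $m(V)=k$, and $m(\cup\mathcal{P})\ge k|\mathcal{P}|-e_{\mathcal{E}\cup\mathcal{A}}(\mathcal{P})$ for every subpartition $\mathcal{P}$ of $V$. Define the set function $p$ on $V$ by
\[
p(Z)=\max\bigl\{\,k|\mathcal{P}|-e_{\mathcal{E}\cup\mathcal{A}}(\mathcal{P})\ :\ \mathcal{P}\text{ a subpartition of }V,\ \cup\mathcal{P}\subseteq Z\,\bigr\},
\]
so $p(\emptyset)=0$ and $p\ge 0$; since each $m$ is nonnegative, ``$m(Z)\ge p(Z)$ for all $Z$'' is exactly the displayed family of constraints. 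Granting that $p$ is supermodular, \eqref{jvljh1hyp} gives $p\le g$ and \eqref{fg} gives $f\le g$, so by part~\ref{QcapT} of Theorem~\ref{gpm} the set $Q(p,g)\cap T(f,g)$ is a nonempty g-polymatroid $Q(p',g')$; moreover $g'(V)=g(V)\ge k$ (by \eqref{jvljh1hyp} with $\mathcal{P}=\{V\}$) and $p'(V)=\max_{X\subseteq V}\{p(X)+f(\overline X)\}\le k$ (again by \eqref{jvljh1hyp}, since $\overline X\subseteq\overline{\cup\mathcal{P}}$ for the subpartition achieving $p(X)$). Hence part~\ref{QcapK} of Theorem~\ref{gpm} applies with $(\alpha,\beta)=(k,k)$: the intersection with $K(k,k)$ is a nonempty g-polymatroid, and by integrality it contains an integral point, the desired $m$.

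\emph{Step 2 (the fixed-root packing).} Let $S$ be a multiset of vertices with $|S_v|=m(v)$, so $|S|=k$. Orienting a hyperedge of $\mathcal{E}$ raises by $1$ the in-degree of exactly one of its vertices, and the in-degree increments realizable by orientations of $\mathcal{E}$ are precisely the integral $x\ge 0$ with $x(V)=|\mathcal{E}|$ and $x(Y)\le |\{Z\in\mathcal{E}:Z\cap Y\neq\emptyset\}|$ for all $Y\subseteq V$. We need such an $x$ with $d^-_{\mathcal{A}}(X)+x(X)\ge k-m(X)$ for every non-empty $X$; this is again a g-polymatroid feasibility problem (a Nash--Williams--Frank-type hypergraph orientation statement) whose solvability, after truncating the supermodular lower requirement over subpartitions, reduces to $e_{\mathcal{E}\cup\mathcal{A}}(\mathcal{P})\ge k|\mathcal{P}|-m(\cup\mathcal{P})$ for all subpartitions $\mathcal{P}$ --- which holds by Step~1. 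Orienting $\mathcal{E}$ along such an $x$ produces a dypergraph $\mathcal{D}$ with $|S_X|+d^-_{\mathcal{D}}(X)\ge k$ for every non-empty $X\subseteq V$, so Theorem~\ref{hyperarborescencesmulti} yields a packing of $k$ spanning hyperarborescences in $\mathcal{D}$ with root multiset $S$; this is the required $(f,g)$-bounded packing of mixed hyperarborescences in $\mathcal{F}$.

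The crux, which I expect to be the main obstacle, is the submodular behaviour of $e_{\mathcal{E}\cup\mathcal{A}}$ under the uncrossing operations, concretely that
\[
e_{\mathcal{E}\cup\mathcal{A}}(\mathcal{P}_1\sqcap\mathcal{P}_2)+e_{\mathcal{E}\cup\mathcal{A}}(\mathcal{P}_1\sqcup\mathcal{P}_2)\ \le\ e_{\mathcal{E}\cup\mathcal{A}}(\mathcal{P}_1)+e_{\mathcal{E}\cup\mathcal{A}}(\mathcal{P}_2)
\]
for all subpartitions $\mathcal{P}_1,\mathcal{P}_2$ of $V$; together with \eqref{kvkjvk3} this yields the supermodularity of $p$ (and, analogously, the reduction in Step~2), exactly in the spirit of the proof of part~\ref{QcapT} of Theorem~\ref{gpm}. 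One proves the displayed inequality edge by edge: for a fixed hyperedge or dyperedge $e$, \eqref{kvkjvk1} and \eqref{kvkjvk2} show that a member of $\mathcal{P}_1\sqcap\mathcal{P}_2$ entered by $e$ contains some $U_1\cap U_2$ with $U_i\in\mathcal{P}_i$ while $e$ is not entirely inside $U_1\cap U_2$, so $e$ enters a member of $\mathcal{P}_1$ or of $\mathcal{P}_2$, and symmetrically for $\mathcal{P}_1\sqcup\mathcal{P}_2$; a short case analysis when $e$ enters a member of exactly one of $\mathcal{P}_1,\mathcal{P}_2$ shows it cannot enter a member of both $\mathcal{P}_1\sqcap\mathcal{P}_2$ and $\mathcal{P}_1\sqcup\mathcal{P}_2$, which gives the inequality termwise. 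With this combinatorial lemma in place, the remainder is routine g-polymatroid bookkeeping via Theorem~\ref{gpm} together with Theorem~\ref{hyperarborescencesmulti}.
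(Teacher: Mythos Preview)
Your necessity argument and Step~1 are sound and closely parallel the paper's approach: the paper also isolates the supermodularity of the function $\hat p(Z)=\max\{h|\mathcal{P}|-e_{\mathcal{E}\cup\mathcal{A}}(\mathcal{P}):\mathcal{P}\text{ a subpartition of }Z\}$ (your $p$ with $h=k$) as the key technical point, proving it via exactly the submodular inequality $e_{\mathcal{E}\cup\mathcal{A}}(\mathcal{P}_1)+e_{\mathcal{E}\cup\mathcal{A}}(\mathcal{P}_2)\ge e_{\mathcal{E}\cup\mathcal{A}}(\mathcal{P}_1\sqcap\mathcal{P}_2)+e_{\mathcal{E}\cup\mathcal{A}}(\mathcal{P}_1\sqcup\mathcal{P}_2)$ that you identify (this is Claim~\ref{submodeep} and Claim~\ref{psup} in the paper). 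One notational quibble: $Q(p,g)$ is not a priori a g-polymatroid since $(p,g)$ need not be paramodular; use $Q(p,\infty_0)$ instead, as the paper does, and then intersect with $T(f,g)$.

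Step~2, however, has a genuine gap. The vector $x$ you describe records, for each vertex $v$, the number of hyperedges assigned head $v$; thus $x(X)=\sum_{v\in X}x(v)$ counts hyperedges with head in $X$, \emph{including those entirely contained in $X$}. But $d^-_{\vec{\mathcal{E}}}(X)$ only counts oriented hyperedges with head in $X$ \emph{and} a tail outside $X$, so $d^-_{\vec{\mathcal{E}}}(X)=x(X)-|\{Z\in\mathcal{E}:Z\subseteq X\}|\le x(X)$. Consequently the inequality $d^-_{\mathcal{A}}(X)+x(X)\ge k-m(X)$ does \emph{not} imply $|S_X|+d^-_{\mathcal{D}}(X)\ge k$, and the appeal to Theorem~\ref{hyperarborescencesmulti} is unjustified. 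The head-assignment polytope is simply too coarse to control in-degrees of sets.

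The fix is short, and it is essentially what the paper does. Once Step~1 produces $m$ with $m(\cup\mathcal{P})+e_{\mathcal{E}\cup\mathcal{A}}(\mathcal{P})\ge k|\mathcal{P}|$ for every subpartition $\mathcal{P}$, apply Theorem~\ref{iuviumixhyp} (Fortier et al.) directly to $\mathcal{F}$ with the family $\mathcal{S}$ consisting of the $k$ singletons $\{s_1\},\dots,\{s_k\}$ where $|S_v|=m(v)$: then $\sum_{X\in\mathcal{P}}|\mathcal{S}_X|=m(\cup\mathcal{P})$, so the hypothesis of Theorem~\ref{iuviumixhyp} is exactly your Step~1 condition, and you obtain the packing of spanning mixed hyperarborescences with roots $s_1,\dots,s_k$ immediately, without any separate orientation step. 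The paper's proof of the more general Theorem~\ref{iycivsxcomgen} (of which Theorem~\ref{hsz} is a special case) follows this route, using Corollary~\ref{jfxghvbjkmix} --- itself derived from Theorem~\ref{iuviumixhyp} --- after adding an auxiliary root vertex $s$.
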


If $\mathcal{F}$ is a mixed graph, then Theorem \ref{hsz} reduces to Theorem \ref{gaoyang3}. If $\mathcal{E}=\emptyset$, $k=|S|,$ and $f(v)=g(v)=|S_v|$ for all $v\in V,$ then Theorem \ref{hsz} reduces to Theorem \ref{hyperarborescencesmulti}.
\medskip

An extension with a matroid constraint of a common generalization of Theorems \ref{frankarborescencesroots} and \ref{hyperarborescencesmulti} was given in Szigeti \cite{szigrooted}.

\begin{thm}[Szigeti \cite{szigrooted}]\label{jegdvvlmcblkcbkuhyper}
Let $\mathcal{D}=(V,\mathcal{A})$ be a dypergraph, {$h$} $\in \mathbb{Z}_+$, $f,g\in \mathbb Z^V_+$,  {$S$} a multiset of vertices in $V$, and {${\sf M}$} $=(S,r_{\sf M})$ a matroid. There exists an  $h$-regular ${\sf M}$-basis-rooted $(f,g)$-bounded  packing of hyperarborescences in $\mathcal{D}$ if and only if  \eqref{fghu} holds and  for all $X,Z\subseteq V$ and subpartition $\mathcal{P}$ of $Z,$
\begin{eqnarray}
	r_{\sf M}(S_X)+g_h(\overline X)							&	\ge	& 	r_{\sf M}(S),\label{bsjvuhcyqcuq2}\\
	e_{\mathcal{A}}(\mathcal{P})+r_{\sf M}(S_X)-f(X-Z)+g_h(Z-X)	&	\ge	& 	h|\mathcal{P}|.
\end{eqnarray}
\end{thm}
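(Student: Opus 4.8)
The plan is to disentangle the two degrees of freedom in such a packing -- the choice of which vertices carry how many roots (constrained by ${\sf M}$, by $(f,g)$ and by $h$-regularity), and, once the root multiplicities are fixed, the construction of the hyperarborescences (constrained by $\mathcal{A}$) -- and to glue the two halves with generalized polymatroids. Call $\rho\in\mathbb{Z}_+^V$ an \emph{${\sf M}$-basic profile} if $\rho(v)=|B\cap S_v|$ for some basis $B$ of ${\sf M}$. A standard description of the base polytope of the polymatroid on $V$ whose rank function is $Z\mapsto r_{\sf M}(S_Z)$ (equivalently, matroid intersection of ${\sf M}$ with a partition matroid) shows that $\rho$ is an ${\sf M}$-basic profile iff $\rho(V)=r_{\sf M}(S)$ and $\rho(X)\le r_{\sf M}(S_X)$ for all $X\subseteq V$. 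For necessity: given a trimming carrying a packing $\mathcal{B}$, let $\rho(v)$ be the number of members rooted at $v$; since each lies in exactly $h$ members of $\mathcal{B}$ we get $\rho\le g_h$, hence $\rho\in T(f,g_h)$ and \eqref{fghu}; the root multiset being a basis gives that $\rho$ is an ${\sf M}$-basic profile, and $r_{\sf M}(S)=\rho(X)+\rho(\overline X)\le r_{\sf M}(S_X)+g_h(\overline X)$ is \eqref{bsjvuhcyqcuq2}. For the last inequality, fix $X,Z$ and a subpartition $\mathcal{P}$ of $Z$ and pick $v_P\in P$: each member through $v_P$ not rooted in $P$ contributes, after trimming, a dyperedge entering $P$; these are distinct and each enters one member of $\mathcal{P}$, so $e_{\mathcal{A}}(\mathcal{P})\ge h|\mathcal{P}|-\rho(\cup\mathcal{P})$, while $\rho(\cup\mathcal{P})\le\rho(Z)\le\bigl(r_{\sf M}(S_X)-f(X-Z)\bigr)+g_h(Z-X)$ from $\rho(X)\le r_{\sf M}(S_X)$, $\rho\ge f$, $\rho\le g_h$.

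For sufficiency I would first reduce to a purely vectorial statement: the required packing exists iff there is an integral $\rho$ that is an ${\sf M}$-basic profile, lies in $T(f,g_h)$, and satisfies $e_{\mathcal{A}}(\mathcal{P})+\rho(\cup\mathcal{P})\ge h|\mathcal{P}|$ for every subpartition $\mathcal{P}$ of $V$. Indeed, from such a $\rho$ take a basis $B$ with $|B\cap S_v|=\rho(v)$ as the prescribed root multiset; the box condition encodes $(f,g)$-boundedness together with $h$-regularity, and the subpartition inequality is exactly the condition for an $h$-regular packing of hyperarborescences in $\mathcal{D}$ with this fixed root multiset -- the fixed-root case, which follows from the Frank--Kir\'aly--Kir\'aly condition of Theorem~\ref{hyperarborescencesmulti} via the usual super-root reduction relating $h$-regular packings to packings of spanning hyperbranchings. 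The converse direction of this reduction is the necessity computation just given.

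It then remains to produce the vector $\rho$. The ${\sf M}$-basic profiles are the integral points of the g-polymatroid $Q(p',b')$ with $b'(Z)=r_{\sf M}(S_Z)$ and $p'(Z)=r_{\sf M}(S)-r_{\sf M}(S_{\overline Z})$ (a base polytope, hence integral); by parts \ref{TQ} and \ref{QcapT} of Theorem~\ref{gpm}, its intersection with $T(f,g_h)$ is, when nonempty, a g-polymatroid $Q(p_1,b_1)$ with $b_1(Z)=\min_{X}\{r_{\sf M}(S_X)-f(X-Z)+g_h(Z-X)\}$, and nonemptiness amounts to \eqref{fghu}, \eqref{bsjvuhcyqcuq2} and $f(X)\le r_{\sf M}(S_X)$ for all $X$ -- the last being the case $\mathcal{P}=\emptyset$, $Z=\emptyset$ of the $e_{\mathcal{A}}$-inequality. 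Now set $p_2(Z):=\max\{\,h|\mathcal{P}|-e_{\mathcal{A}}(\mathcal{P}):\mathcal{P}\text{ a subpartition of }V,\ \cup\mathcal{P}\subseteq Z\,\}$, so that the subpartition condition reads $\rho(Z)\ge p_2(Z)$ for all $Z$. Since every element of $Q(p_1,b_1)$ has coordinate sum $r_{\sf M}(S)$, these lower bounds are equivalent to the submodular upper bounds $\rho(W)\le r_{\sf M}(S)-p_2(\overline W)$, so (granting that $p_2$ is supermodular, see below) the admissible $\rho$ form a g-polymatroid $Q(p_2,b_2)$, $b_2(W)=r_{\sf M}(S)-p_2(\overline W)$ (with $b_2(\emptyset):=0$), and by part \ref{QcapQ} of Theorem~\ref{gpm} the intersection $Q(p_1,b_1)\cap Q(p_2,b_2)$ is nonempty and contains an integral vector iff $p_1\le b_2$ and $p_2\le b_1$. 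A short rewriting shows that \emph{each} of these two inequalities unfolds exactly to the inequality $e_{\mathcal{A}}(\mathcal{P})+r_{\sf M}(S_X)-f(X-Z)+g_h(Z-X)\ge h|\mathcal{P}|$ of the theorem ($p_2\le b_1$ directly, $p_1\le b_2$ after replacing $Z$ by $\overline Z$ and $X$ by $\overline X$, using the formula for $p_1$ from part \ref{QcapT}). An integral $\rho$ in the intersection is an ${\sf M}$-basic profile lying in $T(f,g_h)$ and meeting the subpartition condition, and the first reduction completes the proof.

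The step I expect to be the main obstacle is the supermodularity of $p_2$. The argument is an uncrossing one: if $\mathcal{P}_1,\mathcal{P}_2$ attain $p_2(Z_1),p_2(Z_2)$, replace them by $\mathcal{P}_1\sqcap\mathcal{P}_2$ and $\mathcal{P}_1\sqcup\mathcal{P}_2$, which are partitions of $\cup\mathcal{P}_1\cap\cup\mathcal{P}_2\subseteq Z_1\cap Z_2$ and of $\cup\mathcal{P}_1\cup\cup\mathcal{P}_2\subseteq Z_1\cup Z_2$; then use \eqref{kvkjvk3} for the member counts together with the submodular inequality $e_{\mathcal{A}}(\mathcal{P}_1)+e_{\mathcal{A}}(\mathcal{P}_2)\ge e_{\mathcal{A}}(\mathcal{P}_1\sqcap\mathcal{P}_2)+e_{\mathcal{A}}(\mathcal{P}_1\sqcup\mathcal{P}_2)$. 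This last inequality is where the work lies: one verifies it dyperedge by dyperedge, using \eqref{kvkjvk1}--\eqref{kvkjvk2} to place the member of $\mathcal{P}_1\sqcap\mathcal{P}_2$ (resp.\ $\mathcal{P}_1\sqcup\mathcal{P}_2$) through the head $z$ of a dyperedge $Zz$ inside the intersection (resp.\ union) of the members of $\mathcal{P}_1,\mathcal{P}_2$ through $z$, and checking in each location case (whether $z$ lies in $\cup\mathcal{P}_1\cap\cup\mathcal{P}_2$, in exactly one of the $\cup\mathcal{P}_i$, or in neither) that $Zz$ enters no more members on the right than on the left. A secondary, routine point is the two-sided g-polymatroid bookkeeping -- checking that $Q(p_2,b_2)$ really satisfies the g-polymatroid inequality and that $p_2(V)\le r_{\sf M}(S)$ (the case $X=Z=V$ of the $e_{\mathcal{A}}$-inequality) -- which is handled directly from Theorem~\ref{gpm}.
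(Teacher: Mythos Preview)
Your proposal is correct and mirrors the paper's approach almost exactly. The paper does not prove Theorem~\ref{jegdvvlmcblkcbkuhyper} directly (it is cited from~\cite{szigrooted}), but it proves the more general Theorem~\ref{iycivsxcomgen}, from which Theorem~\ref{jegdvvlmcblkcbkuhyper} is recovered via Theorems~\ref{bjhkcgckh} and~\ref{jegdvvlmcblkcbkuhypermix}; the engine of that proof is precisely your $p_2$, which is the paper's $\hat p$, together with its supermodularity obtained from the uncrossing inequality $e_{\mathcal{A}}(\mathcal{P}_1)+e_{\mathcal{A}}(\mathcal{P}_2)\ge e_{\mathcal{A}}(\mathcal{P}_1\sqcap\mathcal{P}_2)+e_{\mathcal{A}}(\mathcal{P}_1\sqcup\mathcal{P}_2)$ (Claims~\ref{submodeep} and~\ref{psup}), followed by a g-polymatroid intersection and the super-root reduction (Corollary~\ref{jfxghvbjkmix}) for the fixed-root packing step. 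The only cosmetic differences are that the paper pairs $\hat p$ with the upper bound $h|X|$ coming from $T(0,h)$ rather than your complemented bound $r_{\sf M}(S)-p_2(\overline W)$, and builds the root-profile polytope as $Q(-\infty_0,b_{\sf M})\cap K(r_{\sf M}(S),r_{\sf M}(S))\cap T(f,g_h)$ rather than starting from the base polytope; both lead to the same intersection condition.
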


If $\mathcal{D}$ is digraph, ${\sf M}=U_{k\times V,k}$ is the uniform matroid of rank $k$ on $k\times V$ and $h=k$, then Theorem \ref{jegdvvlmcblkcbkuhyper} reduces to Theorem \ref{frankarborescencesroots}. For a multiset $S$ of $V$, if ${\sf M}$ is the free matroid on $S$, $f(v)=g(v)=|S_v|$ for every $v\in V,$ and $h=|S|$, then Theorem \ref{jegdvvlmcblkcbkuhyper} reduces to Theorem \ref{hyperarborescencesmulti}.
 \medskip
 
Theorem \ref{jegdvvlmcblkcbkuhyper} and an orientation result of Gao \cite{gao} provide the following theorem.

\begin{thm}\label{jegdvvlmcblkcbkuhypermix}
Let $\mathcal{F}=(V,\mathcal{E}\cup\mathcal{A})$ be a mixed hypergraph, {$h$} $\in \mathbb{Z}_+$, $f,g\in \mathbb Z^V_+$,  {$S$} a multiset of vertices in $V$, and {${\sf M}$} $=(S,r_{\sf M})$ a matroid. There exists an $h$-regular  ${\sf M}$-basis-rooted $(f,g)$-bounded  packing of mixed hyperarborescences in $\mathcal{F}$ if and only if \eqref{fghu} and \eqref{bsjvuhcyqcuq2} hold and for all $U,W\subseteq V$ and subpartition $\mathcal{P}$ of $W,$
\begin{eqnarray}\label{bsjvuhcyqcuqhypermix}
	e_{\mathcal{E}\cup\mathcal{A}}(\mathcal{P})+r_{\sf M}(S_{U})-f(U-W)+g_h(W-U)	&	\ge	& 	h|\mathcal{P}|.
\end{eqnarray}
\end{thm}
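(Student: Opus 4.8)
The plan is to reduce the mixed-hypergraph statement to the purely directed statement of Theorem \ref{jegdvvlmcblkcbkuhyper} by means of the orientation result of Gao \cite{gao}. First I would observe that, by definition, $\mathcal{F}$ has an $h$-regular ${\sf M}$-basis-rooted $(f,g)$-bounded packing of mixed hyperarborescences if and only if there is an orientation $\vec{\mathcal{F}}=(V,\vec{\mathcal{E}}\cup\mathcal{A})$ of the hyperedges of $\mathcal{F}$ such that the resulting dypergraph has an $h$-regular ${\sf M}$-basis-rooted $(f,g)$-bounded packing of hyperarborescences. For a fixed orientation, Theorem \ref{jegdvvlmcblkcbkuhyper} gives the exact characterization: condition \eqref{fghu} and \eqref{bsjvuhcyqcuq2} (which do not depend on the orientation), together with the degree-type inequality $e_{\vec{\mathcal{E}}\cup\mathcal{A}}(\mathcal{P})+r_{\sf M}(S_U)-f(U-W)+g_h(W-U)\ge h|\mathcal{P}|$ for all $U,W\subseteq V$ and every subpartition $\mathcal{P}$ of $W$. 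So the task is to decide whether there is an orientation making all of these hold simultaneously.

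The key point is that the only orientation-dependent quantity is $e_{\vec{\mathcal{E}}\cup\mathcal{A}}(\mathcal{P})$. When we orient a hyperedge $Z$ into some $Z'z$, it ceases to enter a member $P$ of $\mathcal{P}$ precisely when $z\in Z'$, i.e.\ when we choose the head of the oriented dyperedge inside $P$; more precisely $Z'z$ enters $P$ iff $z\in P$ and $Z'\cap\overline P\ne\emptyset$. Thus orienting hyperedges so as to simultaneously satisfy all the covering inequalities indexed by subpartitions is exactly the kind of condition handled by Gao's orientation theorem. The second step, therefore, is to invoke \cite{gao} in the form: a mixed hypergraph can be oriented so that a prescribed family of subpartition-covering inequalities $e_{\vec{\mathcal{E}}\cup\mathcal{A}}(\mathcal{P})\ge \varphi(\mathcal{P})$ holds for all subpartitions $\mathcal{P}$ if and only if the ``static'' inequality $e_{\mathcal{E}\cup\mathcal{A}}(\mathcal{P})\ge\varphi(\mathcal{P})$ already holds in $\mathcal{F}$ (where on the left we now count hyperedges and dyperedges of $\mathcal{F}$ entering $\mathcal{P}$, not of an orientation). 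Here the right-hand side is $\varphi(\mathcal{P})=\max\{h|\mathcal{P}|-r_{\sf M}(S_U)+f(U-W)-g_h(W-U): U\subseteq V, W\supseteq\cup\mathcal{P}\}$, or equivalently one takes the inequality \eqref{bsjvuhcyqcuqhypermix} directly with $e_{\mathcal{E}\cup\mathcal{A}}$ in place of $e_{\vec{\mathcal{E}}\cup\mathcal{A}}$.

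To make this rigorous I would check that the right-hand side function $\varphi$ has whatever structural property Gao's theorem requires (typically that $\mathcal{P}\mapsto\varphi(\mathcal{P})$ is suitably ``intersecting supermodular'' or that the in-degree/covering function one wants to dominate is crossing submodular); this amounts to verifying that, after fixing $W=\cup\mathcal{P}$, the map $U\mapsto r_{\sf M}(S_U)-f(U-W)+g_h(W-U)$ behaves submodularly and interacts correctly with uncrossing of subpartitions, using \eqref{kvkjvk1}--\eqref{kvkjvk3} and the submodularity of $r_{\sf M}$ exactly as in the proof of Theorem \ref{jegdvvlmcblkcbkuhyper}. Once the orientation theorem applies, the equivalence is immediate: the conditions of Theorem \ref{jegdvvlmcblkcbkuhypermix} say precisely that \eqref{fghu}, \eqref{bsjvuhcyqcuq2} hold and that \eqref{bsjvuhcyqcuqhypermix} holds with $e_{\mathcal{E}\cup\mathcal{A}}$, hence an orientation exists for which Theorem \ref{jegdvvlmcblkcbkuhyper} yields the desired packing; conversely such a packing lives in some orientation, so all three conditions of Theorem \ref{jegdvvlmcblkcbkuhyper} hold for that orientation, and since $e_{\mathcal{E}\cup\mathcal{A}}(\mathcal{P})\ge e_{\vec{\mathcal{E}}\cup\mathcal{A}}(\mathcal{P})$ the static inequalities follow. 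I expect the main obstacle to be precisely the bookkeeping needed to phrase \cite{gao} in a form that covers the combined matroid-plus-$(f,g)$ right-hand side here, i.e.\ confirming that the ``excess'' function is of the type Gao's orientation theorem digests, rather than anything in the final equivalence itself.
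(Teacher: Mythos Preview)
Your proposal is correct and follows exactly the route the paper indicates: the paper states that Theorem \ref{jegdvvlmcblkcbkuhypermix} is obtained by combining Theorem \ref{jegdvvlmcblkcbkuhyper} with Gao's orientation result \cite{gao}, and your plan (orient, apply the dypergraph characterization, use Gao to decide when a good orientation exists, and use $e_{\mathcal{E}\cup\mathcal{A}}(\mathcal{P})\ge e_{\vec{\mathcal{E}}\cup\mathcal{A}}(\mathcal{P})$ for necessity) is precisely that argument spelled out. The only remaining work, as you correctly identify, is the bookkeeping of checking that the right-hand side has the supermodular-type structure required by \cite{gao}.
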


If $\mathcal{F}$ is  a mixed graph, {\sf M} is the uniform matroid of rank $k$ on $k\times V$ and $h=k$, then Theorem \ref{jegdvvlmcblkcbkuhypermix} reduces to Theorem \ref{gaoyang3}. If $\mathcal{F}$ is a dypergraph, then Theorem \ref{jegdvvlmcblkcbkuhypermix} reduces to Theorem \ref{jegdvvlmcblkcbkuhyper}.
\medskip

An extension for mixed hypergraphs of a common generalization of Theorems \ref{bobisuv} and \ref{hsz} was given in Szigeti \cite{szighyp}.

\begin{thm}[Szigeti \cite{szighyp}]\label{sibevhz2} 
Let $\mathcal{F}=(V,\mathcal{E}\cup \mathcal{A})$ be a mixed hypergraph, $f,g: V\rightarrow \mathbb Z_+$ functions, and $h,\alpha,\beta\in \mathbb{Z}_+.$
There exists an $h$-regular $(f,g)$-bounded $(\alpha,\beta)$-limited  packing of mixed hyperarborescences in $\mathcal{F}$ if and only if \eqref{fghu} and \eqref{iviytdtutuy}    hold and  
\begin{eqnarray}\label{jvljh1hyp2}
	e_{{\cal E}\cup{\cal A}}({\cal P})+\min\{\beta-f(\overline{\cup{\cal P}}),g_h(\cup{\cal P})\}		&	\ge 		& 	h|{\cal P}|
	 \hskip .52truecm \text{ for every subpartition } {\cal P} \text{ of } V.
\end{eqnarray}
\end{thm}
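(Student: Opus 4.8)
The plan is to deduce Theorem~\ref{sibevhz2} from Theorem~\ref{jegdvvlmcblkcbkuhypermix} by encoding the $(\alpha,\beta)$-limitation through a uniform matroid, the ``only if'' direction being a routine counting argument.

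\emph{Necessity.} Suppose $\mathcal{F}$ has an $h$-regular $(f,g)$-bounded $(\alpha,\beta)$-limited packing $\mathcal{B}$ of mixed hyperarborescences, realized by an orientation and a trimming, and for $v\in V$ let $\rho_v$ be the number of members of $\mathcal{B}$ rooted at $v$. Since every member rooted at $v$ contains $v$ and $v$ lies in exactly $h$ members, $f(v)\le\rho_v\le\min\{g(v),h\}=g_h(v)$, which is \eqref{fghu}; summing and using $\alpha\le|\mathcal{B}|\le\beta$ gives \eqref{iviytdtutuy}. For a subpartition $\mathcal{P}$ of $V$ and a member $P_i$, pick $v_i\in P_i$: each of the exactly $h$ members of $\mathcal{B}$ through $v_i$ is either rooted in $P_i$ or traverses a hyperedge or dyperedge entering $P_i$; the latter objects are pairwise distinct over all $i$ (their heads in the realization lie in the pairwise disjoint sets $P_i$) and are counted by $e_{\mathcal{E}\cup\mathcal{A}}(\mathcal{P})$, while the members rooted in $\cup\mathcal{P}$ number at most $\min\{\beta-f(\overline{\cup\mathcal{P}}),g_h(\cup\mathcal{P})\}$. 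Summing over $i$ yields \eqref{jvljh1hyp2}.

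\emph{Sufficiency.} Assume \eqref{fghu}, \eqref{iviytdtutuy}, \eqref{jvljh1hyp2}. Let $S$ be the multiset on $V$ in which $v$ has multiplicity $g_h(v)$, and let ${\sf M}=U_{S,\beta}$, so that $r_{\sf M}(S_X)=\min\{\beta,g_h(X)\}$ for all $X\subseteq V$. I would verify that ${\sf M}$ (with the given $f,g,h$) satisfies the hypotheses of Theorem~\ref{jegdvvlmcblkcbkuhypermix}. Condition \eqref{fghu} is shared; condition \eqref{bsjvuhcyqcuq2} becomes $\min\{\beta,g_h(X)\}+g_h(\overline X)\ge\min\{\beta,g_h(V)\}$, which is immediate by splitting on whether $\beta\ge g_h(X)$. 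For \eqref{bsjvuhcyqcuqhypermix}, fix $U,W\subseteq V$ and a subpartition $\mathcal{P}$ of $W$; since $e_{\mathcal{E}\cup\mathcal{A}}(\mathcal{P})$ depends only on $\mathcal{P}$, it suffices to bound the root term from below. One checks that $g_h(W-U)-f(U-W)$ is minimized over $W\supseteq\cup\mathcal{P}$ at $W=\cup\mathcal{P}$, and then, writing $P=\cup\mathcal{P}$, that $\min\{\beta,g_h(U)\}-f(U\cap\overline P)+g_h(P-U)\ge\min\{\beta-f(\overline P),g_h(P)\}$ for every $U$, by a two-case analysis on whether $g_h(U)\le\beta$, using \eqref{fghu} in the forms $g_h(U)-f(U\cap\overline P)\ge g_h(U\cap P)$ and $g_h(V)-f(\overline P)\ge g_h(P)$. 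Then \eqref{jvljh1hyp2} applied to $\mathcal{P}$ gives $e_{\mathcal{E}\cup\mathcal{A}}(\mathcal{P})+\min\{\beta-f(\overline P),g_h(P)\}\ge h|\mathcal{P}|$, which is \eqref{bsjvuhcyqcuqhypermix}. Hence Theorem~\ref{jegdvvlmcblkcbkuhypermix} provides an $h$-regular ${\sf M}$-basis-rooted $(f,g)$-bounded packing of mixed hyperarborescences in $\mathcal{F}$; its root multiset is a basis of $U_{S,\beta}$, hence of size $\min\{\beta,g_h(V)\}$, which is at most $\beta$ and at least $\alpha$ by \eqref{iviytdtutuy}, so the packing is $(\alpha,\beta)$-limited.

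I expect the single real obstacle to be the verification of \eqref{bsjvuhcyqcuqhypermix}: recognizing $U_{S,\beta}$ on the $g_h$-multiset as the correct matroid, and seeing that the auxiliary minimization over the sets $U$ and $W$ (a box intersected with a cardinality band) collapses, via \eqref{fghu}, to exactly the quantity $\min\{\beta-f(\overline{\cup\mathcal{P}}),g_h(\cup\mathcal{P})\}$ appearing in \eqref{jvljh1hyp2}; everything else is bookkeeping and an appeal to Theorem~\ref{jegdvvlmcblkcbkuhypermix}.
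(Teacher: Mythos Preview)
The paper does not give its own proof of Theorem~\ref{sibevhz2}; it is quoted from \cite{szighyp} as a known result and is in fact \emph{used} (its necessity direction) inside the proof of Lemma~\ref{gpl2}(a). The only derivation the paper indicates is the remark that Theorem~\ref{sibevhz2} is the special case of Theorem~\ref{bjhkcgckh} obtained by taking ${\sf M}$ to be the free matroid on $h\times V$, with Theorem~\ref{bjhkcgckh} itself obtained from the augmentation Theorem~\ref{iycivsxcomgen} by setting $f'=g'=q=q'=0$.

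Your route is correct and genuinely different. You stay entirely within the packing results of Section~\ref{Packingproblems}: you encode the $(\alpha,\beta)$-limitation by the uniform matroid $U_{S,\beta}$ on the $g_h$-weighted multiset $S$, so that $r_{\sf M}(S_X)=\min\{\beta,g_h(X)\}$, and then invoke Theorem~\ref{jegdvvlmcblkcbkuhypermix}. The verification of \eqref{bsjvuhcyqcuq2} is immediate, and your reduction of \eqref{bsjvuhcyqcuqhypermix} to \eqref{jvljh1hyp2}---first observing that the left side is monotone in $W\supseteq\cup\mathcal{P}$, then a two-case split on $g_h(U)\lessgtr\beta$ using \eqref{fghu}---goes through cleanly. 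The resulting ${\sf M}$-basis-rooted packing has exactly $\min\{\beta,g_h(V)\}$ members, which lies in $[\alpha,\beta]$ by \eqref{iviytdtutuy}. Your necessity argument is also fine; the injectivity claim (``pairwise distinct over all $i$'') holds because the realized arcs have heads in the pairwise disjoint $P_i$, and distinct arborescences in a packing are hyperedge- and dyperedge-disjoint.

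Compared with the paper's implicit derivation (free matroid in Theorem~\ref{bjhkcgckh}, hence through the full $g$-polymatroid machinery of Theorem~\ref{iycivsxcomgen}), your argument is shorter and avoids the augmentation layer altogether; the price is the slightly more delicate choice of matroid and the case analysis needed to collapse \eqref{bsjvuhcyqcuqhypermix} to \eqref{jvljh1hyp2}.
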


If $\mathcal{F}$ is digraph, then Theorem \ref{sibevhz2} reduces to Theorem \ref{bobisuv}. If  $h=\alpha=\beta=k,$ then Theorem \ref{sibevhz2} reduces to Theorem \ref{hsz}. 
\medskip

As a new result we now provide a common generalization of the previous two theorems.

\begin{thm}\label{bjhkcgckh} 
Let $\mathcal{F}=(V,\mathcal{E}\cup\mathcal{A})$  be a mixed hypergraph, $f,g: V\rightarrow \mathbb Z_+$ functions, $h,\alpha,\beta\in \mathbb{Z}_+,$ {$S$} a multiset of vertices in $V$, and {${\sf M}$} $=(S,r_{\sf M})$ a matroid. 
There exists an $h$-regular ${\sf M}$-independent-rooted $(f,g)$-bounded $(\alpha,\beta)$-limited packing of mixed hyperarborescences in $\mathcal{F}$ if and only if \eqref{fghu}   holds and  for all  $X, Z\subseteq V$ and subpartition $\mathcal{P}$ of $Z$,
\begin{eqnarray}
	\alpha											&	\le 	& 	\beta,	\label{ellll}\\
	\max\{h,\alpha\}-r_{\sf M}(S_{\overline X})+ f(Z-X)-g_h(X-Z)	&	\le	&	h|Z|,		\label{utrdciyvouicomgenj}\\
	e_{{\cal E}\cup{\cal A}}({\cal P})+\min\{\beta-f(\overline Z),r_{\sf M}(S_{X})-f(X-Z)+g_h(Z-X)\}	&	\ge 	& 	h|\mathcal{P}|.			\label{uhvigcftfcomgen}
\end{eqnarray}
\end{thm}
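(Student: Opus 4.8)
The plan is to derive Theorem~\ref{bjhkcgckh} by combining the matroidal packing result Theorem~\ref{jegdvvlmcblkcbkuhypermix} with the machinery of generalized polymatroids from Theorem~\ref{gpm}, exactly in the spirit of how Theorem~\ref{sibevhz2} generalizes Theorem~\ref{hsz} and how Theorem~\ref{jegdvvlmcblkcbkuhypermix} adds a matroid constraint to the picture. The key observation is that an $h$-regular ${\sf M}$-independent-rooted $(f,g)$-bounded $(\alpha,\beta)$-limited packing of mixed hyperarborescences exists in $\mathcal{F}$ if and only if one can choose a sub-multiset $S'\subseteq S$ that is independent in ${\sf M}$ together with an integer root-multiplicity vector $x\in\mathbb{Z}^V$ (where $x(v)$ records how many arborescences are rooted at $v$) such that $f\le x\le g$, $\alpha\le x(V)\le\beta$, the vector is "supported" by $S'$ in the matroidal sense, and the resulting rooted demand admits an $h$-regular packing in the sense of Theorem~\ref{jegdvvlmcblkcbkuhypermix}. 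So the first step is to set up the correct ground set and the correct supermodular/submodular pair $(p,b)$ on $V$ encoding the existence condition \eqref{bsjvuhcyqcuqhypermix} of the basis-rooted theorem, reformulated for the independent-rooted case.

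Concretely, I would proceed as follows. First, reduce the ${\sf M}$-independent-rooted case to the ${\sf M}$-basis-rooted case by a standard matroid augmentation: extend ${\sf M}$ to a matroid ${\sf M}^+$ on a larger multiset $S^+ = S \cup (r_{\sf M}(S)\times V)$ (or a suitable free extension) so that every independent set of ${\sf M}$ extends to a basis of ${\sf M}^+$ using only the new "dummy" elements, and observe that a packing that is ${\sf M}$-independent-rooted in $\mathcal{F}$ corresponds to a packing that is ${\sf M}^+$-basis-rooted in $\mathcal{F}$ augmented by sufficiently many dummy arborescences. The $(\alpha,\beta)$-limit and $h$-regularity then become constraints coupling the real part with the dummy part. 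Second, translate the conditions of Theorem~\ref{jegdvvlmcblkcbkuhypermix} (applied to $\mathcal{F}$ with ${\sf M}^+$) together with the box constraint $T(f,g)$ and the size constraint $K(\alpha,\beta)$ into the nonemptiness of an intersection of a $g$-polymatroid $Q(p,b)$ with $T(f,g)$ and with $K(\alpha,\beta)$, where $p(Z)$ is essentially $\max\{h,\alpha\} - r_{\sf M}(S_{\overline Z}) + \dots$ type expressions and $b(Z)$ comes from the $e_{\mathcal{E}\cup\mathcal{A}}$ inequalities — this is where the "new submodular function" advertised in the introduction enters. Third, apply parts~\ref{QcapT}, \ref{QcapK}, and \ref{sum} of Theorem~\ref{gpm} to this intersection: nonemptiness of $Q(p,b)\cap T(f,g)\cap K(\alpha,\beta)$ over the integers is equivalent to $\max\{p,f\}\le\min\{b,g\}$, $\alpha\le\beta$, $\beta\ge p(V)$, $\alpha\le b(V)$, which after unwinding should be precisely conditions \eqref{fghu}, \eqref{ellll}, \eqref{utrdciyvouicomgenj}, and \eqref{uhvigcftfcomgen}.

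For the necessity direction, given such a packing I would read off the root-count vector $x$, check it lies in the three polyhedra, and invoke submodularity/supermodularity of the relevant functions together with the validity of the basis-rooted conditions for the induced subpacking; conditions \eqref{ellll}, \eqref{utrdciyvouicomgenj} and \eqref{uhvigcftfcomgen} then follow by substituting the feasible $x$ and using $r_{\sf M}(S_X)\ge x(X)\ge f(X)$ on appropriate sets. The inequality \eqref{utrdciyvouicomgenj} in particular should come from combining $x(V)\ge\max\{h,\alpha\}$ (since the packing is $h$-regular, hence has at least $h$ arborescences, and is $\alpha$-limited) with the matroid bound $x(\overline X)\le r_{\sf M}(S_{\overline X})$ and the box bounds on $Z\setminus X$ and $X\setminus Z$. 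For sufficiency, from the three stated conditions I would verify $\max\{p,f\}\le\min\{b,g\}$, $\beta\ge p(V)$, $\alpha\le b(V)$ hold, conclude by Theorem~\ref{gpm} that an integral point $x$ exists in the intersection, and then feed $x$ back into Theorem~\ref{jegdvvlmcblkcbkuhypermix} (with ${\sf M}^+$, or directly with a matroid induced on $S$ truncated appropriately) to build the packing.

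The main obstacle I expect is the bookkeeping around the matroid: getting the passage between "independent-rooted" and "basis-rooted" to interact cleanly with the $h$-regularity and the $(\alpha,\beta)$-limit simultaneously, and in particular verifying that the function $Z\mapsto r_{\sf M}(S_X)-f(X-Z)+g_h(Z-X)$ minimized over $X$ (together with the $\min$ with $\beta - f(\overline Z)$) really is the submodular function $b$ one wants, so that part~\ref{QcapT} of Theorem~\ref{gpm} can be applied and the truncation formulas $p^g_f, b^g_f, p_\alpha^\beta, b_\alpha^\beta$ collapse to exactly the three displayed inequalities. Showing that no redundant conditions survive — i.e. that $p\le b$ and $\alpha\le b(V)$ are implied by \eqref{fghu}, \eqref{ellll}, \eqref{utrdciyvouicomgenj}, \eqref{uhvigcftfcomgen} and do not need to be listed separately — will require a careful choice of test sets and is the part most likely to hide a subtlety.
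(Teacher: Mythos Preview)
Your overall instinct --- encode the root-multiplicity vector as a point in a $g$-polymatroid intersection and read off the conditions via Theorem~\ref{gpm} --- matches the paper's method. But the route you sketch diverges from the paper in two places, and one of them is a real gap.

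\textbf{The reduction to basis-rooted is the wrong turn.} You propose to extend ${\sf M}$ to some ${\sf M}^+$ so that ${\sf M}$-independent-rooted becomes ${\sf M}^+$-basis-rooted, and then invoke Theorem~\ref{jegdvvlmcblkcbkuhypermix}. As you yourself flag, this does not interact cleanly with $h$-regularity and the $(\alpha,\beta)$-limit: a basis-rooted packing has a \emph{fixed} number of arborescences equal to $r_{{\sf M}^+}(S^+)$, while you need that number to range over $[\alpha,\beta]$; and any ``dummy'' arborescences you add to pad an independent set out to a basis would have to be counted toward the $h$-regularity of every vertex they touch, forcing you to change $h$, $f$, $g$ simultaneously. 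There is no clean free-extension trick that absorbs all three constraints at once, and the paper does not attempt one.

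\textbf{What the paper actually does.} The paper derives Theorem~\ref{bjhkcgckh} as the $f'=g'=q=q'=0$ case of Theorem~\ref{iycivsxcomgen}, whose proof bypasses Theorem~\ref{jegdvvlmcblkcbkuhypermix} entirely. The root vector sits in
\[
Q_2=Q(-\infty_0,b_{\sf M})\cap K(\max\{h,\alpha\},\beta)\cap T(f,g_h),
\]
which already handles the matroid constraint, the $(\alpha,\beta)$-limit, and the $(f,g)$-bound in one shot --- no passage to a basis is needed. The packing-feasibility side is encoded not by the conditions of Theorem~\ref{jegdvvlmcblkcbkuhypermix} but by a \emph{new} supermodular function
\[
\hat p(Z)=\max\{\,h|\mathcal{P}|-e_{\mathcal{E}\cup\mathcal{A}}(\mathcal{P}):\mathcal{P}\text{ subpartition of }Z\,\},
\]
whose supermodularity is the paper's key technical lemma (Claim~\ref{submodeep}: $e_{\mathcal{E}\cup\mathcal{A}}$ is submodular with respect to the $\sqcap,\sqcup$ operations on subpartitions). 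One then sets $Q_3=Q(\hat p,\infty_0)\cap T(0,h)$ and checks $Q_2\cap Q_3\neq\emptyset$ via Theorem~\ref{gpm}.\ref{QcapQ}; unwinding $p_2\le b_3$ and $p_3\le b_2$ yields exactly \eqref{utrdciyvouicomgenj} and \eqref{uhvigcftfcomgen}. For the construction of the packing from an integral $m\in Q_2\cap Q_3$, the paper does not call Theorem~\ref{jegdvvlmcblkcbkuhypermix} either: it adds a new vertex $s$ with $m(v)$ parallel arcs $sv$, applies the much simpler Corollary~\ref{jfxghvbjkmix} to get $h$ spanning mixed $s$-hyperarborescences, then deletes $s$ and uses a standard matroid lemma (Theorem~13.1.2 in \cite{book}) to realise the root multiset inside ${\sf M}$.

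So: keep the $g$-polymatroid intersection picture, drop the basis-rooted reduction, and replace the black-box call to Theorem~\ref{jegdvvlmcblkcbkuhypermix} by (i) the supermodularity of $\hat p$ and (ii) Corollary~\ref{jfxghvbjkmix} for the construction step.
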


If $\alpha=\beta=r_{\sf M}(S)$, then Theorem \ref{bjhkcgckh} reduces to Theorem \ref{jegdvvlmcblkcbkuhypermix}. If {\sf M} is the free matroid on $h\times V$, then Theorem \ref{bjhkcgckh} reduces to Theorem \ref{sibevhz2}. Theorem \ref{bjhkcgckh} will easily follow from  Theorem \ref{iycivsxcomgen}.

\subsection{Packing branchings and mixed hyperbranchings}

Edmonds \cite{Egy} also gave the characterization of the existence of a packing of spanning branchings with fixed root sets.

\begin{thm}[Edmonds \cite{Egy}]\label{edmondsbranchings}
Let $D=(V,A)$ be a digraph and ${\cal S}=\{S_1,\dots,S_k\}$ a family of subsets of $V$.  There exists a packing of $k$ spanning branchings with root sets $S_1,\dots,S_k$ in $D$ if and only if 
	\begin{eqnarray*}
		|{\cal S}_X|+d^-_A(X)&\geq &k \hskip .5truecm \text{ for every non-empty  $X\subseteq V.$}
	\end{eqnarray*}
\end{thm}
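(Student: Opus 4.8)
\emph{Necessity} is immediate: for a packing $B_1,\dots,B_k$ and $\emptyset\neq X\subseteq V$, every index $i$ with $S_i\cap X\neq\emptyset$ contributes $1$ to $|{\cal S}_X|$, while every index $i$ with $S_i\cap X=\emptyset$ forces $d^-_{B_i}(X)\geq1$ since the spanning branching $B_i$ must reach $X$ from outside; adding up these $k$ contributions and using arc-disjointness gives $|{\cal S}_X|+d^-_A(X)\geq k$. For \emph{sufficiency} I would argue by induction on a suitable complexity measure of the instance (such as the number $k|V|-\sum_i|S_i|$ of non-root slots, suitably refined). If some $S_i=V$, the corresponding branching is empty and that index can be deleted, which keeps the condition intact ($|{\cal S}_X|$ dropping by exactly $1$ on every non-empty $X$); so assume $S_i\subsetneq V$ for all $i$. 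Call a non-empty $X$ \emph{tight} if $|{\cal S}_X|+d^-_A(X)=k$; since $X\mapsto|{\cal S}_X|$ and $X\mapsto d^-_A(X)$ are submodular, the tight sets are closed under intersection and under union of intersecting pairs, so they can be uncrossed.

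Then I would split into two cases. If there is no proper tight set, pick an index $i$ with $S_i\neq V$ and a vertex $v\in V\setminus S_i$ (note $d^-_A(v)\geq1$, as otherwise the inequality at $\{v\}$ would force $v$ into every $S_j$), and let $uv$ be \emph{any} arc of $A$. Deleting $uv$ and enlarging $S_i$ to $S_i\cup\{v\}$ preserves the condition, because the only inequalities that could be harmed are those for sets entered by $uv$, which are proper and hence either carry a free unit of slack (if they meet $S_i$) or gain a unit in ${\cal S}_X$ (if they miss $S_i$); solving the smaller instance by induction and re-inserting $uv$ into the $i$-th branching, which turns $v$ from a root into a non-root, completes the step. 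If instead a proper tight set $X$ exists, I would use the classical splitting: form one instance by contracting $V\setminus X$ to a single vertex and another by contracting $X$ to a single vertex, transfer the root sets to these contractions in the natural way (a root set disjoint from $X$ becomes a singleton at the new vertex in the first instance, and a root set meeting $X$ acquires the new vertex in the second), check that both instances still satisfy the inequality, solve them by induction, and glue the two packings along $X$.

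I expect the splitting step to be the main obstacle: one must arrange the contracted root sets so that the induction applies to both instances \emph{and} so that the two partial packings recombine into genuine branchings with exactly the prescribed root sets, and the fact that makes this go through is that tightness of $X$ forces the number of arcs crossing $X$ to equal the relevant deficiency exactly---a short but essential counting argument, and one has to be a little careful about degenerate choices of $X$ (such as $X=V\setminus\{z\}$) so that the complexity measure genuinely drops. Everything else---necessity, the removal of full root sets, and the ``no proper tight set'' case---is routine. As a consistency check, when all the $S_i$ are singletons the statement reduces to Theorem~\ref{edmondsarborescencesmulti}.
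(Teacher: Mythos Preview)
The paper does not prove Theorem~\ref{edmondsbranchings}; it is simply quoted as Edmonds' result and used as background. So there is no ``paper's own proof'' to compare against, and your plan must stand on its own.

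It does not quite stand. In the ``no proper tight set'' case you pick $v\in V\setminus S_i$, take \emph{any} arc $uv\in A$, delete it, enlarge $S_i$ to $S_i\cup\{v\}$, solve by induction, and then re-insert $uv$ into the $i$-th branching $B_i'$. The re-insertion step is where the argument breaks: $B_i'$ is a spanning $(S_i\cup\{v\})$-branching, so it is a forest whose components are rooted at the elements of $S_i\cup\{v\}$, and nothing prevents $u$ from lying in the component of $B_i'$ rooted at $v$. If it does, adding $uv$ creates a directed cycle rather than an $S_i$-branching. Your verification that the degree condition is preserved under the deletion--enlargement step is correct; the error is only in the reconstruction.

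The fix is cheap: instead of choosing $uv$ arbitrarily, choose it with $u\in S_i$. Such an arc exists, since applying the hypothesis to $X=V\setminus S_i$ gives $|\mathcal{S}_X|\le k-1$ and hence $d^-_A(X)\ge 1$, i.e.\ some arc goes from $S_i$ into $V\setminus S_i$. With this choice $u$ is itself a root of $B_i'$, hence certainly not in the $v$-component, and $B_i'+uv$ is the desired $S_i$-branching. Your preservation argument for the condition goes through unchanged, because the only property of $uv$ you used there was that sets entered by $uv$ are proper.

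As for the tight-set splitting, you flag it yourself as the main obstacle, and indeed the sketch is too thin to evaluate: the contracted instances as you describe them run into the problem that the contracted vertex has no incoming arcs but must still be spanned by branchings whose root sets miss it. With the repaired ``no proper tight set'' step in hand, a cleaner route is the Lov\'asz-style argument that iterates that single step (always picking $u\in S_i$, and when proper tight sets do exist, choosing $v$ inside a minimal one so that the condition survives on tight sets as well); this avoids splitting entirely.
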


For ${\cal S}=\{s:s\in S\},$ Theorem \ref{edmondsbranchings} reduces to Theorem \ref{edmondsarborescencesmulti}.
\medskip

Another special case of a theorem of B\'erczi and Frank \cite{BF3} provides the following extension of Theorem \ref{edmondsarborescencesmulti}. Let us recall that for a function $\ell: \mathbb{Z}_k \rightarrow \mathbb{Z}_+$, $\ell(\mathbb{Z}_k)=\sum_{i=1}^k\ell(i).$

\begin{thm}[B\'erczi, Frank \cite{BF3}]\label{BFmain}
	Let $D = (V, A)$ be a digraph, $k, \alpha, \beta \in \mathbb{Z}_+$, and $\ell, \ell': \mathbb{Z}_k \rightarrow \mathbb{Z}_+$ such that
	\begin{alignat}{3}
		\ell'(\mathbb{Z}_k) 	&\ge \beta \ge \alpha \ge \ell(\mathbb{Z}_k), 				\label{totnecessary}\\
		|V| 				&\ge \ell'(i) \ge \ell(i) \qquad\qquad \text{for every } 1 \le i \le k. 	\label{indivnecessary}
	\end{alignat}
	There exists an $(\ell, \ell')$-bordered $(\alpha, \beta)$-limited packing of $k$ spanning  branchings in $D$ if and only if
	\begin{alignat}{3}
	\beta - \ell(\mathbb{Z}_k) + \sum_{i=1}^k\min\{|\mathcal{P}|,\ell(i)\} + e_A(\mathcal{P}) 	&	\ge k|\mathcal{P}| \qquad 		&&	\text{for every subpartition } \mathcal{P} \text{ of } V, \\
	\sum_{i=1}^k\min\{|\mathcal{P}|,\ell'(i)\} + e_A({\cal P}) 	&	\ge k|\mathcal{P}| \qquad 
	&&	\text{for every subpartition } \mathcal{P} \text{ of } V.
	\end{alignat}
\end{thm}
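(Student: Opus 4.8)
The plan is to prove necessity by a short counting argument resting on Edmonds' branching theorem (Theorem~\ref{edmondsbranchings}), and sufficiency by showing that the set of feasible size-profiles $(|S_1|,\dots,|S_k|)$ of the root sets is exactly the set of integral elements of a generalized polymatroid, which one then intersects with the box $T(\ell,\ell')$ and the cardinality interval $K(\alpha,\beta)$ using Theorem~\ref{gpm}.

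For necessity, suppose a packing as in the statement exists, with root sets $S_1,\dots,S_k$; thus $\ell(i)\le|S_i|\le\ell'(i)$ and $\alpha\le\sum_i|S_i|\le\beta$. For a subpartition $\mathcal P$ of $V$, Theorem~\ref{edmondsbranchings} applied to each member $P\in\mathcal P$ gives $d^-_A(P)\ge k-\#\{i:S_i\cap P\neq\emptyset\}$, and, the members of $\mathcal P$ being pairwise disjoint and every arc having a single head, $e_A(\mathcal P)=\sum_{P\in\mathcal P}d^-_A(P)$. Summing and rearranging, $e_A(\mathcal P)\ge k|\mathcal P|-\sum_{i=1}^k\#\{P\in\mathcal P:S_i\cap P\neq\emptyset\}$. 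Since $S_i$ meets at most $\min\{|S_i|,|\mathcal P|\}$ members of $\mathcal P$, bounding this above by $\min\{\ell'(i),|\mathcal P|\}$ yields the second inequality of the statement, and bounding it above by $\min\{\ell(i),|\mathcal P|\}+(|S_i|-\ell(i))$ and using $\sum_i(|S_i|-\ell(i))\le\beta-\ell(\mathbb Z_k)$ yields the first.

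For sufficiency, assume \eqref{totnecessary}, \eqref{indivnecessary}, and the two displayed inequalities. By Theorem~\ref{edmondsbranchings} it suffices to find sets $S_1,\dots,S_k\subseteq V$ with $\ell(i)\le|S_i|\le\ell'(i)$, $\alpha\le\sum_i|S_i|\le\beta$, and $\#\{i:S_i\cap X\neq\emptyset\}+d^-_A(X)\ge k$ for every nonempty $X\subseteq V$. Call $c\in\mathbb Z^{\mathbb Z_k}_+$ \emph{achievable} if there are such sets with $|S_i|=c_i$ for all $i$ (ignoring the $\alpha,\beta$ bounds). The core claim I would prove is that the achievable vectors are precisely the integral elements of $Q(p,b)$ on the ground set $\mathbb Z_k$, where $b(R)=|V|\,|R|$ and $p(R)=\max\{\,|R|\,|\mathcal P|-e_A(\mathcal P):\mathcal P\text{ a subpartition of }V\,\}$ (the empty subpartition being allowed, so $p\ge 0$ and $p(\emptyset)=0$), and that $Q(p,b)$ is a g-polymatroid. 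The "only if" half is exactly the computation from the necessity part; the "if" half is the graph-theoretic core discussed below. Supermodularity of $p$ holds because $p(R)$ depends only on $|R|$, namely $p(R)=\psi(|R|)$ with $\psi(r)=\max_t\{\,tr-\min\{e_A(\mathcal P):|\mathcal P|=t\}\,\}$ convex, whence the g-polymatroid inequality $b(X)-p(Y)\ge b(X-Y)-p(Y-X)$ follows from the fact that every slope of $\psi$ is at most $|V|$. Granting the core claim, the remaining steps are polyhedral: since $\ell\le\ell'\le|V|\,\mathbf 1$ by \eqref{indivnecessary}, part~(\ref{QcapT}) of Theorem~\ref{gpm} shows $Q(p,b)\cap T(\ell,\ell')$ is a g-polymatroid $Q(p^{\ell'}_\ell,b^{\ell'}_\ell)$, its nonemptiness condition $p\le\ell'$ unpacking — via the identity $\min_R\{\ell'(R)+t\,|\mathbb Z_k\setminus R|\}=\sum_i\min\{\ell'(i),t\}$ — to exactly the second inequality of the statement and the other conditions following from \eqref{indivnecessary}; then, as $\alpha\le\beta$ by \eqref{totnecessary}, part~(\ref{QcapK}) of Theorem~\ref{gpm} applies, and its conditions $p^{\ell'}_\ell(\mathbb Z_k)\le\beta$ and $\alpha\le b^{\ell'}_\ell(\mathbb Z_k)$ unpack, using the same identity together with the formulas for $p^g_f,b^g_f$, to exactly the first inequality of the statement and to $\alpha\le\ell'(\mathbb Z_k)$ (which holds by \eqref{totnecessary}). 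Hence $Q(p,b)\cap T(\ell,\ell')\cap K(\alpha,\beta)\neq\emptyset$; by the integrality assertions in Theorem~\ref{gpm} it contains an integral $c$, which is achievable by the core claim, finishing the proof.

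The hard part will be the "if" half of the core claim: given an integral $c\in Q(p,b)$, construct root sets $S_i$ with $|S_i|=c_i$ such that every set $X$ with $d^-_A(X)<k$ is met by at least $k-d^-_A(X)$ of the $S_i$. I would attack this by uncrossing the family $\{X:d^-_A(X)<k\}$ — submodularity of $d^-_A$ reduces matters, requirement level by requirement level, to a laminar subfamily — and then assigning the root sets greedily from the leaves of the resulting laminar family upward, checking that the budget $\sum_i\min\{c_i,|\mathcal P|\}$ guaranteed by $c\in Q(p,b)$ is precisely what keeps the greedy assignment feasible; alternatively, one can add a new source $s_0$ joined to all of $V$ and recast the goal as a packing of $k$ spanning $s_0$-arborescences in which the out-degree of $s_0$ in the $i$-th arborescence is prescribed to be $c_i$, then invoke a matroid-rooted arborescence packing theorem with the partition matroid encoding the numbers $c_i$. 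Everything else is routine manipulation of the formulas of Theorem~\ref{gpm} together with the uncrossing identities \eqref{kvkjvk1}--\eqref{kvkjvk3}.
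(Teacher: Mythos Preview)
The paper does not prove Theorem~\ref{BFmain}; it is quoted from B\'erczi--Frank~\cite{BF3} as background (and is subsumed by Theorem~\ref{jjjnbbbnajkhypregdir}, also quoted without proof). So there is no in-paper argument to compare against, and I evaluate your proposal on its own.

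Your necessity argument is correct, and the polyhedral half of your sufficiency argument checks out cleanly: with $p(R)=\psi(|R|)$ convex and $b(R)=|V|\,|R|$, the pair $(p,b)$ is paramodular; the identity $\min_R\{\ell'(R)+t\,|\mathbb Z_k\setminus R|\}=\sum_i\min\{\ell'(i),t\}$ does reduce $p\le\ell'$ to the second displayed inequality; and the condition $p^{\ell'}_\ell(\mathbb Z_k)\le\beta$ unwinds exactly to the first, while $\alpha\le b^{\ell'}_\ell(\mathbb Z_k)=\ell'(\mathbb Z_k)$ is~\eqref{totnecessary}. That part is fine.

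The genuine gap is the ``if'' half of your core claim, which carries the entire weight of the theorem, and neither of your two sketches closes it. For the uncrossing/greedy route: stratifying $\{X:d^-_A(X)<k\}$ by deficiency level and uncrossing does not produce a single laminar family respecting the levels, because uncrossing two sets at levels $j_1,j_2$ can push $X\cap Y$ to a strictly higher level and $X\cup Y$ to a lower one; the bookkeeping needed to show that a greedy assignment from the leaves succeeds under exactly the budget $\sum_i\min\{c_i,|\mathcal P|\}$ is precisely the substance you are deferring. For the add-a-source route: after adding $s_0$ the constraint ``the $i$-th spanning $s_0$-arborescence uses exactly $c_i$ arcs leaving $s_0$'' is a degree constraint on \emph{arcs}, not a matroid constraint on the root multiset (which is just $\{s_0\}$); the matroid-rooted packing theorems in this paper (Theorems~\ref{jegdvvlmcblkcbkuhyper} and~\ref{jegdvvlmcblkcbkuhypermix}) put the matroid on the roots, not on a designated arc set, so they do not apply. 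Encoding the $c_i$ by a partition matroid on the arcs out of $s_0$ lands you in a packing problem of the same strength as Theorem~\ref{BFmain} itself. In short, your framework correctly isolates where the difficulty sits but does not resolve it; a complete proof would need either the supermodular-covering machinery of~\cite{BF3} or an independent construction of the sets $S_i$ from $c$.
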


It was mentioned in \cite{hopmarszig} that Theorem \ref{BFmain} can be generalized as follows to $h$-regular packings in dypergraphs.

\begin{thm}[Hoppenot, Martin, Szigeti \cite{hopmarszig}]\label{jjjnbbbnajkhypregdir}
Let $\mathcal{D} = (V, \mathcal{A})$ be a dypergraph, $h, k, \alpha, \beta \in \mathbb Z_+$ and $\ell, \ell': \mathbb{Z}_k \rightarrow  \mathbb{Z}_+$ such that~\eqref{totnecessary} and~\eqref{indivnecessary} hold. There exists an $h$-regular $(\ell, \ell')$-bordered $(\alpha, \beta)$-limited packing of $k$  hyperbranchings in $\mathcal{D}$ if and only if
\begin{alignat}{3}
	h|V| 	&	\ge \alpha, \label{hValpha}\\
	\beta - \ell(\mathbb{Z}_k) + \sum_{i=1}^k\min\{|\mathcal{P}|,\ell(i)\} + e_\mathcal{A}(\mathcal{P}) 	&	\ge h|\mathcal{P}| 	\qquad 	&&	\text{for every subpartition } \mathcal{P} \text{ of } V, \label{konfeiobfiueguye1mregdir}\\
	\sum_{i=1}^k\min\{|\mathcal{P}|,\ell'(i)\} + e_\mathcal{A}(\mathcal{P}) 	&	\ge h|\mathcal{P}| \qquad 
			&&\text{for every subpartition } \mathcal{P} \text{ of } V. \label{konfeiobfiueguye2mregdir}
	\end{alignat}
\end{thm}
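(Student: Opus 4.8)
The plan is to follow the argument of Bérczi and Frank for the digraph special case, Theorem~\ref{BFmain} (see~\cite{BF3}), adding the dypergraph and $h$-regular refinements as in~\cite{hopmarszig}: prove necessity by counting, and reduce sufficiency---via the theory of generalized polymatroids, Theorem~\ref{gpm}---to the case of prescribed root sets, which is then settled by the $h$-regular dypergraph extension of Edmonds' branching theorem, Theorem~\ref{edmondsbranchings}.

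For necessity I would take such a packing $\mathcal{F}_1,\dots,\mathcal{F}_k$, with root sets $S_1,\dots,S_k$ and vertex sets $U_1,\dots,U_k$, and trim each $\mathcal{F}_i$ to a branching $\mathcal{B}_i$ on $U_i$. As the packing is $h$-regular, $\alpha\le\sum_i|S_i|\le\sum_i|U_i|=h|V|$, which is~\eqref{hValpha}. Given a subpartition $\mathcal{P}$ of $V$, write $\mu_i$ for the number of members of $\mathcal{P}$ that $\mathcal{B}_i$ meets; since a member met by $\mathcal{B}_i$ but disjoint from $S_i$ is entered by $\mathcal{B}_i$, while at most $\min\{|\mathcal{P}|,|S_i|\}$ members can meet $S_i$, the branching $\mathcal{B}_i$ enters at least $\mu_i-\min\{|\mathcal{P}|,|S_i|\}$ members. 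Because every member of $\mathcal{P}$ is met by at least $h$ of the $\mathcal{B}_i$ (so $\sum_i\mu_i\ge h|\mathcal{P}|$) and the $\mathcal{F}_i$ are dyperedge-disjoint, summing over $i$ gives $e_\mathcal{A}(\mathcal{P})\ge h|\mathcal{P}|-\sum_i\min\{|\mathcal{P}|,|S_i|\}$. Then $\sum_i\min\{|\mathcal{P}|,|S_i|\}\le\sum_i\min\{|\mathcal{P}|,\ell'(i)\}$ gives~\eqref{konfeiobfiueguye2mregdir}, and $\sum_i\min\{|\mathcal{P}|,|S_i|\}=\sum_i|S_i|-\sum_i\max\{0,|S_i|-|\mathcal{P}|\}\le\beta-\ell(\mathbb{Z}_k)+\sum_i\min\{|\mathcal{P}|,\ell(i)\}$ gives~\eqref{konfeiobfiueguye1mregdir}.

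For sufficiency I would call a family $\mathcal{S}=\{S_1,\dots,S_k\}$ of subsets of $V$ \emph{admissible} if $\ell(i)\le|S_i|\le\ell'(i)$ for all $i$, $\alpha\le\sum_i|S_i|\le\beta$, $|\mathcal{S}_{\{v\}}|\le h$ for all $v\in V$, and $|\mathcal{S}_X|+d^-_\mathcal{A}(X)\ge h$ for all non-empty $X\subseteq V$. The existence of the desired packing is equivalent to the existence of an admissible family: the root sets of a packing form an admissible family (same computation as in necessity), and conversely an admissible family yields the packing by the $h$-regular dypergraph version of Theorem~\ref{edmondsbranchings}---which I would derive from Theorem~\ref{jegdvvlmcblkcbkuhyper} (or Theorem~\ref{hyperarborescencesmulti}) by adjoining a new source vertex joined to each $v$ by parallel arcs and controlling their use by the transversal matroid of the vertex--root-set incidences. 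It thus remains to deduce the existence of an admissible family from~\eqref{hValpha},~\eqref{konfeiobfiueguye1mregdir},~\eqref{konfeiobfiueguye2mregdir} and the hypotheses~\eqref{totnecessary}--\eqref{indivnecessary}. For that I would encode a candidate family by a $0/1$ matrix indexed by $\mathbb{Z}_k\times V$, with row sums $|S_i|$ and column sums $|\mathcal{S}_{\{v\}}|$: by Theorem~\ref{gpm} the row constraints (a box intersected with the slab cut out by $\alpha$ and $\beta$) form an integral g-polymatroid, nonempty by~\eqref{totnecessary}--\eqref{indivnecessary}; the column sums are capped at $h$; and the Edmonds-type inequalities, after the subpartitions in~\eqref{konfeiobfiueguye1mregdir}--\eqref{konfeiobfiueguye2mregdir} are uncrossed, become lower bounds fitting a second g-polymatroid, so a common integral point---realized as a degree-constrained bipartite subgraph, again via Theorem~\ref{gpm}---produces the admissible family.

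The hard part will be this last step: showing that the two separate ``for every subpartition'' conditions---one governed by $\ell$ and $\beta$, the other by $\ell'$---can be uncrossed (using properties~\eqref{kvkjvk1}--\eqref{kvkjvk3}) into a single supermodular lower bound and submodular upper bound with nonempty g-polymatroid, and that this is \emph{precisely} the condition Theorem~\ref{gpm} asks for in the admissible-family problem, so that the terms $\min\{|\mathcal{P}|,\ell(i)\}$, $\min\{|\mathcal{P}|,\ell'(i)\}$ and $\beta-\ell(\mathbb{Z}_k)$ all end up on the correct side. A further difficulty absent from Theorem~\ref{BFmain} is that, when $h<k$, the hyperbranchings---and the arborescences appearing after the source adjunction---need not span $V$, so the transversal-matroid reduction for the prescribed-root case has to be set up with care; this is exactly the point where the recent results of~\cite{hopmarszig} are invoked.
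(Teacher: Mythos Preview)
The paper does not contain a proof of Theorem~\ref{jjjnbbbnajkhypregdir}. It is stated with the attribution ``Hoppenot, Martin, Szigeti~\cite{hopmarszig}'' and used as a black box (notably in the proof of Theorem~\ref{kjvkch}), but no argument for it is given here; the proof lives in the cited preprint~\cite{hopmarszig}. So there is nothing in this paper to compare your proposal against.

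As for the proposal itself: your necessity computation is fine and is the standard one. Your sufficiency plan---find an admissible family of root sets via generalized polymatroids, then invoke an $h$-regular prescribed-root packing theorem---is a reasonable outline, and is in the spirit of how~\cite{BF3} handles the digraph case. But you have correctly flagged the two genuinely delicate points and not resolved either of them. First, the uncrossing step: you need to show that the two subpartition conditions \eqref{konfeiobfiueguye1mregdir}--\eqref{konfeiobfiueguye2mregdir} translate exactly into $p\le b$ for a suitable paramodular pair on the ground set $\mathbb{Z}_k\times V$ (or on $V$ after projecting), and that the terms $\beta-\ell(\mathbb{Z}_k)+\sum_i\min\{|\mathcal{P}|,\ell(i)\}$ and $\sum_i\min\{|\mathcal{P}|,\ell'(i)\}$ really arise as the border functions of the box-and-plank polymatroid you describe; this is not automatic and is where the work is. Second, the ``$h$-regular prescribed-root'' step when $h<k$: your transversal-matroid reduction sketch is plausible but needs to actually produce non-spanning hyperbranchings with the correct vertex sets, and you have not said how the vertex sets $U_i$ are determined. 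Until those two steps are written out, this is a program rather than a proof.
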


If $\mathcal{D}$ is a digraph and $h = k$, then Theorem \ref{jjjnbbbnajkhypregdir} reduces to Theorem \ref{BFmain}.
\medskip

The undirected counterpart of Theorem \ref{jjjnbbbnajkhypregdir} follows.

\begin{thm}[Hoppenot, Martin, Szigeti \cite{hopmarszig}]\label{jjjnbbbnajkhypregdirecece}
Let $\mathcal{G}=(V,\mathcal{E})$ be a hypergraph, $h, k, \alpha, \beta \in \mathbb Z_+$ and $\ell, \ell': \mathbb{Z}_k \rightarrow  \mathbb{Z}_+$ such that~\eqref{totnecessary} and~\eqref{indivnecessary} hold. There exists an $h$-regular $(\ell, \ell')$-bordered $(\alpha, \beta)$-limited packing of $k$ rooted hyperforests in $\mathcal{G}$ if and only if \eqref{hValpha} holds and
\begin{alignat}{3}
	\beta - \ell(\mathbb{Z}_k) + \sum_{i=1}^k\min\{|\mathcal{P}|,\ell(i)\} + e_\mathcal{E}(\mathcal{P}) 	&	\ge h|\mathcal{P}| 	\qquad 	&&	\text{for every partition } \mathcal{P} \text{ of } V, \label{konfeiobfiueguye1mreg}\\
	\sum_{i=1}^k\min\{|\mathcal{P}|,\ell'(i)\} + e_\mathcal{E}(\mathcal{P}) 	&	\ge h|\mathcal{P}| 
	\qquad 	&&	\text{for every partition } \mathcal{P} \text{ of } V. \label{konfeiobfiueguye2mreg}
	\end{alignat}
\end{thm}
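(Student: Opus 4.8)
The plan is to derive Theorem~\ref{jjjnbbbnajkhypregdirecece} from the directed Theorem~\ref{jjjnbbbnajkhypregdir} by an orientation argument.

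The first step is to reformulate the problem. I claim that $\mathcal{G}=(V,\mathcal{E})$ has an $h$-regular $(\ell,\ell')$-bordered $(\alpha,\beta)$-limited packing of $k$ rooted hyperforests if and only if $\mathcal{G}$ has an orientation $\mathcal{D}$ having an $h$-regular $(\ell,\ell')$-bordered $(\alpha,\beta)$-limited packing of $k$ hyperbranchings. For the forward direction, if $\mathcal{H}_1,\dots,\mathcal{H}_k$ are such rooted hyperforests with root sets $S_1,\dots,S_k$, then by definition each $\mathcal{H}_i$ has an orientation $\mathcal{D}_i$ that is an $S_i$-hyperbranching; since the $\mathcal{H}_i$ are hyperedge-disjoint, orienting every hyperedge that lies in some $\mathcal{H}_i$ as in the corresponding $\mathcal{D}_i$, and every other hyperedge arbitrarily, yields an orientation $\mathcal{D}$ of $\mathcal{G}$ in which $\mathcal{D}_1,\dots,\mathcal{D}_k$ form the required packing of hyperbranchings; as the root sets and vertex sets are unchanged, $h$-regularity, $(\ell,\ell')$-borderedness and $(\alpha,\beta)$-limitedness are preserved. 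For the converse, un-orienting all dyperedges of a packing of $k$ hyperbranchings in an orientation of $\mathcal{G}$ turns each member into a sub-hypergraph of $\mathcal{G}$ orientable to an $S_i$-hyperbranching, hence into a rooted $S_i$-hyperforest, again preserving all parameters.

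By this reformulation and Theorem~\ref{jjjnbbbnajkhypregdir}, $\mathcal{G}$ has the required packing of rooted hyperforests if and only if some orientation $\mathcal{D}$ of $\mathcal{G}$ satisfies~\eqref{hValpha},~\eqref{konfeiobfiueguye1mregdir} and~\eqref{konfeiobfiueguye2mregdir}; note that~\eqref{hValpha} does not depend on the orientation. One direction of the remaining equivalence is easy: in any orientation $\mathcal{D}$ of $\mathcal{G}$ and for any subpartition $\mathcal{P}$, a dyperedge entering a member of $\mathcal{P}$ comes from a hyperedge entering that same member, so $e_{\mathcal{A}}(\mathcal{P})\le e_{\mathcal{E}}(\mathcal{P})$; applied to partitions, this shows that the existence of such an orientation forces~\eqref{konfeiobfiueguye1mreg} and~\eqref{konfeiobfiueguye2mreg} for $\mathcal{G}$.

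The converse --- building, from~\eqref{hValpha},~\eqref{konfeiobfiueguye1mreg},~\eqref{konfeiobfiueguye2mreg} and~\eqref{totnecessary}--\eqref{indivnecessary}, an orientation $\mathcal{D}$ of $\mathcal{G}$ with $e_{\mathcal{A}}(\mathcal{P})=\sum_{X\in\mathcal{P}}d^-_{\mathcal{A}}(X)\ge \Phi(|\mathcal{P}|)$ for every subpartition $\mathcal{P}$, where $\Phi(|\mathcal{P}|):=\max\bigl\{h|\mathcal{P}|-\beta+\ell(\mathbb{Z}_k)-\sum_{i=1}^{k}\min\{|\mathcal{P}|,\ell(i)\},\ h|\mathcal{P}|-\sum_{i=1}^{k}\min\{|\mathcal{P}|,\ell'(i)\}\bigr\}$ --- is where the real work lies, and I would split it into two stages. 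In the first stage I would strengthen~\eqref{konfeiobfiueguye1mreg} and~\eqref{konfeiobfiueguye2mreg} from partitions to all subpartitions: each of the two expressions under the maximum equals $\sum_{j=1}^{|\mathcal{P}|}(h-c_j)$ plus a constant with $c_1\ge c_2\ge\cdots\ge 0$, hence is convex in $|\mathcal{P}|$ and nonpositive at $|\mathcal{P}|=0$ by~\eqref{totnecessary}; a convex function with nonpositive value at $0$ strictly increases at every argument where it is positive, and since passing from a subpartition $\mathcal{P}'$ with $\overline{\cup\mathcal{P}'}\neq\emptyset$ to the partition $\mathcal{P}'\cup\{\overline{\cup\mathcal{P}'}\}$ leaves $e_{\mathcal{E}}$ unchanged while adding one member, the partition inequalities yield the subpartition inequalities. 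In the second stage, with $e_{\mathcal{E}}(\mathcal{P})\ge\Phi(|\mathcal{P}|)$ available for all subpartitions, I would obtain the orientation by a hypergraph orientation theorem (or a direct uncrossing argument): since $e_{\mathcal{A}}(\mathcal{P})=\sum_{X\in\mathcal{P}}d^-_{\mathcal{A}}(X)$, what is needed is an orientation whose in-degree function covers the subpartition demand $\Phi(|\cdot|)$, and the only possible obstruction, a subpartition $\mathcal{P}$ with $e_{\mathcal{E}}(\mathcal{P})<\Phi(|\mathcal{P}|)$, has just been excluded. I expect the main obstacle to be carrying out this last stage rigorously --- in particular, packaging the two demands of~\eqref{konfeiobfiueguye1mregdir} and~\eqref{konfeiobfiueguye2mregdir} into a single orientation problem and checking that the relevant set function is intersecting-supermodular, so that the uncrossing succeeds.
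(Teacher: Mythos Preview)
The paper does not prove this theorem; it is quoted from \cite{hopmarszig} and used only as a black box (in the derivation of Theorem~\ref{kjvkchnon}). So there is no proof here to compare your proposal against.

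On the merits of the proposal itself: the reformulation via orientation and the necessity direction are correct, and your first stage---lifting \eqref{konfeiobfiueguye1mreg} and \eqref{konfeiobfiueguye2mreg} from partitions to all subpartitions of $V$ for $e_{\mathcal{E}}$, using convexity of $\Phi$ in $|\mathcal{P}|$ together with $e_{\mathcal{E}}(\mathcal{P}')=e_{\mathcal{E}}(\mathcal{P}'\cup\{\overline{\cup\mathcal{P}'}\})$---is valid.

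The gap is your second stage, and it is a real one. You need an orientation $\mathcal{D}$ with $e_{\mathcal{A}}(\mathcal{P})\ge\Phi(|\mathcal{P}|)$ for every \emph{subpartition}, but you only have $e_{\mathcal{E}}(\mathcal{P})\ge\Phi(|\mathcal{P}|)$. For partitions this is free, since $e_{\mathcal{A}}(\mathcal{P})=e_{\mathcal{E}}(\mathcal{P})$ in any orientation; but for a proper subpartition $\mathcal{P}'$ one has
\[
e_{\mathcal{A}}(\mathcal{P}') \;=\; e_{\mathcal{E}}(\mathcal{P}') - d^-_{\mathcal{A}}\bigl(\overline{\cup\mathcal{P}'}\bigr),
\]
so the choice of orientation genuinely matters, and the convexity trick that worked for $e_{\mathcal{E}}$ does not transfer to $e_{\mathcal{A}}$. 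Your suggestion to invoke an orientation theorem for an intersecting-supermodular demand does not obviously apply: the requirement $\Phi(|\mathcal{P}|)$ is a bound on the sum $\sum_{X\in\mathcal{P}}d^-_{\mathcal{A}}(X)$, not a per-set lower bound $d^-_{\mathcal{A}}(X)\ge p(X)$, and you have not exhibited any set function $p$ on $V$ whose coverage by $d^-_{\mathcal{A}}$ is equivalent to \eqref{konfeiobfiueguye1mregdir} and \eqref{konfeiobfiueguye2mregdir}. You correctly flag this as the obstacle, but leaving it unresolved leaves exactly the nontrivial content of the theorem unproved.
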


We mention that the natural extension of Theorems \ref{jjjnbbbnajkhypregdir} and \ref{jjjnbbbnajkhypregdirecece} to mixed hypergraphs does not hold, see  \cite{hopmarszig}.
\medskip

 We need to present the following result in order to deduce Corollary \ref{jfxghvbjkmix} of it. 

\begin{thm}[Fortier et al. \cite{FKLSzT}]\label{iuviumixhyp}
Let $\mathcal{F}=(V,\mathcal{E}\cup\mathcal{A})$  be a mixed hypergraph and ${\cal S}=\{S_1,\dots,S_k\}$ a family of subsets in $V$. There exists a packing of spanning mixed  hyperbranchings with root sets $S_1,\dots,S_k$  in $\mathcal{F}$ if and only if 
	\begin{eqnarray*}  
		e_{\mathcal{E}\cup\mathcal{A}}(\mathcal{P}) +\sum_{X\in\mathcal{P}}|\mathcal{S}_{X}|	&	\geq 		& 	
		k |\mathcal{P}|	\hskip .44truecm\text{ for every subpartition } \mathcal{P} \text{ of } V.
	\end{eqnarray*}
\end{thm}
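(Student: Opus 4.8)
The plan is to prove necessity by a short reachability argument, and sufficiency by reducing the mixed hypergraph to a digraph, where the assertion is exactly Edmonds' Theorem~\ref{edmondsbranchings}. Observe first that for a digraph $D=(V,A)$ the displayed inequality for subpartitions is equivalent to ``$|\mathcal{S}_X|+d^-_A(X)\ge k$ for every non-empty $X\subseteq V$'': every hyperedge and dyperedge is an arc, and since the members of a subpartition $\mathcal{P}$ are pairwise disjoint each arc enters at most one member, so $e_A(\mathcal{P})=\sum_{X\in\mathcal{P}}d^-_A(X)$.

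\emph{Necessity.} Let $\{\mathcal{B}_i\}_{i\in\mathbb{Z}_k}$ be a packing of spanning mixed hyperbranchings with root sets $S_1,\dots,S_k$, and for each $i$ fix an orientation and a trimming turning $\mathcal{B}_i$ into a spanning $S_i$-branching $B_i$ of a digraph on $V$. Let $\mathcal{P}$ be a subpartition of $V$ and $X\in\mathcal{P}$. For every $i$ with $S_i\cap X=\emptyset$, each vertex of $X$ is reachable from $S_i$ in $B_i$, so $B_i$ contains an arc entering $X$, obtained by trimming some hyperedge or dyperedge of $\mathcal{B}_i$ that enters $X$. For a fixed $i$ these hyperedges/dyperedges are pairwise distinct over the members $X\in\mathcal{P}$ with $S_i\cap X=\emptyset$ (their trimmed arcs have heads in distinct members of $\mathcal{P}$), and they are distinct over distinct $i$ since the packing is hyperedge- and dyperedge-disjoint. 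Summing, $e_{\mathcal{E}\cup\mathcal{A}}(\mathcal{P})\ge\sum_i|\{X\in\mathcal{P}:S_i\cap X=\emptyset\}|=\sum_{X\in\mathcal{P}}(k-|\mathcal{S}_X|)=k|\mathcal{P}|-\sum_{X\in\mathcal{P}}|\mathcal{S}_X|$.

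\emph{Sufficiency.} Assume the condition holds. I would first orient the hyperedges: I claim there is an orientation of every $Z\in\mathcal{E}$ into a dyperedge so that the resulting dihypergraph $\mathcal{D}'=(V,\mathcal{A}')$ satisfies $|\mathcal{S}_X|+d^-_{\mathcal{A}'}(X)\ge k$ for every non-empty $X\subseteq V$; a packing of spanning hyperbranchings with root sets $S_1,\dots,S_k$ in $\mathcal{D}'$ is then such a packing in $\mathcal{F}$. With $m(X):=k-|\mathcal{S}_X|-d^-_{\mathcal{A}}(X)$, this is the problem of orienting $\mathcal{E}$ so that the in-degree of the new arcs dominates $m$ everywhere. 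Now $X\mapsto d^-_{\mathcal{A}}(X)$ is submodular, and $X\mapsto|\mathcal{S}_X|$ is submodular too (it is the sum over $i\in\mathbb{Z}_k$ of the set functions taking value $1$ on the sets meeting $S_i$ and $0$ elsewhere, each of which is submodular), so $m$ is supermodular; the orientation theorem of Gao~\cite{gao} used to derive Theorem~\ref{jegdvvlmcblkcbkuhypermix} then yields such an orientation precisely when, for every subpartition $\mathcal{P}$, the number of hyperedges of $\mathcal{E}$ entering some member of $\mathcal{P}$ is at least $\sum_{X\in\mathcal{P}}m(X)$; since $\sum_{X\in\mathcal{P}}d^-_{\mathcal{A}}(X)$ equals the number of dyperedges of $\mathcal{A}$ entering some member of $\mathcal{P}$, this says exactly $e_{\mathcal{E}\cup\mathcal{A}}(\mathcal{P})+\sum_{X\in\mathcal{P}}|\mathcal{S}_X|\ge k|\mathcal{P}|$, our hypothesis. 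Second, I would trim $\mathcal{D}'$ to a digraph: repeatedly replace a dyperedge $Zz$ by an arc $yz$ with $y\in Z$, each time choosing $y$ so that $|\mathcal{S}_X|+d^-(X)\ge k$ is preserved for all non-empty $X$. Such a $y$ exists by the standard uncrossing argument of the Frank--Kir\'aly--Kir\'aly trimming method: the family of sets on which this inequality is currently tight is closed under intersection and union (by submodularity of the in-degree function together with the inequality), so at most one direction of $Zz$ is forbidden. The result is a digraph $D=(V,A)$ with $|\mathcal{S}_X|+d^-_A(X)\ge k$ for every non-empty $X$, so Theorem~\ref{edmondsbranchings} gives a packing of $k$ spanning branchings in $D$ with root sets $S_1,\dots,S_k$; reversing the trimming and the orientation converts it into the desired packing of spanning mixed hyperbranchings in $\mathcal{F}$.

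\emph{Main obstacle.} The crux is the orientation step, and the reason it is delicate is that the hypothesis is a genuine \emph{subpartition} statement (as in Theorems~\ref{gaoyang2} and~\ref{hsz}): in a mixed hypergraph one hyperedge or dyperedge may enter several members of a subpartition, so $e_{\mathcal{E}\cup\mathcal{A}}(\mathcal{P})\ne\sum_{X\in\mathcal{P}}d^-(X)$ and the condition does not collapse to single sets. Hence proving the orientation result from scratch forces the uncrossing to be run at the level of subpartitions, via the operations $\mathcal{P}_1\sqcap\mathcal{P}_2$, $\mathcal{P}_1\sqcup\mathcal{P}_2$ and properties~\eqref{kvkjvk1}--\eqref{kvkjvk3}, while tracking how $|\mathcal{P}|$ and $e_{\mathcal{E}\cup\mathcal{A}}(\mathcal{P})$ vary under them; this bookkeeping is where the real work lies. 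If Gao's orientation result is invoked as a black box, only the comparatively routine trimming step remains.
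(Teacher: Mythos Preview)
The paper does not give a proof of Theorem~\ref{iuviumixhyp}; it is quoted from Fortier et al.~\cite{FKLSzT} as a known result and is used only to derive Corollary~\ref{jfxghvbjkmix}. There is therefore no proof in the present paper to compare your attempt against.

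That said, your outline is a sound route to the result and matches the spirit of how such statements are proved in~\cite{fkiki,FKLSzT}. Necessity is correct as written. For sufficiency, two small points. First, after setting $m(\emptyset)=0$ the function $m(X)=k-|\mathcal{S}_X|-d^-_\mathcal{A}(X)$ is only \emph{intersecting} supermodular (for disjoint $X,Y$ the inequality fails), so be sure the orientation theorem you invoke handles that case; the Frank-type hypergraph orientation results do, and your equivalence with the subpartition hypothesis is correct since every dyperedge has a single head and hence enters at most one member of a subpartition. Second, the phrase ``at most one direction of $Zz$ is forbidden'' is misleading for a dyperedge with several tails; what the uncrossing gives is that the tight sets containing the head $z$ form a lattice, and from this one extracts at least one good tail $y\in Z$. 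Alternatively you can skip the explicit trimming altogether: once $|\mathcal{S}_X|+d^-_{\mathcal{A}'}(X)\ge k$ holds in the oriented dypergraph, adding a fresh vertex $s_i$ with arcs to every $v\in S_i$ for each $i$ reduces the problem to an instance of Theorem~\ref{hyperarborescencesmulti} directly.
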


If $\mathcal{F}$ is digraph, then Theorem \ref{iuviumixhyp} reduces to Theorem \ref{edmondsbranchings}.
 \medskip

The following result for digraphs was observed by Frank, we present it for dypergraphs to apply it later in the  proof of Theorem \ref{iycivsxcomgen}.
  
\begin{coro}\label{jfxghvbjkmix}
Let $\mathcal{F}=(V+s,\mathcal{E}\cup\mathcal{A})$  be a mixed hypergraph such that only arcs leave $s$ and let $F$ be a set of arcs in $\mathcal{A}$ leaving $s.$ There exists a packing of   $|F|$ spanning mixed  $s$-hyperarborescences in $\mathcal{F}$ each containing  an arc of $F$ if and only if 
	\begin{eqnarray*}
		e_{\mathcal{E}\cup\mathcal{A}}(\mathcal{P})	&	\geq	 	&	|F||\mathcal{P}| 
			\hskip .44truecm\text{ for every  subpartition } \mathcal{P} \text{ of } V.
	\end{eqnarray*}
\end{coro}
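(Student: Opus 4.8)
The plan is to reduce Corollary~\ref{jfxghvbjkmix} to Theorem~\ref{iuviumixhyp} by deleting the arcs of $F$ from $\mathcal{F}$ and recording the requirement ``contains an arc of $F$'' inside the root sets. Write $k=|F|$ and $F=\{e_1,\dots,e_k\}$ with $e_i=sv_i$; since only arcs leave $s$, each $v_i$ lies in $V$. Put $\mathcal{F}^-=(V+s,\,\mathcal{E}\cup(\mathcal{A}\setminus F))$ and let $\mathcal{S}=\{S_1,\dots,S_k\}$ be the family with $S_i=\{s,v_i\}$.

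\textbf{Step 1 (equivalence of the two packing problems).} I would first show that $\mathcal{F}$ admits a packing of $|F|$ spanning mixed $s$-hyperarborescences each containing an arc of $F$ if and only if $\mathcal{F}^-$ admits a packing of $k$ spanning mixed hyperbranchings with root sets $S_1,\dots,S_k$. For the forward direction, note that since the arborescences are pairwise hyperedge- and dyperedge-disjoint and $|F|=k$, each contains exactly one arc of $F$; after reindexing, let $\mathcal{B}_i$ be the one containing $e_i$. In an orientation of $\mathcal{B}_i$ witnessing that it is a spanning $s$-hyperarborescence, the unique edge entering $v_i$ is the arc $e_i$ itself, so deleting $e_i$ drops the in-degree of $v_i$ from $1$ to $0$ and leaves every other in-degree unchanged; hence $\mathcal{B}_i-e_i$ is a spanning mixed hyperbranching of $\mathcal{F}^-$ with root set exactly $\{s,v_i\}$ (the size identity $|S_i|+|\mathcal{B}_i-e_i|=|V+s|$ holding automatically). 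Conversely, adding $e_i=sv_i$ to any spanning mixed hyperbranching of $\mathcal{F}^-$ with root set $\{s,v_i\}$ restores the in-degree of $v_i$ to $1$ and merges its tree with that of $s$, producing a spanning mixed $s$-hyperarborescence of $\mathcal{F}$ containing $e_i\in F$. Hyperedge- and dyperedge-disjointness is preserved in both directions.

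\textbf{Step 2 (translating the subpartition inequality).} Next I would prove that the condition of Theorem~\ref{iuviumixhyp} applied to $\mathcal{F}^-$ and $\mathcal{S}$, namely
\[
	e_{\mathcal{E}\cup(\mathcal{A}\setminus F)}(\mathcal{P})+\sum_{X\in\mathcal{P}}|\mathcal{S}_X|\ \ge\ k|\mathcal{P}|\qquad\text{for every subpartition }\mathcal{P}\text{ of }V+s,
\]
is equivalent to $e_{\mathcal{E}\cup\mathcal{A}}(\mathcal{P})\ge|F|\,|\mathcal{P}|$ for every subpartition $\mathcal{P}$ of $V$. The key facts are that $|\mathcal{S}_X|=k$ when $s\in X$ and $|\mathcal{S}_X|=|\{i:v_i\in X\}|$ otherwise, and that an arc $e_i\in F$ enters a member of a subpartition $\mathcal{P}$ of $V$ precisely when $v_i\in\cup\mathcal{P}$ (since $s\notin\cup\mathcal{P}$); together these give the exact identity $e_{\mathcal{E}\cup\mathcal{A}}(\mathcal{P})=e_{\mathcal{E}\cup(\mathcal{A}\setminus F)}(\mathcal{P})+\sum_{X\in\mathcal{P}}|\mathcal{S}_X|$ for every subpartition $\mathcal{P}$ of $V$. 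This settles the necessity of the claimed inequality and its sufficiency for subpartitions of $V+s$ that avoid $s$. For a subpartition $\mathcal{P}$ of $V+s$ with a member $X_0\ni s$, I would delete $X_0$ to obtain a subpartition $\mathcal{P}'$ of $V$, and use that $X_0$ already contributes $k$ to $\sum_{X\in\mathcal{P}}|\mathcal{S}_X|$ while no arc of $F$ enters $X_0$ (impossible when $s\in X_0$), together with $e_{\mathcal{E}\cup(\mathcal{A}\setminus F)}(\mathcal{P})\ge e_{\mathcal{E}\cup(\mathcal{A}\setminus F)}(\mathcal{P}')$; the identity on $\mathcal{P}'$ then yields $e_{\mathcal{E}\cup(\mathcal{A}\setminus F)}(\mathcal{P})+\sum_{X\in\mathcal{P}}|\mathcal{S}_X|\ge e_{\mathcal{E}\cup\mathcal{A}}(\mathcal{P}')+k\ge k|\mathcal{P}'|+k=k|\mathcal{P}|$. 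Combining Steps~1 and~2 with Theorem~\ref{iuviumixhyp} (used in both directions) proves the corollary.

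The delicate point is Step~2: one must keep track of the ``hidden'' arcs of $F$ consistently on the two sides of the inequality, and in particular verify that removing the part $X_0$ containing $s$ neither discards a contribution that is needed nor is obstructed by an arc of $F$ entering $X_0$. Once this bookkeeping is done, the arborescence/branching surgery of Step~1 is routine, and no further obstacle remains.
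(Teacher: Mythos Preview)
Your proof is correct and follows exactly the route the paper indicates: the paper's entire argument is the single sentence ``Corollary~\ref{jfxghvbjkmix} follows from Theorem~\ref{iuviumixhyp} by applying it for $\mathcal{S}=\{\{s,v\}: sv\in F\}$,'' and your Steps~1--2 are a careful unpacking of precisely that application (the deletion of $F$ being the natural way to make the bijection in Step~1 exact). There is no substantive difference in method.
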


Corollary \ref{jfxghvbjkmix} follows from Theorem \ref{iuviumixhyp} by applying it for $\mathcal{S}=\{\{s,v\}: sv\in F\}.$

\section{Augmentation problems}

This section contains the augmentation versions of the results presented in Section \ref{Packingproblems}.
\medskip

The first results on arborescence packing augmentation   appeared in Frank \cite{book}.

\begin{thm}[Frank \cite{book}]\label{edmondsarborescencesaug} 
Let $D=(V,A)$ be a digraph, $k,\gamma\in \mathbb{Z}_+,$ and $S=\{s_1,\dots,s_k\}$ a multiset of vertices in $V$. 
We can add $\gamma$ arcs to  $D$ to have a packing of 

(a) $k$ spanning arborescences with roots $s_1,\dots,s_k$  if and only if  
	\begin{eqnarray*}
		 \gamma+e_A(\mathcal{P})+\sum_{X\in\mathcal{P}}|S_X|	&	\ge 	&	 k|\mathcal{P}|
		 \hskip .5truecm \text{ for every subpartition $\mathcal{P}$ of $V.$}
	\end{eqnarray*}

(b) $k$ spanning arborescences  if and only if 
	\begin{eqnarray*} 
		\gamma+e_A({\cal P})+k  &	\geq  &	k|{\cal P}| \hskip 1.6truecm \text{ for every subpartition $\mathcal{P}$ of $V.$}
	\end{eqnarray*}
\end{thm}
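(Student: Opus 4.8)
The plan is to treat (a) and (b) uniformly by writing, for a subpartition $\mathcal{P}$ of $V$, $r(\mathcal{P})=\sum_{X\in\mathcal{P}}|S_X|$ in case (a) and $r(\mathcal{P})=k$ in case (b). First I would record the following reformulation of the goal: for every digraph $D'=(V,A')$ with $A\subseteq A'$, the digraph $D'$ admits the required packing of $k$ spanning arborescences if and only if $e_{A'}(\mathcal{P})+r(\mathcal{P})\ge k|\mathcal{P}|$ for every subpartition $\mathcal{P}$ of $V$. In case (b) this is exactly Theorem \ref{frankarborescences}; in case (a) it is Theorem \ref{frankarborescencesroots} applied with $f(v)=g(v)=|S_v|$ (which forces the root multiset to be $S$), using $\min\{k-|S_{\overline{\cup\mathcal{P}}}|,|S_{\cup\mathcal{P}}|\}=|S_{\cup\mathcal{P}}|=\sum_{X\in\mathcal{P}}|S_X|$, and it is equivalent to Theorem \ref{edmondsarborescencesmulti}. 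Necessity of the stated conditions is then immediate: adding $\gamma$ arcs to $D$ can increase $e_A(\mathcal{P})$ by at most $\gamma$, so $k|\mathcal{P}|\le e_{A'}(\mathcal{P})+r(\mathcal{P})\le\gamma+e_A(\mathcal{P})+r(\mathcal{P})$.

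For sufficiency I would induct on $\gamma$, adding one arc at a time (allowing parallel arcs to be added, and assuming $|V|\ge 2$, the case $|V|=1$ being trivial). For $\gamma=0$ the hypothesis is precisely the reformulated condition, so $D$ itself has the packing and nothing is added. For $\gamma\ge 1$, set $d(\mathcal{P})=k|\mathcal{P}|-e_A(\mathcal{P})-r(\mathcal{P})$, so the hypothesis reads $d(\mathcal{P})\le\gamma$ for all subpartitions $\mathcal{P}$; call $\mathcal{P}$ \emph{tight} if $d(\mathcal{P})=\gamma$. Adding one arc $uv$ changes $d(\mathcal{P})$ to $d(\mathcal{P})-1$ when $uv$ enters a member of $\mathcal{P}$ and leaves it unchanged otherwise. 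Hence it is enough to produce a single arc $uv$ (with $u,v\in V$) entering a member of \emph{every} tight subpartition: then $D+uv$ satisfies the hypothesis with $\gamma$ replaced by $\gamma-1$, and the inductive hypothesis supplies the remaining $\gamma-1$ arcs. If there is no tight subpartition, any arc works.

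When tight subpartitions exist I would locate the arc by an uncrossing argument. The standard inequality $e_A(\mathcal{P}_1)+e_A(\mathcal{P}_2)\ge e_A(\mathcal{P}_1\sqcap\mathcal{P}_2)+e_A(\mathcal{P}_1\sqcup\mathcal{P}_2)$ (a consequence of the submodularity of $d^-_A$ and the uncrossing properties recalled in Section \ref{secdef}), together with $r(\mathcal{P}_1)+r(\mathcal{P}_2)=r(\mathcal{P}_1\sqcap\mathcal{P}_2)+r(\mathcal{P}_1\sqcup\mathcal{P}_2)$ (because $X\mapsto|S_X|$ is modular, resp. $r$ is constant) and identity \eqref{kvkjvk3}, shows that $d$ is submodular with respect to $\sqcap$ and $\sqcup$. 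Therefore the tight subpartitions are closed under $\sqcap$ and $\sqcup$, so by \eqref{kvkjvk1} and \eqref{kvkjvk2} there are a minimal tight subpartition $\mathcal{P}_*$ with $\cup\mathcal{P}_*=\bigcap\{\cup\mathcal{P}:\mathcal{P}\text{ tight}\}$ and a maximal tight subpartition $\mathcal{P}^*$ such that every member of every tight subpartition is contained in a member of $\mathcal{P}^*$. Since $\gamma\ge 1$ while $d(\emptyset)\le 0$ and $d(\{V\})=0$, neither $\emptyset$ nor $\{V\}$ is tight, hence $\mathcal{P}_*\neq\emptyset$ and $V\notin\mathcal{P}^*$. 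Now pick $v\in\cup\mathcal{P}_*$, let $X^*$ be the member of $\mathcal{P}^*$ containing $v$ (thus $X^*\subsetneq V$), and pick $u\in V\setminus X^*$. For every tight $\mathcal{P}$, the member $X_{\mathcal{P}}$ of $\mathcal{P}$ containing $v$ satisfies $X_{\mathcal{P}}\subseteq X^*$, so $u\notin X_{\mathcal{P}}$ and $v\in X_{\mathcal{P}}$, and thus $uv$ enters $X_{\mathcal{P}}\in\mathcal{P}$; this $uv$ is the arc we need.

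The main obstacle I foresee is the third paragraph: verifying that the tight subpartitions are closed under $\sqcap$ and $\sqcup$ — which rests on the uncrossing inequality for $e_A$ — and then pinning down the single arc that meets them all. Everything else is bookkeeping on top of the packing characterisations of Section \ref{Packingproblems} and the trivial estimate $e_{A'}(\mathcal{P})\le e_A(\mathcal{P})+\gamma$; in particular this scheme handles (a) and (b) at once through the single parameter $r(\mathcal{P})$, so no separate reduction of one part to the other is needed.
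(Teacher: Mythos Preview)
Your argument is correct and takes a genuinely different route from the paper. The paper does not prove Theorem~\ref{edmondsarborescencesaug} directly; it derives both parts as specialisations of the far more general Theorem~\ref{iycivsxcomgen}, whose proof goes through generalized polymatroids: one shows that the set function $\hat p(Z)=\max\{h|\mathcal{P}|-e_{\mathcal{E}\cup\mathcal{A}}(\mathcal{P}):\mathcal{P}\text{ subpartition of }Z\}$ is supermodular (via Claim~\ref{submodeep}), builds several $g$-polymatroids, and applies Theorem~\ref{gpm} to certify a feasible integral point. You instead run a bare-hands induction on $\gamma$: using the same uncrossing inequality \eqref{submodeepeq} together with \eqref{kvkjvk3} you show the deficiency $d$ is \emph{supermodular} (you wrote ``submodular''; the inequality you need and use is $d(\mathcal{P}_1)+d(\mathcal{P}_2)\le d(\mathcal{P}_1\sqcap\mathcal{P}_2)+d(\mathcal{P}_1\sqcup\mathcal{P}_2)$), so the tight family is a sublattice, and the meet and join of all tight subpartitions pin down a single arc hitting every tight subpartition. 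Your justification of \eqref{submodeepeq} as ``a consequence of the submodularity of $d^-_A$'' is a bit glib---it really is the combinatorial Claim~\ref{submodeep} of the paper---but the inequality itself is exactly what you need. The trade-off is clear: your proof is elementary, self-contained, and handles (a) and (b) simultaneously through the parameter $r(\mathcal{P})$, while the paper's route pays the overhead of the $g$-polymatroid machinery but in return obtains Theorem~\ref{edmondsarborescencesaug} as a trivial corollary of a vastly more general augmentation theorem covering matroid constraints, mixed hypergraphs, and indegree/size bounds on the added arcs.
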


If $\gamma=0$, then Theorems \ref{edmondsarborescencesaug}(a) and (b) reduce to Theorems \ref{edmondsarborescencesmulti} and \ref{frankarborescences}.

\subsection{Augmentation to have a packing of mixed hyperarborescences}

We now provide the first main result of the paper which is a slight extension of the augmentation version of Theorem \ref{bjhkcgckh}. 

\begin{thm}\label{iycivsxcomgen} 
Let $\mathcal{F}=(V,\mathcal{E}\cup\mathcal{A})$  be a mixed hypergraph, $f,g,f',g': V\rightarrow \mathbb Z_+$ functions, $h,\alpha,\beta,q,q'\in \mathbb{Z}_+,$ {$S$} a multiset of vertices in $V$, and {${\sf M}$} $=(S,r_{\sf M})$ a matroid. 
We can add an $(f',g')$-indegree-bounded  $(q,q')$-size-limited arc set $F$ to $\mathcal{F}$ to have an $h$-regular ${\sf M}$-independent-rooted $(f,g)$-bounded $(\alpha, \beta)$-limited packing of mixed hyperarborescences that contains $F$ if and only if \eqref{fghu} and \eqref{ellll}  hold and  for all $v\in V$, $X, Z\subseteq V$ and subpartition $\mathcal{P}$ of $Z$,
\begin{eqnarray}
	f'(v)					&	\le 	& 	g'(v),		\label{rq}\\
	q					&	\le 	& 	q',		\label{mmmm}\\
	\max\{f'(Z),q-g'(\overline Z)\}+\max\{f(Z),\max\{h,\alpha\}-r_{\sf M}(S_{\overline X})+ f(Z-X)-g_h(X-Z)\}	&\le	&	h|Z|,						\label{utrdciyvouicomgen}\\
	e_{{\cal E}\cup{\cal A}}({\cal P})+\min\{g'(Z),q'-f'(\overline Z)\}+\min\{\beta-f(\overline Z),r_{\sf M}(S_{X})-f(X-Z)+g_h(Z-X)\}	
	&	\ge 	& 	h|\mathcal{P}|.\label{uhvigcftfcomgen}\hskip .8truecm
\end{eqnarray}
\end{thm}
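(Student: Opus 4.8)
The plan is to reduce the augmentation problem to a packing problem without augmentation, to describe the feasible ``root profiles'' as integral points of generalized polymatroids, and to read off~\eqref{rq}--\eqref{uhvigcftfcomgen} from the non-emptiness criteria of Theorem~\ref{gpm}. For the reduction, suppose an $(f',g')$-indegree-bounded $(q,q')$-size-limited arc set $F$ has been added to $\mathcal{F}$, producing an $h$-regular ${\sf M}$-independent-rooted $(f,g)$-bounded $(\alpha,\beta)$-limited packing $\mathcal B$ of mixed hyperarborescences containing $F$. After orienting and trimming, deleting the arcs of $F$ turns $\mathcal B$ into a packing of mixed hyperbranchings in $\mathcal{F}$: a member rooted at $\rho$ that uses $t-1$ arcs of $F$ becomes a branching whose root set $R$ has size $t$, namely $\rho$ (the \emph{primary root}) together with the heads of those arcs. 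Conversely, from any packing of mixed hyperbranchings in $\mathcal{F}$ with chosen primary roots $\rho_i\in R_i$, inserting into the $i$-th branching the star of $|R_i|-1$ arcs from $\rho_i$ to $R_i-\rho_i$ yields a packing of mixed hyperarborescences containing exactly those arcs, whose in-degree at $v$ equals $|\{i:v\in R_i-\rho_i\}|$, whose size equals $\sum_i(|R_i|-1)$, and whose $h$-regularity and number of members are unchanged. Hence the augmentation exists if and only if $\mathcal{F}$ admits an $h$-regular packing of mixed hyperbranchings with root sets $R_i$ and primary roots $\rho_i$ for which, writing $x(v)=|\{i:\rho_i=v\}|$ and $y(v)=|\{i:v\in R_i-\rho_i\}|$, the vector $x$ is realizable as the root-count vector of an ${\sf M}$-independent-rooted $(f,g)$-bounded $(\alpha,\beta)$-limited packing while $f'\le y\le g'$ and $q\le y(V)\le q'$. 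Joining a new vertex $s$ to $V$, this is equivalent to $\mathcal{F}+s$ having an $h$-regular packing of mixed $s$-hyperarborescences whose in-degree vector from $s$ is $z=x+y$ of the above form.

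I would then translate this into the language of $g$-polymatroids. By items~\ref{TQ} and~\ref{QcapK} of Theorem~\ref{gpm}, the admissible $y$ form exactly the integral points of the $g$-polymatroid $T(f',g')\cap K(q,q')$, with bounding functions $\max\{f'(Z),q-g'(\overline Z)\}$ and $\min\{g'(Z),q'-f'(\overline Z)\}$, the very terms appearing in~\eqref{utrdciyvouicomgen} and~\eqref{uhvigcftfcomgen}. Since a multiset taking $x(v)$ copies of $v$ is independent in ${\sf M}$ iff $x(Y)\le r_{\sf M}(S_Y)$ for all $Y\subseteq V$, and since $h$-regularity and the $(\alpha,\beta)$-limit force $f\le x\le g_h$ and $\max\{h,\alpha\}\le x(V)\le\beta$, the admissible $x$ form the integral points of the $g$-polymatroid obtained from the polymatroid of the rank function $Y\mapsto r_{\sf M}(S_Y)$ by intersecting with $T(f,g_h)$ and $K(\max\{h,\alpha\},\beta)$ (items~\ref{QcapT} and~\ref{QcapK}); carrying out those intersections via the formulas of Theorem~\ref{gpm} produces precisely the expressions $\max\{f(Z),\max\{h,\alpha\}-r_{\sf M}(S_{\overline X})+f(Z-X)-g_h(X-Z)\}$ and $\min\{\beta-f(\overline Z),r_{\sf M}(S_X)-f(X-Z)+g_h(Z-X)\}$ of~\eqref{utrdciyvouicomgen}--\eqref{uhvigcftfcomgen} as lower and upper bounding functions. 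By item~\ref{sum} the vectors $z=x+y$ then range over the integral points of the sum $Q_1$ of these two $g$-polymatroids. On the other hand, the in-degree vectors $z$ from $s$ that admit the required packing of mixed $s$-hyperarborescences in $\mathcal{F}+s$ form the integral points of a $g$-polymatroid $Q_2=Q(p^*,b^*)$ with $b^*(Z)=h|Z|$ and $p^*(Z)=\max_{\mathcal P}\bigl(h|\mathcal P|-e_{\mathcal E\cup\mathcal A}(\mathcal P)\bigr)$ over subpartitions $\mathcal P$ of $Z$; this packing step is reached through Corollary~\ref{jfxghvbjkmix} and an orientation argument for the mixed case, in the spirit of Theorem~\ref{jegdvvlmcblkcbkuhypermix}. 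Finally, by item~\ref{QcapQ}, the augmentation exists iff $Q_1\cap Q_2\ne\emptyset$, which holds iff the lower bounding function of each of $Q_1,Q_2$ is dominated by the upper bounding function of the other; spelling this out gives exactly~\eqref{rq}--\eqref{uhvigcftfcomgen}, with~\eqref{fghu},~\eqref{rq},~\eqref{mmmm},~\eqref{ellll} being the non-emptiness conditions $f\le g_h$, $f'\le g'$, $q\le q'$, $\alpha\le\beta$ of the factors.

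The step I expect to be the main obstacle is showing that the relevant lower bounding function -- the ``new submodular function'' of the introduction, assembled from $\max_{\mathcal P}\bigl(h|\mathcal P|-e_{\mathcal E\cup\mathcal A}(\mathcal P)\bigr)$ over subpartitions together with the matroid rank and the bounds $f,g_h$ -- is indeed supermodular and defines a $g$-polymatroid. Proving this combines the uncrossing identities~\eqref{kvkjvk1}--\eqref{kvkjvk3} for subpartitions with the submodularity of $e_{\mathcal E\cup\mathcal A}$ (from the standard uncrossing of entering hyper- and dyperedges), the submodularity of $Y\mapsto r_{\sf M}(S_Y)$ and of $g_h$, and the supermodularity of $-f$; this is the computational heart. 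The remainder is bookkeeping. For necessity, \eqref{rq},~\eqref{mmmm},~\eqref{ellll} are immediate, \eqref{utrdciyvouicomgen} follows from $x(Z)+y(Z)\le h|Z|$ together with the lower estimates on $x(Z)$ and $y(Z)$ coming from the matroid and from the $f,g,\alpha,\beta,f',g',q,q'$ constraints, and \eqref{uhvigcftfcomgen} from the classical count that the hyper- and dyperedges and arcs entering the members of a subpartition $\mathcal P$ of $Z$, together with the members rooted in $\bigcup\mathcal P$, number at least $h|\mathcal P|$; for sufficiency one performs the $g$-polymatroid intersection above, extracts an integral point by item~\ref{QcapQ}, and invokes the packing result. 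Lastly, setting $f'\equiv 0$ and taking $g'$ and $q'$ large with $q=0$ makes the $y$-factor impose nothing and collapses~\eqref{rq}--\eqref{uhvigcftfcomgen} to~\eqref{ellll},~\eqref{utrdciyvouicomgenj},~\eqref{uhvigcftfcomgen}, recovering Theorem~\ref{bjhkcgckh}.
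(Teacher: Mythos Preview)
Your proposal is correct and follows essentially the same route as the paper: the paper sets $Q_1=T(f',g')\cap K(q,q')$, $Q_2=Q(-\infty_0,b_{\sf M})\cap K(\max\{h,\alpha\},\beta)\cap T(f,g_h)$, $Q_3=Q(\hat p,\infty_0)\cap T(0,h)$ with $\hat p(Z)=\max_{\mathcal P}\{h|\mathcal P|-e_{\mathcal E\cup\mathcal A}(\mathcal P)\}$, proves $\hat p$ supermodular via the uncrossing identities \eqref{kvkjvk1}--\eqref{kvkjvk3}, and then shows that the desired augmentation exists iff $(Q_1+Q_2)\cap Q_3$ contains an integral point, using Corollary~\ref{jfxghvbjkmix} for the sufficiency direction exactly as you outline. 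Two small clarifications: the ``new submodular function'' of the introduction is just $\hat p$ itself (its supermodularity is the sole new ingredient; the matroid, $f$, $g_h$ contributions are absorbed by the standard $g$-polymatroid operations of Theorem~\ref{gpm}), and Corollary~\ref{jfxghvbjkmix} already treats mixed hypergraphs, so no separate orientation argument is needed.
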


\begin{proof}
The theory of generalized polymatroids provides the tools to be applied in the proof. The discovery of the submodularity of the function $e_{\mathcal{E}\cup \mathcal{A}}$ on subpartitions  makes the application of generalized polymatroids  possible.

\begin{cl}\label{submodeep}
Let  $\mathcal{P}_1$ and $\mathcal{P}_2$ be subpartitions of $V$ and $X \in \mathcal{E}\cup \mathcal{A}$.
	\begin{itemize}
		\item[(a)] If $X$ enters an element of  $\mathcal{P}_1 \sqcap \mathcal{P}_2$ or an element of 
	$\mathcal{P}_1 \sqcup \mathcal{P}_2$, then it enters an element of  $\mathcal{P}_1$ or an element of $\mathcal{P}_2$.
		\item[(b)] If $X$  enters an element of  $\mathcal{P}_1 \sqcap \mathcal{P}_2$ and an element of 
				$\mathcal{P}_1 \sqcup \mathcal{P}_2$, then it enters an element of  
				$\mathcal{P}_1$ and an element of $\mathcal{P}_2$. 
		\item[(c)] Consequently, 
			\begin{equation}\label{submodeepeq}
				e_{\mathcal{E}\cup \mathcal{A}}(\mathcal{P}_1) + 
				e_{\mathcal{E}\cup \mathcal{A}}(\mathcal{P}_2) \ge 
				e_{\mathcal{E}\cup \mathcal{A}}(\mathcal{P}_1 \sqcap \mathcal{P}_2) + 
				e_{\mathcal{E}\cup \mathcal{A}}(\mathcal{P}_1 \sqcup \mathcal{P}_2).
			\end{equation}
	\end{itemize}
\end{cl}

\begin{proof}
We prove (a) and (b) together. 
\medskip

First suppose that $X\in \mathcal{E}.$ 
If $X$ enters a subset $Y$ of $V$, then there exist $x\in X \cap Y$  and $z\in X-Y$. 

Suppose that $Y \in \mathcal{P}_1 \sqcup \mathcal{P}_2$. Since $x\in Y\subseteq (\cup\mathcal{P}_1)\cup(\cup\mathcal{P}_2)$, there exist $U_1 \in \mathcal{P}_1$ or $U_2 \in \mathcal{P}_2$ containing $x$, say $U_1 \in \mathcal{P}_1$. By \eqref{kvkjvk2} and $x\in Y \cap U_1$, we have $U_1 \subseteq Y$. Since $x\in U_1\cap X$ and $z\in X-Y\subseteq X-U_1,$ we get that $X$ enters $U_1$.

Suppose now that  $Y \in \mathcal{P}_1 \sqcap \mathcal{P}_2.$ Since $x\in Y\subseteq (\cup\mathcal{P}_1)\cap(\cup\mathcal{P}_2)$, there exist $U'_1 \in \mathcal{P}_1$ and $U'_2 \in \mathcal{P}_2$ containing $x$. By \eqref{kvkjvk1} and $x\in U'_1 \cap U'_2\cap Y$, we have $U'_1 \cap U'_2 \subseteq Y$. Since $x\in U'_1 \cap U'_2\cap X$ and $z\in X-Y\subseteq X-(U'_1 \cap U'_2),$ we get that $X$ enters $U'_1 \cap U'_2$. Hence, $X$ enters either $U'_1$ or $U'_2$. Thus, the proof of (a) is complete.

Further, if $X$ enters say  $U'_1$  but not $U'_2$, then, since $x\in X\cap U'_2,$ we have $X\subseteq U'_2.$ Then, by \eqref{kvkjvk2}, $X$ does not enter any element of $\mathcal{P}_1 \sqcup \mathcal{P}_2.$ Hence, the proof of (b) is complete.
\medskip

The case for $X\in \mathcal{A}$ is the same except that $x$ is the head of $X$ and  $z$ is a tail of $X$ not  in $Y$.
\medskip

It is clear that (c)  follows from (a) and (b).
\end{proof}

We introduce a set function $\hat p$: for all $Z\subseteq V,$ 
$$\bm{\hat p}(Z)=\max\{h|\mathcal{P}|-e_{\mathcal{E}\cup\mathcal{A}}({\cal P}): \mathcal{P} \text{ subpartition of } Z\}.$$

\begin{cl}\label{psup}
The function $\hat p$ is supermodular.
\end{cl}

\begin{proof}
Let $X_1,X_2\subseteq V.$ For $i=1,2,$ let ${\cal P}_i$ be the subpartition of $X_i$ such that $\hat p(X_i)=h|\mathcal{P}_i|-e_{\mathcal{E}\cup\mathcal{A}}({\cal P}_i).$ Then, by \eqref{kvkjvk3}, \eqref{submodeepeq}, $\mathcal{P}_1\sqcap \mathcal{P}_2$ and $\mathcal{P}_1\sqcup \mathcal{P}_2$ are subpartitions of $X_1\cap X_2$ and $X_1\cup X_2$, we have 
\begin{eqnarray*}
	\hat p(X_1)+\hat p(X_2)	&	=	&	h|{\cal P}_1|-e_{{\cal E}\cup{\cal A}}({\cal P}_1)+h|{\cal P}_2|-e_{{\cal E}\cup{\cal A}}({\cal P}_2)\\
						&	\le 	&	h|{\cal P}_1\sqcap {\cal P}_2|-e_{{\cal E}\cup{\cal A}}({\cal P}_1\sqcap {\cal P}_2)+h|{\cal P}_1\sqcup {\cal P}_2|-e_{{\cal E}\cup{\cal A}}({\cal P}_1\sqcup {\cal P}_2)\\
						&	\le 	&	\hat p(X_1\cap X_2)+\hat p(X_1\cup X_2),
\end{eqnarray*}
 and the claim follows.
\end{proof}

We naturally consider the following $g$-polymatroids, where {\boldmath$b_{\sf M}$}$(X)=r_{\sf M}(S_X)$ for every $X\subseteq V,$
which  are well-defined because $b_{\sf M}$ is submodular and, by Claim \ref{psup}, $\hat p$ is supermodular.
\medskip

\hskip 1truecm {\boldmath$Q_1$} $=T(f',g')\cap K(q,q'),$ 

\hskip 1truecm {\boldmath$Q_2$} $=Q(-\infty_0,b_{\sf M})\cap K(\max\{h,\alpha\},\beta)\cap T(f,g_h)$, 

\hskip 1truecm {\boldmath$Q_3$} $=Q(\hat p,\infty_0)\cap T(0,h)$,    

\hskip 1truecm {\boldmath$Q_4$} $=(Q_1+Q_2)\cap Q_3.$

\begin{Lemma}\label{gpl2} The following hold.
	\begin{itemize}
		\item[(a)] An integral vector $m$ is in $Q_4$ if and only if there exist an  $(f',g')$-indegree-bounded $(q,q')$-size-limited  new arc set $F$  and an $h$-regular ${\sf M}$-independent-rooted  $(f,g)$-bounded $(\alpha,\beta)$-limited  packing of mixed hyperarborescences in $\mathcal{F}+F$ that contains $F$ with root set $R$ such that   $m(v)=d_F^-(v)+|R_v|$ for every $v\in V.$
		\item[(b)] $Q_4\neq\emptyset$ if and only if \eqref{fghu}, \eqref{rq}, \eqref{mmmm}, \eqref{ellll},   \eqref{utrdciyvouicomgen}, and \eqref{uhvigcftfcomgen} hold.
	\end{itemize}
\end{Lemma}

\begin{proof}
(a) To prove the {\bf necessity}, suppose  that there exist an $(f',g')$-indegree-bounded  $(q,q')$-size-limited new arc set {\boldmath$F$}  and an $h$-regular ${\sf M}$-independent-rooted $(f,g)$-bounded $(\alpha,\beta)$-limited packing {\boldmath$\mathcal{B}$} of mixed hyperarborescences in $\mathcal{F}+F$ that contains $F.$ Let {\boldmath$R$} be the root set of $\mathcal{B}$. Then we have the following.
\begin{alignat}{6}
	|R_X|	&	\le 	r_{\sf M}(S_X) 	&& =  b_{\sf M}(X)	\qquad &&\text{for every }X\subseteq V,	\label{cnmr1}\\
\max\{h,\alpha\}	&	\le 		|R|		&&	\le 	\beta,	\label{cn1}\\
		f(v)	&	\le 		|R_v|		&&	\le 	g(v) 		\qquad&&\text{for every } v\in V,\label{cn2}\\
		q	&	\le 		|F|		&&	\le 	q',		\label{cn3}\\
		f'(v)	&	\le 		d_F^-(v)	&&	\le 	g'(v)		\qquad&&\text{for every }  v\in V,\label{cn4}\\
		0	&	\le 	|R_v|+d_F^-(v)	&&	\le 	h,		\qquad&&\text{for every }  v\in V.\label{cn5}
\end{alignat}
Let  {\boldmath$m_1$}$(v)=d_F^-(v)$, {\boldmath$m_2$}$(v)=|R_v|$ for every $v\in V$ and {\boldmath$m$} $=m_1+m_2.$ Then, by \eqref{cnmr1}, \eqref{cn1}, \eqref{cn2}, and \eqref{cn5}, we have $m_2\in \mathbb{Z}\cap Q_2$, by \eqref{cn3} and \eqref{cn4}, we have $m_1\in \mathbb{Z}\cap Q_1,$ and, by \eqref{cn5}, we have $m\in T(0,h).$ Since $m=m_1+m_2,$ we have  $m\in \mathbb{Z}\cap (Q_1+Q_2)$. It remains  to show that $m\in Q(\hat p,\infty_0).$ 
By Theorem \ref{sibevhz2} applied for $f(v)=g(v)=|R_v|$ for every $v\in V$ and $\alpha=\beta=|R|,$ we get that, for every $\emptyset\neq X\subseteq V$, we have 
\begin{eqnarray*}
	\hat p(X)	&	=	&	\max\{h|{\cal P}|-e_{{\cal E}\cup{\cal A}}({\cal P}): {\cal P} \text{ subpartition of } X\}\\
	&	\le	&	\max\{h|{\cal P}|-e_{{\cal E}\cup{\cal A}\cup F}({\cal P})+\sum_{v\in X}d_F^-(v): {\cal P}\text{ subpartition of } X\}\\
	&	\le 	&	|R_{\cup{\cal P}}|+\sum_{v\in X}d_F^-(v)\le |R_X|+\sum_{v\in X}d_F^-(v)
		=		m_2(X)+m_1(X)
		=		m(X),
\end{eqnarray*}
so $m\in Q(\hat p,\infty_0).$ Thus, $m\in \mathbb{Z}\cap Q_4.$
\medskip
 
To prove the {\bf sufficiency}, now  suppose  that {\boldmath$m$} $\in \mathbb{Z}\cap Q_4.$  By $m\in \mathbb{Z}\cap (Q_1+Q_2)$ and Theorem \ref{gpm}.\ref{sum}, there exist {\boldmath$m_1$} $\in \mathbb{Z}\cap Q_1$ and {\boldmath$m_2$} $\in \mathbb{Z}\cap Q_2$ such that $m=m_1+m_2.$ Then, by $f,f'\ge 0$, we have $m_1,m_2\in \mathbb{Z}_+.$ Let the mixed hypergraph {\boldmath$\mathcal{F}_1$} be obtained from $\mathcal{F}$ by adding a new vertex {\boldmath$s$} and a new arc set {\boldmath$F_3$} $=F_1\cup F_2$, where for $i=1,2,$ {\boldmath$F_i$} is a new arc set  containing $m_i(v)$ arcs from $s$ to every  $v\in V.$ Then $d_{F_1}^-=m_1, d_{F_2}^-=m_2$ and $d_{F_3}^-=m.$

We first show that  $\mathcal{F}_1$ contains a packing of $h$ spanning mixed $s$-hyperarborescences, each containing at least one arc of $F_2$, and their union containing $F_3.$ By $d_{F_2}^-=m_2\in K(\max\{h,\alpha\},\beta),$   we have $|F_2|=m_2(V)\ge\ h.$ Thus, there exists a subset {\boldmath$F_4$} of $F_2$ of size $h.$ By $m\in Q_3, $ we have $e_{F_3}({\cal P})=d_{F_3}^-(\cup{\cal P})=m(\cup{\cal P})\ge \hat p(\cup{\cal P})\ge h|{\cal P}|-e_{{\cal E}\cup{\cal A}}({\cal P})=|F_4||{\cal P}|-e_{{\cal E}\cup{\cal A}}({\cal P})$ for every  subpartition $\mathcal{P}$ of $V.$ Then, by Corollary \ref{jfxghvbjkmix},  there exists a packing {\boldmath$\mathcal{B}^1$} of $h$ spanning mixed $s$-hyperarborescences in $\mathcal{F}_1$ each containing an arc of $F_4.$ By $m\in T(0,h)$, we have $d_{F_3}^-(v)=m(v)\le h$ for every $v\in V.$ We may hence modify $\mathcal{B}^1$ to obtain a packing {\boldmath$\mathcal{B}^2$} of $h$ spanning mixed $s$-hyperarborescences in $\mathcal{F}_1$ each containing an arc of $F_4$ and their union containing all the arcs  in $F_3.$ 

We now modify each spanning mixed $s$-hyperarborescence $B^2_i$ in $\mathcal{B}^2$ by deleting $s$ and adding an arc $vu$ for every arc $su\in F_1$ contained in $B_i^2$ where $sv$ is the unique arc of $F_4$ contained in  $B_i^2$. Let {\boldmath$F$} be the set of new arcs. This way we obtained an $h$-regular packing {\boldmath$\mathcal{B}^3$} of mixed hyperarborescences in $\mathcal{F}+F$ that contains $F.$ Note that the root set $R^3$ of the packing $\mathcal{B}^3$ satisfies $|R^3_v|=d_{F_2}^-(v)=m_2(v)$ for every $v\in V.$ Then, since $m_2\in T(f,g_h),$ $\mathcal{B}^3$ is $(f,g)$-bounded. By $m_2\in K(\max\{h,\alpha\},\beta),$   $\mathcal{B}^3$ is $(\alpha,\beta)$-limited. Further, by $m_2\in Q(-\infty_0,b_{\sf M})$ and Theorem 13.1.2  in \cite {book}, there exists an independent set {\boldmath$S^*$} in {\sf M} such that  for every $v\in V$, $|S^*_v|=m_2(v)=|R^3_v|$ and hence the $m_2(v)$ mixed $v$-hyperarborescences in $\mathcal{B}^3$ can be rooted at $S^*_v$. It follows that the packing $\mathcal{B}^3$ is {\sf M}-independent-rooted. Since $d_F^-(v)=d_{F_1}^-(v)=m_1(v)$ for every $v\in V$ and $m_1\in T(f',g')\cap K(q,q'),$ $F$ is $(f',g')$-indegree-bounded and $(q,q')$-size-limited, so the proof of (a) is completed.
\medskip

(b) By Theorem \ref{gpm}.\ref{TQ}  and \ref{gpm}.\ref{QcapK}, $Q_1=T(f',g')\cap K(q,q')\neq\emptyset$ if and only if $f'\le g'$ (which is \eqref{rq}), $q\le q'$ (which is \eqref{mmmm}), $g'(V)\ge q$  (which is implied by  \eqref{utrdciyvouicomgen} applied for $Z=\emptyset$) and $f'(V)\le q'$ (which is implied by  \eqref{uhvigcftfcomgen} applied for $X=Z={\cal P}=\emptyset$). If $Q_1\neq\emptyset,$ then, by Theorem \ref{gpm}.\ref{QcapK},  $Q_1=Q(p_1,b_1)$ where, for every  $X\subseteq V,$
\begin{eqnarray}
	\text{{\boldmath$p_1$}}(X)	&	=	&	\max\{f'(X), q-g'(\overline X)\},\label{txyuciy}\\
	\text{{\boldmath$b_1$}}(X)	&	=	&	\min\{g'(X), q'-f'(\overline X)\}.\label{ohuifytu}
\end{eqnarray}
By Theorem  \ref{gpm}.\ref{QcapK}, $Q'=Q(-\infty_0,b_{\sf M})\cap K(\max\{h,\alpha\},\beta)\neq\emptyset$ if and only if $\max\{h,\alpha\}\le b_{\sf M}(V)=r_{\sf M}(S)$ which is implied by  \eqref{utrdciyvouicomgen} applied for $X=\emptyset=Z.$ If $Q'\neq\emptyset,$ then by Theorem \ref{gpm}.\ref{QcapK}, $Q'=Q(p',b')$  where, for every  $X\subseteq V,$
\begin{eqnarray}
	\text{{\boldmath$p'$}}(X)	&	=	&	\max\{-\infty_0(X),\max\{h,\alpha\}-b_{\sf M}(\overline X)\}
		=\max\{h,\alpha\}-b_{\sf M}(\overline X) \text{ if } X\neq\emptyset \text{ and } 0\text{  if } X=\emptyset,\label{jhvjhvhj}\hskip .6truecm\\
	\text{{\boldmath$b'$}}(X)	&	=	&	\min\{b_{\sf M}(X),\beta+\infty_0(\overline X)\}=b_{\sf M}(X) \text{ if } X\neq V \text{ and } \min\{b_{\sf M}(V),\beta\} \text{  if } X=V.\label{jgxgfx}
\end{eqnarray}
By Theorem \ref{gpm}.\ref{QcapT}, $Q_2=Q'\cap T(f,g_h)=Q(p',b')\cap T(f,g_h)\neq\emptyset$ if and only if $\max\{h,\alpha\}\le b_{\sf M}(V)=r_{\sf M}(S)$, $f\le g_h$ which is \eqref{fghu}, $f(X)\le b'(X)$  for every $X\subseteq V$ which, by \eqref{jgxgfx}, is \eqref{uhvigcftfcomgen} applied for $Z=\emptyset=\mathcal{P}$ and $p'(X)\le g_h(X)$  for every $X\subseteq V$ which, by \eqref{jhvjhvhj},  is \eqref{utrdciyvouicomgen} applied for $Z=\emptyset$. If $Q_2\neq\emptyset$, then, by Theorem \ref{gpm}.\ref{QcapT}, $Q_2=Q(p_2,b_2)$ where, for every  $Z\subseteq V,$
\begin{eqnarray}
	\text{{\boldmath$p_2$}}(Z)	&	=	&	\max\{p'(X)-g_h(X-Z)+f(Z-X):X\subseteq V\},	\label{uyfohjvlj}\\
	\text{{\boldmath$b_2$}}(Z)	&	=	&	\min\ \{b'(X)-f(X-Z)+g_h(Z-X):X\subseteq V\}.	\label{oufiyfyu}
\end{eqnarray}
By Theorem \ref{gpm}.\ref{sum}, $Q_1+Q_2=Q(p_+,b_+)$ where, for every  $Z\subseteq V,$
\begin{eqnarray}
	\text{{\boldmath$p_+$}}(Z)		&	=	&	p_1(Z)+p_2(Z),		\label{yuyguyu}\\
	\text{{\boldmath$b_+$}}(Z)		&	=	&	b_1(Z)+b_2(Z).		\label{ouyfiufou}
\end{eqnarray}
As the vector $(h,\dots, h)\in Q(\hat p,\infty_0)\cap T(0,h)=Q_3,$  $Q_3\neq\emptyset.$ Then, by Theorem \ref{gpm}.\ref{QcapT}, $Q_3=Q(p_3,b_3)$ where, for every  $X\subseteq V,$
\begin{eqnarray}
	\text{{\boldmath$p_3$}}(X)	&	=	&	\hat p(X),\label{bfkjbf}\\
	\text{{\boldmath$b_3$}}(X)	&	=	&	h|X|.\label{lbkjebjlefjl}
\end{eqnarray}
By Theorem \ref{gpm}.\ref{QcapQ}, $Q_4\neq\emptyset$ if and only if the above conditions hold and $p_3\le b_+$ (which, by   \eqref{bfkjbf}, \eqref{ouyfiufou}, \eqref{ohuifytu},  \eqref{oufiyfyu}, is equivalent to \eqref{uhvigcftfcomgen}) and $p_+\le b_3$ (which, by \eqref{yuyguyu},   \eqref{txyuciy}, \eqref{uyfohjvlj},  and \eqref{lbkjebjlefjl}, is equivalent to \eqref{utrdciyvouicomgen}). By summarizing the above arguments we may conclude  that (b) holds.
\end{proof}

By Lemma \ref{gpl2}(a), there exists an $(f',g')$-indegree-bounded $(q,q')$-size-limited  new arc set $F$ such that $\mathcal{F}+F$ has an $(f,g)$-bounded ${\sf M}$-independent-rooted $(\alpha,\beta)$-limited $h$-regular packing of mixed hyperarborescences containing $F$ if and only if  $m=d_F^-+|R.|\in \mathbb{Z}\cap Q_4$. By Theorem \ref{gpm}.\ref{QcapQ}, this is equivalent to $Q_4\neq\emptyset$. This holds, by Lemma \ref{gpl2}(b), if and only if \eqref{fghu}, \eqref{rq}, \eqref{mmmm}, \eqref{ellll},   \eqref{utrdciyvouicomgen}, and \eqref{uhvigcftfcomgen} hold. This completes the proof of Theorem \ref{iycivsxcomgen}.
\end{proof}

If $f'=g'=q=q'=0$, then Theorem \ref{iycivsxcomgen} reduces to Theorem \ref{bjhkcgckh}. If $f(v)=|S_v|=g(v), f'(v)=0, g'(v)=\infty$ for every $v\in V$, $h=\alpha=\beta=k$, $q=0,q'=\gamma,$ {\sf M} is the free matroid on $h\times V$, then Theorem \ref{iycivsxcomgen} reduces to Theorem \ref{edmondsarborescencesaug}(a). If $f(v)=0, g(v)=k, f'(v)=0, g'(v)=\infty$ for every $v\in V$, $h=\alpha=\beta=k$, $q=0,q'=\gamma,$ {\sf M} is the free matroid on $h\times V$, then Theorem \ref{iycivsxcomgen} reduces to Theorem \ref{edmondsarborescencesaug}(b). If $q=0,q'=\gamma, f'(v)=0, g'(v)=\infty$ for every $v\in V$, then  Theorem \ref{iycivsxcomgen} implies Corollary \ref{iuiggiiu}, the augmentation version of Theorem \ref{bjhkcgckh}.

\begin{coro}\label{iuiggiiu} 
Let $\mathcal{F}=(V,\mathcal{E}\cup\mathcal{A})$  be a mixed hypergraph, $f,g: V\rightarrow \mathbb Z_+$ functions, $h,\alpha,\beta,\gamma\in \mathbb{Z}_+,$ {$S$} a multiset of vertices in $V$, and {${\sf M}$} $=(S,r_{\sf M})$ a matroid. 
We can add an arc set $F$ of size at most $\gamma$ to $\mathcal{F}$ to have an $h$-regular ${\sf M}$-independent-rooted $(f,g)$-bounded $(\alpha, \beta)$-limited packing of mixed hyperarborescences that contains $F$ if and only if \eqref{fghu}, \eqref{ellll}, and \eqref{utrdciyvouicomgenj} hold and  for all $X, Z\subseteq V$ and subpartition $\mathcal{P}$ of $Z$,
\begin{eqnarray}
		\beta			&	\ge 	& 	f(V),\\
	r_{\sf M}(S_{X})		&	\ge 	& 	f(X),\\
	\gamma+e_{{\cal E}\cup{\cal A}}({\cal P})+\min\{\beta-f(\overline Z),r_{\sf M}(S_{X})-f(X-Z)+g_h(Z-X)\}	
					&	\ge 	& 	h|\mathcal{P}|.\label{uhvigcftfcomgen}\hskip .8truecm
\end{eqnarray}
\end{coro}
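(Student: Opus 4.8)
The plan is to obtain Corollary~\ref{iuiggiiu} directly from Theorem~\ref{iycivsxcomgen} by specializing the parameters that control the new arc set: take $q=0$, $q'=\gamma$, and $f'(v)=0$, $g'(v)=\infty$ for every $v\in V$. Under these choices an $(f',g')$-indegree-bounded $(q,q')$-size-limited arc set is exactly an arc set of size at most $\gamma$ with no constraint on in-degrees, so the two statements are about the very same augmentation; what remains is to verify that, with this specialization, the list of conditions in Theorem~\ref{iycivsxcomgen} collapses to the list in Corollary~\ref{iuiggiiu}.

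First I would dispense with the trivial items: \eqref{rq} becomes $0\le\infty$ and \eqref{mmmm} becomes $0\le\gamma$, both automatic, while \eqref{fghu} and \eqref{ellll} are carried over verbatim. For \eqref{utrdciyvouicomgen} note that $g'(\overline Z)$ equals $0$ if $\overline Z=\emptyset$ and $+\infty$ otherwise, so $g'(\overline Z)\ge 0$ always and hence $\max\{f'(Z),q-g'(\overline Z)\}=\max\{0,-g'(\overline Z)\}=0$; thus the left-hand side of \eqref{utrdciyvouicomgen} reduces to $\max\{f(Z),\ \max\{h,\alpha\}-r_{\sf M}(S_{\overline X})+f(Z-X)-g_h(X-Z)\}$. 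Bounding this by $h|Z|$ is equivalent to the conjunction of $f(Z)\le h|Z|$ for all $Z\subseteq V$ and of \eqref{utrdciyvouicomgenj}; but \eqref{fghu} gives $f(v)\le g_h(v)\le h$ for every $v$, so the first part is automatic and only \eqref{utrdciyvouicomgenj} survives.

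The one place calling for a little care is \eqref{uhvigcftfcomgen}, because of the boundary case $Z=\emptyset$. With $f'\equiv 0$, $q'=\gamma$, $g'\equiv\infty$, the term $\min\{g'(Z),q'-f'(\overline Z)\}$ equals $\min\{\infty,\gamma\}=\gamma$ when $Z\neq\emptyset$ and $\min\{0,\gamma\}=0$ when $Z=\emptyset$. For $Z\neq\emptyset$ this turns \eqref{uhvigcftfcomgen} into the corresponding inequality of Corollary~\ref{iuiggiiu}. For $Z=\emptyset$ the only subpartition of $Z$ is $\mathcal{P}=\emptyset$, so $e_{\mathcal{E}\cup\mathcal{A}}(\mathcal{P})=0=h|\mathcal{P}|$ and \eqref{uhvigcftfcomgen} becomes $\min\{\beta-f(V),\ r_{\sf M}(S_X)-f(X)\}\ge 0$ for all $X\subseteq V$, i.e.\ exactly $\beta\ge f(V)$ and $r_{\sf M}(S_X)\ge f(X)$, the two additional conditions listed in the corollary (which in turn imply the $Z=\emptyset$ instance of the corollary's own inequality, since $\gamma\ge 0$). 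Putting these equivalences together, the conditions of Theorem~\ref{iycivsxcomgen} under the chosen specialization coincide with those of Corollary~\ref{iuiggiiu}, completing the proof. I anticipate no real obstacle here; the only thing to watch is to evaluate $g'$ and the $\min/\max$ terms consistently on the empty set, where $g'(\emptyset)=0$ rather than $\infty$.
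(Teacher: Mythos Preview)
Your proposal is correct and follows exactly the route the paper indicates: the paper simply states that Corollary~\ref{iuiggiiu} is obtained from Theorem~\ref{iycivsxcomgen} by setting $q=0$, $q'=\gamma$, $f'(v)=0$, $g'(v)=\infty$ for every $v\in V$, and you have carried out in detail the verification that the conditions collapse as required, including the careful treatment of the $Z=\emptyset$ boundary case that yields the two extra inequalities $\beta\ge f(V)$ and $r_{\sf M}(S_X)\ge f(X)$.
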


If $\gamma=0,$ then Corollary \ref{iuiggiiu} reduces to Theorem \ref{bjhkcgckh}.

\subsection{Augmentation to have a packing of hyperbranchings}

We now provide the second main result of the paper which is  the augmentation version of Theorem  \ref{jjjnbbbnajkhypregdir}.

\begin{thm}\label{kjvkch}
Let $\mathcal{D} = (V, \mathcal{A})$ be a dypergraph, $h, k, \alpha, \beta, \gamma \in \mathbb Z_{+},$ and $\ell, \ell': \mathbb{Z}_k \rightarrow  \mathbb{Z}_+$ such that~\eqref{totnecessary} and~\eqref{indivnecessary} hold. We can add at most $\gamma$ arcs to $\mathcal{D}$ to have an $h$-regular $(\ell, \ell')$-bordered $(\alpha, \beta)$-limited packing of $k$  hyperbranchings 
if and only if     \eqref{hValpha} holds and 
\begin{eqnarray}  
	k	&	\ge	& 	h,	\label{kgeqh}\\
	\gamma+\beta - \ell(\mathbb{Z}_k) + \sum_{i=1}^k\min\{|\mathcal{P}|,\ell(i)\} + e_\mathcal{A}(\mathcal{P}) 		&	\ge	& 	h|\mathcal{P}| \qquad \text{for every subpartition } \mathcal{P} \text{ of } V, 	\label{konfeiobfiueguye1mregdirgam}\\
	\gamma+\sum_{i=1}^k\min\{|\mathcal{P}|,\ell'(i)\} + e_\mathcal{A}(\mathcal{P}) 		&	\ge	& 	h|\mathcal{P}| 
	\qquad \text{for every subpartition } \mathcal{P} \text{ of } V. 	\label{konfeiobfiueguye2mregdirgam}
\end{eqnarray}
\end{thm}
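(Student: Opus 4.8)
The plan is to reduce Theorem \ref{kjvkch} to the non-augmented Theorem \ref{jjjnbbbnajkhypregdir} by a standard ``add a new root vertex and cheap arcs'' construction, then translate the resulting conditions back to the dypergraph $\mathcal{D}$. Concretely, I would first observe that adding $\gamma$ arcs to $\mathcal{D}$ to obtain the desired packing is equivalent to finding, in an auxiliary dypergraph $\mathcal{D}'=(V+z,\mathcal{A}\cup F)$, where $z$ is a new vertex and $F$ consists of $\gamma$ parallel arcs from $z$ to each vertex of $V$ (so each arc $zv$ can be ``contracted'' into a genuine new arc $uv$ of $\mathcal{D}$ once it is used in an arborescence whose path reaching $v$ passes through some $u\in V$). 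More precisely, as in Corollary \ref{jfxghvbjkmix}, I would pick one of the $k$ root sets to be thought of as living partly on $z$: the clean way is to keep $k$ root sets $S_1,\dots,S_k$ but to replace ``$k$ hyperbranchings'' by a bookkeeping that forces at most $\gamma$ of the hyperedges leaving $z$ to be used across the whole packing, and then trimming/contracting those $z$-arcs gives exactly an added arc set of size at most $\gamma$.

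The cleanest route, and the one I expect the authors take, avoids introducing a new vertex and instead uses a direct \textbf{augmentation lemma} of the following shape: if $\mathcal{D}=(V,\mathcal{A})$ and we are allowed to add $\gamma$ arcs, then the family of subpartition inequalities that must hold is obtained from the ``no augmentation'' inequalities \eqref{konfeiobfiueguye1mregdir}--\eqref{konfeiobfiueguye2mregdir} simply by adding $\gamma$ to the left-hand side, \emph{provided} we also allow the ``extra'' $\gamma$ arcs to increase the effective in-degree freely; since here the only global constraint linking the $\gamma$ arcs to the packing is the $h$-regularity and the interval $[\alpha,\beta]$ on the number of branchings (which the added arcs do not change, as they only reroute tails), the clean statement is that $\mathcal{D}$ can be augmented by $\gamma$ arcs to admit the packing if and only if $\mathcal{D}+\gamma\cdot(\text{complete arc set})$ already does, and the latter is handled by Theorem \ref{jjjnbbbnajkhypregdir}. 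Evaluating Theorem \ref{jjjnbbbnajkhypregdir} on an auxiliary instance where $e_{\mathcal{A}}(\mathcal{P})$ is replaced by $e_{\mathcal{A}}(\mathcal{P})+\gamma$ for every non-trivial subpartition (the added arcs enter every proper subset) yields exactly \eqref{konfeiobfiueguye1mregdirgam}--\eqref{konfeiobfiueguye2mregdirgam}, while the new necessary condition $k\ge h$ in \eqref{kgeqh} appears because, once we are free to add arcs, the only obstruction to having $h$ branchings per vertex among $k$ branchings total is $k\ge h$ (whereas without augmentation this is implied by the subpartition conditions applied to singletons).

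So the key steps, in order, are: (1) prove \textbf{necessity} of \eqref{hValpha}, \eqref{kgeqh}, \eqref{konfeiobfiueguye1mregdirgam}, \eqref{konfeiobfiueguye2mregdirgam} directly --- \eqref{hValpha} and \eqref{kgeqh} are immediate from the definition of an $h$-regular packing of $k$ hyperbranchings on $V$, and the two subpartition inequalities follow from the corresponding inequalities in Theorem \ref{jjjnbbbnajkhypregdir} applied to $\mathcal{D}+F$ together with the bound $e_{\mathcal{A}\cup F}(\mathcal{P})\le e_{\mathcal{A}}(\mathcal{P})+|F|\le e_{\mathcal{A}}(\mathcal{P})+\gamma$; (2) for \textbf{sufficiency}, build the auxiliary dypergraph $\mathcal{D}'$ by adding a new vertex $z$ together with $\gamma$ arcs from $z$ to some fixed vertex $v_0\in V$ (or, to make the arithmetic transparent, $\gamma$ arcs distributed so that every subpartition of $V$ is entered exactly $\gamma$ extra times --- e.g.\ a single bunch of $\gamma$ parallel $z$-arcs to $v_0$ already enters every subpartition containing $v_0$ but not every one, so one instead takes $\gamma$ arcs from $z$ to \emph{each} vertex of $V$ and argues only $\gamma$ of them get used), increment $k$ and the root bounds appropriately to account for $z$, check that the hypotheses \eqref{totnecessary}, \eqref{indivnecessary}, \eqref{hValpha}, \eqref{konfeiobfiueguye1mregdir}, \eqref{konfeiobfiueguye2mregdir} of Theorem \ref{jjjnbbbnajkhypregdir} now hold for $\mathcal{D}'$ thanks to \eqref{kgeqh} and \eqref{konfeiobfiueguye1mregdirgam}--\eqref{konfeiobfiueguye2mregdirgam}, obtain the packing in $\mathcal{D}'$, and (3) \emph{trim and contract}: each hyperedge of the packing incident to $z$ is trimmed to an arc $zu$ and then contracted to an arc of $\mathcal{D}$, showing at most $\gamma$ such contractions occur and that the result is the desired augmented packing in $\mathcal{D}$.

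The main obstacle I anticipate is step (2): getting the auxiliary instance exactly right so that (a) the extra vertex $z$ contributes precisely $+\gamma$ to $e_{\mathcal{A}}(\mathcal{P})$ for \emph{every} subpartition $\mathcal{P}$ of $V$ (not merely those meeting a chosen vertex), which forces a slightly delicate choice --- one natural fix is to let the new arcs form, for each $v\in V$, a set of $\gamma$ parallel arcs $zv$, and then to show via a counting/exchange argument that in any optimal packing at most $\gamma$ of these $z$-arcs are used in total, so contracting them yields at most $\gamma$ new arcs; and (b) handling the bookkeeping of the root sets $S_1,\dots,S_k$ of the hyperbranchings when $z$ is added, since $z$ must be a root of every branching it is not otherwise reachable in, which requires temporarily enlarging each $S_i$ by $\{z\}$ (or adding one extra branching rooted solely at $z$) and checking this does not violate \eqref{indivnecessary} or change the $(\alpha,\beta)$-count --- this is exactly the ``new augmentation lemma'' the introduction refers to, and verifying its hypotheses against \eqref{kgeqh} and \eqref{totnecessary} is the technical heart of the argument. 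Once that translation is nailed down, the equivalence of the inequality systems is a routine substitution $e_{\mathcal{A}}(\mathcal{P})\mapsto e_{\mathcal{A}}(\mathcal{P})+\gamma$ in Theorem \ref{jjjnbbbnajkhypregdir}.
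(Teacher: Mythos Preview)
Your necessity argument is fine and matches the paper. Your sufficiency strategy, however, diverges from the paper's and has a real gap you have not closed.

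The paper does \emph{not} introduce an auxiliary vertex. Its ``new augmentation lemma'' (Lemma~\ref{efegegrg}) is a purely numerical statement: given $h,k,n,\alpha,\beta,\gamma,\ell,\ell'$ and the function $e(p)=\min\{e_{\mathcal A}(\mathcal P):|\mathcal P|=p\}$, it produces modified parameters $\hat\alpha,\hat\beta,\hat\ell,\hat\ell'$ with $\hat\ell\ge\ell$, $\hat\ell'\ge\ell'$, and $\hat\ell(\mathbb Z_k)-\ell(\mathbb Z_k)=\hat\alpha-\alpha=\hat\beta-\beta\le\gamma$, such that the \emph{un-augmented} hypotheses \eqref{konfeiobfiueguye1mregdir}--\eqref{konfeiobfiueguye2mregdir} hold for $\mathcal D$ itself with $\hat\alpha,\hat\beta,\hat\ell,\hat\ell'$. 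Theorem~\ref{jjjnbbbnajkhypregdir} then gives a packing in $\mathcal D$ with root sets $\hat S_1,\dots,\hat S_k$; one shrinks each $\hat S_i$ to a subset $S_i$ of size $|\hat S_i|-\hat\ell(i)+\ell(i)$ and adds, for every discarded root, a single new arc from a surviving root. The total number of added arcs is $\sum_i(\hat\ell(i)-\ell(i))\le\gamma$, and the resulting packing is $(\ell,\ell')$-bordered and $(\alpha,\beta)$-limited. The insight is that an extra root in a branching saves exactly one arc, so an arc budget of $\gamma$ is interchangeable with a root-count budget of $\gamma$.

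Your route via a new vertex $z$ runs into the problem you yourself flag but do not resolve: if you add $\gamma$ parallel arcs from $z$ to \emph{every} vertex of $V$, there is no mechanism in Theorem~\ref{jjjnbbbnajkhypregdir} that limits the total number of $z$-arcs used to $\gamma$; and if you add only $\gamma$ arcs in total out of $z$, they do not enter every subpartition of $V$, so the substitution $e_{\mathcal A}(\mathcal P)\mapsto e_{\mathcal A}(\mathcal P)+\gamma$ fails. Moreover, forcing $z$ into $h$ root sets (unavoidable since nothing enters $z$) perturbs the $(\ell,\ell')$ and $(\alpha,\beta)$ constraints in a way you have not tracked. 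These are not bookkeeping details; they are exactly why the paper abandons the auxiliary-vertex idea here and instead shifts the augmentation into the root-set parameters.
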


\begin{proof}
The {\bf necessity} easily follows from Theorem \ref{jjjnbbbnajkhypregdir} and the fact that $e_{\mathcal{A}\cup F}(\mathcal{P})\le e_\mathcal{A}(\mathcal{P})+\gamma$, where $F$ is the new arc set added to $\mathcal{D}.$ To prove the {\bf sufficiency}, suppose that \eqref{totnecessary}, \eqref{indivnecessary}, \eqref{hValpha}, \eqref{kgeqh}, \eqref{konfeiobfiueguye1mregdirgam}, and \eqref{konfeiobfiueguye2mregdirgam} hold. 
We need the following technical augmentation lemma.

\begin{Lemma}\label{efegegrg} 
Let $h, k,  n,\alpha,\beta,\gamma\in \mathbb Z_{+}, \ell,\ell': \mathbb Z_k\rightarrow  \mathbb Z_+$, and $e: \{1,\dots, n\}\rightarrow  \mathbb Z_{+}$.
 There exist $\hat\alpha,\hat\beta\in \mathbb Z_{+},$ and  $\hat\ell,\hat\ell': \mathbb Z_k\rightarrow  \mathbb Z_+$ such that 
   \begin{alignat}{3}
        	n \ge \hat{\ell}'(i) &\ge \hat{\ell}(i)  &&\text{for every } 1 \le i \le k, 							\label{newindcond}\\
        	0 \le \hat{\ell}'(i) - \ell'(i) &\le \hat{\ell}(i) - \ell(i)  &&\text{for every } 1 \le i \le k,					\label{mvlkl} \\
        	hn &\ge \hat\alpha, 																\label{alphareg}\\
        	\hat{\ell}'(\mathbb Z_k)\ge\hat\beta \ge \hat\alpha&\ge\hat{\ell}(\mathbb Z_k), 					\label{newtotcond}\\
        	\hat\alpha - \alpha = \hat\beta - \beta = \hat{\ell}(\mathbb Z_k) &- \ell(\mathbb Z_k) \le {\gamma},	\label{uhviuycyi} \\
      	\hat\beta+\sum\nolimits_{i = 1}^k\min\{p-\hat{\ell}(i),0\}  &\ge hp-e(p)&&\text{for every }1\le p\le n, 	\label{newcondell}\\
	\hat{\ell}'(\mathbb Z_k)+ \sum\nolimits_{i = 1}^k\min\{p-\hat{\ell}'(i),0\}&\ge hp-e(p)\qquad&&\text{for every }1\le p\le n,		\label{newcondell'}
    \end{alignat}
    if and only if 
\begin{alignat}{3}
        n \ge \ell'(i) &\ge \ell(i) \qquad &&\text{for every } 1 \le i \le k, \label{indnecess}\\
        hn &\ge \alpha,\label{miuviclkb} \\
	\ell'(\mathbb Z_k) \ge \beta \ge \alpha &\ge\ell(\mathbb Z_k), \label{totnecess}\\
	kp&\ge hp-e(p) \qquad &&\text{for every } 1 \le p \le n,\label{jkbfytdydyi}\\
	\gamma+\beta+\sum\nolimits_{i = 1}^k\min\{p-\ell(i),0\}&\ge hp-e(p)\qquad&&\text{for every }1\le p\le n,\label{newcondellrgfr}\\
	\gamma+\ell'(\mathbb Z_k)+\sum\nolimits_{i = 1}^k\min\{p-\ell'(i),0\}&\ge hp-e(p)\qquad&&\text{for every }1\le p\le n.			\label{newcondell'rrfgg}
\end{alignat}
\end{Lemma}

\begin{proof}
We first show the {\bf necessity}. Suppose that \eqref{newindcond}--\eqref{newcondell'} hold for some $\hat{\alpha},\hat{\beta},\hat\ell$, and $\hat\ell'$. By \eqref{newindcond} and \eqref{mvlkl}, we have, for every $1\le i\le k,$ $\ell(i)\le\hat\ell'(i)-\hat\ell(i)+\ell(i)\le\ell'(i)\le\hat\ell'(i)\le n,$ so \eqref{indnecess} holds. By \eqref{alphareg}, \eqref{uhviuycyi}, and \eqref{mvlkl}, we have $hn-\alpha\ge\hat\alpha-\alpha=\hat\ell(\mathbb Z_k)-\ell(\mathbb Z_k)\ge 0,$ so \eqref{miuviclkb} holds. By  \eqref{mvlkl} and \eqref{uhviuycyi}, we have $\ell'(\mathbb Z_k)-\hat\ell'(\mathbb Z_k)\ge \beta-\hat\beta=\alpha-\hat\alpha=\ell(\mathbb Z_k)-\hat\ell(\mathbb Z_k).$ By adding this to \eqref{newtotcond}, we obtain \eqref{totnecess}. By  \eqref{newcondell'}, we have $kp\ge\sum\nolimits_{i = 1}^k\min\{p,\hat{\ell}'(i)\}\ge hp-e(p)$, for every $1\le p\le n$, so \eqref{jkbfytdydyi} holds. By \eqref{uhviuycyi}, \eqref{mvlkl}, and \eqref{newcondell}, we have $\gamma+\beta+\sum_{i = 1}^k\min\{p-\ell(i),0\}\ge \hat\beta+\sum_{i = 1}^k\min\{p-\hat\ell(i),0\}\ge hp-e(p)$ for every $1\le p\le n,$ so \eqref{newcondellrgfr} holds. By \eqref{uhviuycyi}, \eqref{mvlkl}, and \eqref{newcondell'}, we have $\gamma+{\ell}'(\mathbb Z_k)+\sum_{i = 1}^k\min\{p-\ell'(i),0\}\ge \hat{\ell}'(\mathbb Z_k)+\sum_{i = 1}^k\min\{p-\hat\ell'(i),0\}\ge hp-e(p)$ for every $1\le p\le n,$ so \eqref{newcondell'rrfgg} holds.
\medskip

To prove the {\bf sufficiency}, suppose that \eqref{indnecess}--\eqref{newcondell'rrfgg} hold. Let {\boldmath$\hat\gamma$} 
$\in\mathbb{Z}_+$ be minimum such that there exist  {\boldmath$\hat\alpha,\hat\beta$} $\in\mathbb{Z}_{+},$ and {\boldmath$\hat\ell,\hat\ell'$} $: \mathbb Z_k\rightarrow  \mathbb Z_+$ satisfying \eqref{newindcond}--\eqref{newtotcond} and 
   \begin{alignat}{3}
  \hat\alpha - \alpha = \hat\beta - \beta = \hat{\ell}(\mathbb Z_k) - \ell(\mathbb Z_k) &\le \gamma-\hat{\gamma},\label{uhviuycyihat} \\
      \hat{\gamma}+\hat\beta+\sum_{i = 1}^k\min\{p-\hat{\ell}(i),0\}  &\ge hp-e(p)&&\text{for every } 1\le p\le n, \label{newcondellhat}\\
\hat{\gamma}+\hat{\ell}'(\mathbb Z_k)+\sum_{i=1}^k\min\{p-\hat{\ell}'(i),0\}&\ge hp-e(p) \qquad &&\text{for every }1\le p\le n. \label{newcdell'hat}
    \end{alignat}
Since \eqref{indnecess}--\eqref{newcondell'rrfgg} hold for $\alpha,\beta,{\gamma},\ell$, and $\ell'$, it follows that $\hat\alpha,\hat\beta,\hat{\gamma},\hat\ell$, and $\hat\ell'$ exist. We show that $\hat{\gamma}=0$, and hence, by \eqref{uhviuycyihat}--\eqref{newcdell'hat}, it follows that  $\hat\alpha,\hat\beta,\hat\ell,$ and $\hat\ell'$ also satisfy \eqref{uhviuycyi}--\eqref{newcondell'} and we are done.
Suppose that $\hat{\gamma}>0.$ By the minimality of $\hat\gamma$, there exists a minimum {\boldmath$\hat p$} $\in\{1,\dots,n\}$ such that at least one of \eqref{newcondellhat} and \eqref{newcdell'hat} is tight for $\hat p.$ Let {\boldmath$X$} be the set of indices where $\hat p-\hat\ell(i)\le 0$ and {\boldmath$X'$}  the set of indices where $\hat p-\hat\ell'(i)\le 0.$ We first prove the following claim.
\begin{cl}\label{bonpop}
 The following hold.
\begin{itemize}
\item[(a)] $hn>\hat\alpha.$
\item[(b)] If \eqref{newcondellhat} is tight for $\hat p$, then $X\neq \mathbb Z_k.$
\item[(c)] If $X'=\mathbb Z_k,$ then \eqref{newcdell'hat} is strict for every $1\le p\le n$ and $\hat{\ell}'(\mathbb Z_k) >\hat\beta$.  
\end{itemize}
\end{cl}
\begin{proof}
{\bf (a)} If $\hat\alpha\ge hn,$ then, by \eqref{newtotcond}, we have 
	\begin{eqnarray}\label{mbivuvuy}
		\hat{\ell}'(\mathbb Z_k)\ge\hat\beta \ge hn.
	\end{eqnarray}
 
\noindent {\bf Case 1.} If \eqref{newcondellhat} is tight for $\hat p,$ then, by $e\ge 0$ and $\hat\gamma>0,$ we have 
	\begin{eqnarray}\label{mobuyfutd}
		h\hat p>\hat\beta+\sum_{i = 1}^k\min\{\hat p-\hat{\ell}(i),0\}=\hat\beta+|X|\hat p-\hat\ell(X).
	\end{eqnarray}
If $|X|\ge h,$ then, by \eqref{mobuyfutd}, $\hat\ell\ge 0$,  and \eqref{newtotcond},  we have  a contradiction: $h\hat p>\hat\beta+|X|\hat p-\hat\ell(X)\ge \hat\beta-\hat\ell(\mathbb Z_k)+h\hat p\ge h\hat p.$ If $|X|\le h,$ then, by \eqref{mobuyfutd}, \eqref{mbivuvuy},  $n\ge \hat p,$ and \eqref{indnecess},   we have  a contradiction: $h\hat p>\hat\beta+|X|\hat p-\hat\ell(X)\ge (h-|X|)n+|X|n+|X|\hat p-\hat\ell(X)\ge h\hat p+\sum_{i\in X}(n-\hat\ell(i))\ge h\hat p.$
\medskip

\noindent {\bf Case 2.} If \eqref{newcdell'hat} is tight for $\hat p,$ then, by $e\ge 0$ and $\hat\gamma>0,$ we have 
	\begin{eqnarray}\label{mkbhutr}
		h\hat p>\hat{\ell}'(\mathbb Z_k)+\sum_{i = 1}^k\min\{\hat p-\hat{\ell}'(i),0\}=\hat{\ell}'(\mathbb Z_k)+|X'|\hat p-\hat\ell'(X').
	\end{eqnarray}
 If $|X'|\ge h,$ then, by \eqref{mkbhutr}, and $\hat\ell'\ge 0$   we have  a contradiction: $h\hat p>\hat{\ell}'(\mathbb Z_k)+|X'|\hat p-\hat\ell'(X')\ge \hat{\ell}'(\mathbb Z_k)+h\hat p-\hat\ell'(\mathbb Z_k)\ge h\hat p.$ If $|X'|\le h,$ then, by \eqref{mkbhutr}, \eqref{mbivuvuy}, $n\ge \hat p,$ and \eqref{indnecess},   we have  a contradiction: $h\hat p>\hat{\ell}'(\mathbb Z_k)+|X'|\hat p-\hat\ell'(X')\ge (h-|X'|)n+|X'|n+|X'|\hat p-\hat\ell'(X')\ge h\hat p+\sum_{i\in X'}(n-\hat\ell'(i))\ge h\hat p.$
\medskip
		
\noindent {\bf (b)} If $X=\mathbb Z_k$, then, by $\hat{\gamma}>0$, \eqref{newtotcond},  \eqref{newcondellhat} is tight for $\hat p$, and \eqref{jkbfytdydyi}, we have a contradiction: $k\hat p<\hat{\gamma}+\hat\beta-\hat{\ell}(\mathbb Z_k)+k\hat p=\hat{\gamma}+\hat\beta+\sum_{i = 1}^k\min\{\hat p-\hat{\ell}(i),0\}=h\hat p-e(\hat p)\le k\hat p.$
\medskip

\noindent {\bf (c)} Suppose that $X'=\mathbb Z_k$. If $p<\hat p,$ then, by the minimality of $\hat p$, \eqref{newcdell'hat} is strict for $p$.

If $p=\hat p,$ then, by  \eqref{jkbfytdydyi}, $\hat{\gamma}>0$, and $X'=\mathbb Z_k$,  we have $h\hat p-e(\hat p)\le k\hat p<\hat{\gamma}+\hat{\ell}'(\mathbb Z_k)-\hat{\ell}'(\mathbb Z_k)+k\hat p=\hat{\gamma}+\hat{\ell}'(\mathbb Z_k)+\sum_{i = 1}^k\min\{\hat p-\hat{\ell}'(i),0\},$
so \eqref{newcdell'hat} is strict for $\hat p$. Note that in this case, by the definition of $\hat p$, \eqref{newcondellhat} is tight for $\hat p$ and hence 
	\begin{eqnarray}\label{ljvytusyui}
		\hat\beta<k\hat p-\sum_{i = 1}^k\min\{\hat p-\hat{\ell}(i),0\}=\sum_{i = 1}^k\max\{\hat p,\hat{\ell}(i)\}\le \sum_{i = 1}^k\hat{\ell}'(i)=\hat{\ell}'(\mathbb Z_k). 
	\end{eqnarray}

If $p\ge \hat p,$ then, for every $1\le i\le k,$ by $\max\{\hat p,\hat{\ell}(i)\}\le \hat{\ell}'(i)$, we have $\hat p\le \min\{p,\hat{\ell}'(i)\}-0,$ $\hat{\ell}(i)\le p-(p-\hat{\ell}(i))$ and $\hat{\ell}(i)\le \hat{\ell}'(i)-0$,  so $\max\{\hat p,\hat{\ell}(i)\}\le \min\{p,\hat{\ell}'(i)\}-\min\{p-\hat{\ell}(i),0\}.$ Hence, we have
	\begin{eqnarray}\label{jbmgm}
		\sum_{i = 1}^k\max\{\hat p,\hat{\ell}(i)\}\le \sum_{i = 1}^k\min\{p,\hat{\ell}'(i)\}-\sum_{i = 1}^k\min\{p-\hat{\ell}(i),0\}.
	\end{eqnarray}
By \eqref{ljvytusyui}, \eqref{jbmgm}, and since \eqref{newcondellhat} is tight for $\hat p$, we obtain that \eqref{newcdell'hat} is strict for $p$. This completes the proof of Claim \ref{bonpop}.
\end{proof}

We now distinguish two cases and we give a contradiction in both cases.
\medskip

\noindent {\bf Case I.} Suppose that $X'\neq \mathbb Z_k.$  Let $m\in \overline{X'}.$ Then, by \eqref{newindcond}, $\hat p>\hat\ell'(m)\ge \hat\ell(m).$ 
\medskip

Let $\tilde\gamma=\hat\gamma-1,$ $\tilde\alpha=\hat\alpha+1,$ $\tilde\beta=\hat\beta+1,$ $\tilde\ell(m)=\hat\ell(m)+1, \tilde\ell'(m)=\hat\ell'(m)+1$ and $\tilde\ell(i)=\hat\ell(i),\tilde\ell'(i)=\hat\ell'(i)$ for $i\in \mathbb Z_k-\{m\}.$ 
\medskip

By \eqref{newindcond} and $n\ge \hat p>\hat\ell'(m),$ we have $n\ge\tilde\ell'(i)\ge\tilde\ell(i)$ for every $1\le i\le k.$ By \eqref{mvlkl}, we have $0\le\tilde{\ell}'(i)-\ell'(i)\le\tilde{\ell}(i)-\ell(i)$ for every $1\le i\le k.$ By Claim \ref{bonpop}(a), we have $hn\ge\tilde\alpha.$ By \eqref{newtotcond}, we have $\tilde{\ell}'(\mathbb Z_k)\ge\tilde\beta\ge\tilde\alpha\ge\tilde{\ell}(\mathbb Z_k).$ By \eqref{uhviuycyi}, we have $\tilde\alpha-\alpha=\tilde\beta-\beta=\tilde{\ell}(\mathbb Z_k)-\ell(\mathbb Z_k)\le{\gamma}-\tilde\gamma.$ By $\tilde{\gamma}+\tilde\beta=\hat{\gamma}+\hat\beta$, $\sum_{i = 1}^k\min\{p-\tilde{\ell}(i),0\}=\sum_{i = 1}^k\min\{p-\hat{\ell}(i),0\}$ for every $\hat p\le p\le n$, $\sum_{i = 1}^k\min\{p-\tilde{\ell}(i),0\}\ge \sum_{i = 1}^k\min\{p-\hat{\ell}(i),0\}-1$ for every $1\le p\le \hat p$, and \eqref{newcondellhat}, we have $\tilde{\gamma}+\tilde\beta+\sum_{i = 1}^k\min\{p-\tilde{\ell}(i),0\}  \ge hp-e(p)$ for every $1\le p\le n.$ Finally, by $\tilde{\gamma}+\tilde{\ell}'(\mathbb Z_k)=\hat{\gamma}+\hat{\ell}'(\mathbb Z_k)$, $\sum_{i = 1}^k\min\{p-\tilde{\ell}'(i),0\}=\sum_{i = 1}^k\min\{p-\hat{\ell}'(i),0\}$ for every $\hat p\le p\le n$, $\sum_{i = 1}^k\min\{p-\tilde{\ell}'(i),0\}\ge \sum_{i = 1}^k\min\{p-\hat{\ell}'(i),0\}-1$ for every $1\le p\le \hat p$, and \eqref{newcdell'hat}, we have $\tilde{\gamma}+\tilde{\ell}'(\mathbb Z_k)+\sum_{i = 1}^k\min\{p-\tilde{\ell}'(i),0\}  \ge hp-e(p)$ for every $1\le p\le n.$

The existence of $\tilde\alpha,\tilde\beta,\tilde\gamma, \tilde\ell,$ and $\tilde\ell'$ contradicts the minimality of $\hat\gamma.$
\medskip

\noindent {\bf Case II.} Suppose that $X'=\mathbb Z_k.$ By Claim \ref{bonpop}(c),  \eqref{newcdell'hat}    is strict for every $1\le p\le n$. Then, \eqref{newcondellhat} is tight for $\hat p$, so, by Claim \ref{bonpop}(b), we have $X\neq \mathbb Z_k.$ Let $m\in \overline{X}.$ Then $\hat\ell'(m)\ge \hat p>\hat\ell(m).$ 
\medskip

Let $\tilde\gamma=\hat\gamma-1,$ $\tilde\alpha=\hat\alpha+1, \tilde\beta=\hat\beta+1, \tilde\ell(m)=\hat\ell(m)+1$, $\tilde\ell(i)=\hat\ell(i)$  for $i\in \mathbb Z_k-\{m\}$, and $\tilde\ell'(i)=\hat\ell'(i)$ for $i\in \mathbb Z_k.$
\medskip

By \eqref{newindcond} and $\hat\ell'(m)>\hat\ell(m),$ we have $n\ge\tilde\ell'(i)\ge\tilde\ell(i)$ for every $1\le i\le k.$ By \eqref{mvlkl}, we have $0\le\tilde{\ell}'(i)-\ell'(i)\le\tilde{\ell}(i)-\ell(i)$ for every $1\le i\le k.$ By Claim \ref{bonpop}(a), we have $hn\ge \tilde\alpha.$
By \eqref{newtotcond} and Claim \ref{bonpop}(c), we have $\tilde{\ell}'(\mathbb Z_k)\ge\tilde\beta\ge\tilde\alpha\ge\tilde{\ell} (\mathbb Z_k).$ By \eqref{uhviuycyihat}, we have $\tilde\alpha-\alpha=\tilde\beta-\beta=\tilde{\ell}(\mathbb Z_k)-\ell(\mathbb Z_k)\le{\gamma}-\tilde\gamma.$ By $\tilde{\gamma}+\tilde\beta=\hat{\gamma}+\hat\beta$, $\sum_{i = 1}^k\min\{p-\tilde{\ell}(i),0\}=\sum_{i = 1}^k\min\{p-\hat{\ell}(i),0\}$ for every $\hat p\le p\le n$, $\sum_{i = 1}^k\min\{p-\tilde{\ell}(i),0\}\ge \sum_{i = 1}^k\min\{p-\hat{\ell}(i),0\}-1$ for every $1\le p\le \hat p$, and \eqref{newcondellhat}, we have $\tilde{\gamma}+\tilde\beta+\sum_{i = 1}^k\min\{p-\tilde{\ell}(i),0\}  \ge hp-e(p)$ for every $1\le p\le n.$ Finally, by $\tilde{\gamma}+\tilde{\ell}'(\mathbb Z_k)=\hat{\gamma}+\hat{\ell}'(\mathbb Z_k)$, $\sum_{i = 1}^k\min\{p-\tilde{\ell}'(i),0\}\ge \sum_{i = 1}^k\min\{p-\hat{\ell}'(i),0\}-1$ for every $1\le p\le n$, and \eqref{newcdell'hat}    is strict for every $1\le p\le n$, we have $\tilde{\gamma}+\tilde{\ell}'(\mathbb Z_k)+\sum_{i = 1}^k\min\{p-\tilde{\ell}'(i),0\}  \ge hp-e(p)$ for every $1\le p\le n.$

The existence of $\tilde\alpha,\tilde\beta,\tilde\gamma, \tilde\ell,$ and $\tilde\ell'$ contradicts the minimality of $\hat\gamma.$
\medskip

 This completes the proof of Lemma \ref{efegegrg}.
 \end{proof}

We now are in a position to prove Theorem \ref{kjvkch}. Let $n=|V|.$ By \eqref{indivnecessary}, \eqref{hValpha}, and \eqref{totnecessary}, we have that \eqref{indnecess}, \eqref{miuviclkb} and \eqref{totnecess} hold.  Let $e(p)=\min\{e_\mathcal{A}(\mathcal{P}):\mathcal{P}$ subpartition of $V$ such that $|\mathcal{P}|=p\}$ for every $1\le p\le n.$ Note that $e(p)\ge 0$ for every $1\le p\le n.$ Then, by \eqref{kgeqh}, \eqref{konfeiobfiueguye1mregdirgam}, and \eqref{konfeiobfiueguye2mregdirgam}, we obtain \eqref{jkbfytdydyi}, \eqref{newcondellrgfr} and \eqref{newcondell'rrfgg}. Hence, by Lemma \ref{efegegrg}, there exist $\hat\alpha, \hat\beta\in\mathbb Z_+$ and $\hat\ell,\hat\ell': \mathbb Z_k\rightarrow  \mathbb Z_+$ such that \eqref{newindcond}--\eqref{newcondell'} hold. Then  \eqref{totnecessary}, \eqref{indivnecessary} \eqref{hValpha},   \eqref{konfeiobfiueguye1mregdir}, and \eqref{konfeiobfiueguye2mregdir} hold for $\hat\alpha, \hat\beta,\hat\ell$ and $\hat\ell'$. Hence, by Theorem \ref{jjjnbbbnajkhypregdir}, there exists an $h$-regular $(\hat\ell, \hat\ell')$-bordered $(\hat\alpha, \hat\beta)$-limited packing of hyperbranchings $\hat{\mathcal{B}}_i$ with root sets $\hat S_1,\dots, \hat S_k$  in $\mathcal{D}$. Let $i\in \mathbb{Z}_k.$ By \eqref{mvlkl}, $\hat\ell(i)\le|\hat S_i|\le\hat\ell'(i)$, and $\ell\ge 0$, there exists $S_i\subseteq \hat S_i$ with $|S_i|=|\hat S_i|-\hat\ell(i)+\ell(i).$ Further, $\ell(i)\le |S_i|\le\ell'(i).$ Let $\mathcal{B}_i$ be obtained from $\hat{\mathcal{B}}_i$ by adding a set $A_i$ of new arcs consisting of an arc from any vertex $s_i\in S_i$ to every vertex in $\hat S_i-S_i.$ Then $\mathcal{B}_i$ is an $S_i$-hyperbranching. Let $\mathcal{D}'$ be obtained from $\mathcal{D}$ by adding $\bigcup_{i=1}^kA_i.$ Since $\sum_{i=1}^k(|\hat S_i|-|S_i|)=\sum_{i=1}^k(\hat\ell(i)-\ell(i))$ and $\hat\alpha\le\sum_{i=1}^k|\hat S_i|\le\hat\beta,$ \eqref{uhviuycyi} implies that $\alpha\le\sum_{i=1}^k|S_i|\le\beta$. By \eqref{uhviuycyi}, the number of new arcs is $|\bigcup_{i=1}^kA_i|=\sum_{i=1}^k(\hat\ell(i)-\ell(i))\le\gamma$. Hence,  $\mathcal{B}_1,\dots, \mathcal{B}_k$ is the desired  $h$-regular $(\ell, \ell')$-bordered $(\alpha, \beta)$-limited packing of hyperbranchings in $\mathcal{D}'$ that completes the proof of Theorem \ref{kjvkch}.
\end{proof}

If $\gamma=0,$ then Theorem \ref{kjvkch} reduces to Theorem \ref{jjjnbbbnajkhypregdir}.

\subsection{Augmentation to have a packing of rooted hyperforests}

The following undirected version of Theorem \ref{kjvkch} can be obtained similarly by applying Lemma \ref{efegegrg} and Theorem \ref{jjjnbbbnajkhypregdirecece}.

\begin{thm}\label{kjvkchnon}
Let $\mathcal{G} = (V, \mathcal{E})$ be a hypergraph, $h, k, \alpha, \beta, \gamma \in \mathbb Z_+$ and $\ell, \ell': \mathbb{Z}_k \rightarrow  \mathbb{Z}_+$ such that~\eqref{totnecessary} and~\eqref{indivnecessary} hold. We can add at most $\gamma$ edges to $\mathcal{G}$ to have an $h$-regular $(\ell, \ell')$-bordered $(\alpha, \beta)$-limited packing of $k$ rooted hyperforests 
if and only if     \eqref{hValpha} and \eqref{kgeqh} hold and 
\begin{eqnarray*}  
	\gamma+\beta - \ell(\mathbb{Z}_k) + \sum_{i=1}^k\min\{|\mathcal{P}|,\ell(i)\} + e_\mathcal{E}(\mathcal{P}) &\ge& h|\mathcal{P}| \qquad \text{for every partition } \mathcal{P} \text{ of } V, \label{konfeiobfiueguye1mregdirgamnon}\\
	\gamma+\sum_{i=1}^k\min\{|\mathcal{P}|,\ell'(i)\} + e_\mathcal{E}(\mathcal{P}) &\ge& h|\mathcal{P}| \qquad \text{for every partition } \mathcal{P} \text{ of } V. \label{konfeiobfiueguye2mregdirgamnon}
\end{eqnarray*}
\end{thm}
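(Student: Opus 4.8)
The plan is to follow the proof of Theorem~\ref{kjvkch} essentially line by line, with three substitutions: subpartitions of $V$ are replaced by partitions of $V$, the dyperedge set $\mathcal{A}$ by the hyperedge set $\mathcal{E}$, and the directed packing theorem \ref{jjjnbbbnajkhypregdir} by its undirected counterpart Theorem~\ref{jjjnbbbnajkhypregdirecece}. For \textbf{necessity}, adding an edge set $F$ with $|F|\le\gamma$ raises $e_\mathcal{E}(\mathcal{P})$ by at most $\gamma$ on every partition $\mathcal{P}$ of $V$, so Theorem~\ref{jjjnbbbnajkhypregdirecece} applied to $\mathcal{G}+F$ yields \eqref{hValpha} and the two displayed partition inequalities of the statement; the inequality $k\ge h$, i.e.\ \eqref{kgeqh}, is obtained by taking $\mathcal{P}=\{V\}$, since then $e_{\mathcal{E}\cup F}(\{V\})=0$ and $\sum_{i=1}^k\min\{1,\ell'(i)\}\le k$.

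For \textbf{sufficiency}, set $n=|V|$ and define $e(p)=\min\{e_\mathcal{E}(\mathcal{P}):\mathcal{P}\text{ a partition of }V,\ |\mathcal{P}|=p\}\ge0$ for $1\le p\le n$. The key bookkeeping device, used exactly as in the proof of Theorem~\ref{kjvkch}, is the identity $\sum_{i=1}^k\min\{p,\ell(i)\}=\ell(\mathbb{Z}_k)+\sum_{i=1}^k\min\{p-\ell(i),0\}$ (and its analogue for $\ell'$): together with the definition of $e(p)$ it shows that the two partition inequalities of the statement are equivalent to \eqref{newcondellrgfr} and \eqref{newcondell'rrfgg}. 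Moreover \eqref{indivnecessary} gives \eqref{indnecess}, \eqref{hValpha} gives \eqref{miuviclkb}, \eqref{totnecessary} gives \eqref{totnecess}, and \eqref{kgeqh} together with $e(p)\ge0$ gives \eqref{jkbfytdydyi}; hence Lemma~\ref{efegegrg} applies and produces $\hat\alpha,\hat\beta\in\mathbb Z_+$ and $\hat\ell,\hat\ell':\mathbb{Z}_k\to\mathbb Z_+$ satisfying \eqref{newindcond}--\eqref{newcondell'}. Reading the same identity backwards and using $e(|\mathcal{P}|)\le e_\mathcal{E}(\mathcal{P})$, conditions \eqref{newindcond}--\eqref{newcondell'} translate into \eqref{totnecessary}, \eqref{indivnecessary}, \eqref{hValpha}, \eqref{konfeiobfiueguye1mreg}, and \eqref{konfeiobfiueguye2mreg} for the data $h,k,\hat\alpha,\hat\beta,\hat\ell,\hat\ell'$, so Theorem~\ref{jjjnbbbnajkhypregdirecece} yields an $h$-regular $(\hat\ell,\hat\ell')$-bordered $(\hat\alpha,\hat\beta)$-limited packing of $k$ rooted hyperforests $\hat{\mathcal B}_1,\dots,\hat{\mathcal B}_k$ in $\mathcal{G}$ with root sets $\hat S_1,\dots,\hat S_k$.

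It then remains to pass to smaller root sets. For each $i$, pick $S_i\subseteq\hat S_i$ with $|S_i|=|\hat S_i|-\hat\ell(i)+\ell(i)$; this is well defined and, by \eqref{mvlkl}, satisfies $\ell(i)\le|S_i|\le\ell'(i)$. Form $\mathcal B_i$ from $\hat{\mathcal B}_i$ by adding, for every $v\in\hat S_i-S_i$, a new edge joining $v$ to a fixed vertex of $S_i$, exactly as in the construction in the proof of Theorem~\ref{kjvkch}: orienting each such edge towards $v$ converts the orientation witnessing that $\hat{\mathcal B}_i$ is a rooted $\hat S_i$-hyperforest into one witnessing that $\mathcal B_i$ is a rooted $S_i$-hyperforest, and no dyperedge is introduced. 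The vertex sets are unchanged, so the packing $\mathcal B_1,\dots,\mathcal B_k$ stays $h$-regular; by \eqref{uhviuycyi} the total number of added edges equals $\sum_{i=1}^k(\hat\ell(i)-\ell(i))=\hat\ell(\mathbb Z_k)-\ell(\mathbb Z_k)\le\gamma$, and $\sum_{i=1}^k|S_i|=\sum_{i=1}^k|\hat S_i|-(\hat\beta-\beta)$ lies in $[\alpha,\beta]$, so the packing is $(\ell,\ell')$-bordered and $(\alpha,\beta)$-limited, which completes the proof. I expect no genuinely new obstacle beyond Theorem~\ref{kjvkch}: the only points needing care are the two translations through the $\min\{p,\ell(i)\}$ identity (so that Lemma~\ref{efegegrg} can be fed in and read off), and the verification that demoting $\hat\ell(i)-\ell(i)$ roots of each $\hat{\mathcal B}_i$ by one pendant edge apiece stays inside the undirected world while meeting the edge budget with equality $\sum_{i=1}^k(\hat\ell(i)-\ell(i))$; both are handled just as in the directed proof.
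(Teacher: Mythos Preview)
Your proof is correct and follows exactly the route the paper indicates: apply Lemma~\ref{efegegrg} to the partition-based function $e(p)$ and then invoke Theorem~\ref{jjjnbbbnajkhypregdirecece} for $\hat\alpha,\hat\beta,\hat\ell,\hat\ell'$, finishing by demoting $\hat\ell(i)-\ell(i)$ roots in each $\hat{\mathcal B}_i$ via new edges, just as in the proof of Theorem~\ref{kjvkch}. The paper itself gives no separate proof of Theorem~\ref{kjvkchnon} beyond the sentence ``can be obtained similarly by applying Lemma~\ref{efegegrg} and Theorem~\ref{jjjnbbbnajkhypregdirecece}'', so your write-up is precisely the intended argument with the details spelled out.
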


If $\gamma=0,$ then Theorem \ref{kjvkchnon} reduces to Theorem \ref{jjjnbbbnajkhypregdirecece}.


\begin{thebibliography}{99} 
	\bibitem{BF3} {K. B\'erczi, A. Frank,} 
		\textit{Supermodularity in Unweighted Graph Optimization I: Branchings and Matchings,}  
		Math. Oper. Res. 43(3) (2018) 726--753.
	\bibitem{cai1} M. C. Cai, \textit{Arc-disjoint arborescences of digraphs}, 
		J. Graph Theory 7 (1983) 235--240.
	\bibitem{CASU} G. R. Cai, Y. G. Sun, 
		\textit{The minimum augmentation of any graph to $k$-edge-connected graphs}, 	
			Networks 19, 151--172 (1989)
	\bibitem{Egy} {J. Edmonds}, \textit{Edge-disjoint branchings}, 
		 in {Combinatorial Algorithms}, B. Rustin ed., Academic Press, New York, (1973)  91--96.
	\bibitem{FKLSzT}  {Q. Fortier, Cs. Kir\'aly, M. L\'eonard, Z. Szigeti, A. Talon}, 
		\textit{Old and new results on packing arborescences}, Discret. Appl. Math. 242 (2018) 26--33. 
	\bibitem{FA78} A. Frank, \textit{On disjoint trees and arborescences,}
		 In  Algebraic Methods in Graph Theory, 25, Colloquia Mathematica Soc. J. Bolyai, 
		 Norh-Holland, (1978) 59--169.
	\bibitem{FA92} A. Frank, \textit{Augmenting graphs to meet edge-connectivity requirements,} 
		SIAM J. Discrete Math. 5(1) (1992) 22--53.
	\bibitem{book} {A. Frank}, Connections in Combinatorial Optimization, 
		Oxford University Press, 2011.
	\bibitem {fkiki} A. Frank, T. Kir\'aly,  Z. Kir\'aly, \textit{On the orientation of graphs and hypergraphs,} 
		Discret. Appl. Math. 131(2) (2003) 385--400.
	\bibitem {fkk} A. Frank, T. Kir\'aly, M. Kriesell, \textit{On decomposing a hypergraph into $k$ connected sub-hypergraphs,} 
		Discret. Appl. Math. 131 (2) (2003) 373--383.
	\bibitem{gao} H. Gao, \textit{Covering a supermodular-like function in a mixed hypergraph}, 
		arXiv:2402.05458v1 (2024)
	\bibitem{gy2} H. Gao, D. Yang, \textit{Packing of spanning mixed arborescences}, 
		Journal of Graph Theory, Volume 98, Issue 2 (2021) 367--377.
	\bibitem{hopmarszig} P. Hoppenot, M. Martin, Z. Szigeti, \textit{Packing forests}, 
		arXiv:2310.13341 (2024)
	\bibitem{HSz5} {F. H\"orsch, Z. Szigeti,} 
		\textit{Packing of mixed hyperarborescences with flexible roots via matroid intersection}, 
		Electronic Journal of Combinatorics 28 (3) (2021) P3.29.
	\bibitem {Ketto} K. Menger, {\it Zur allgemeinen Kurventheorie}, 
		Fund. Math. 10, 96-115, 1927,
	\bibitem{NW} C. St. J. A. Nash-Williams, \textit{Edge-disjoints spanning trees of finite graphs}, 
		Journal of the London Mathematical Society, 36 (1961) 445--450.
	\bibitem{PCK}{Y. H. Peng, C. C. Chen, K. M. Koh,} \textit{On edge-disjoint spanning forests of multigraphs}, 
		Soochow Journal of Mathematics 17 (1991) 317--326.
	\bibitem{szighyp}{Z. Szigeti,} \textit{Packing mixed hyperarborescences}, 
		Discrete Optimization 50 (2023) 100811.
	\bibitem{szigrooted}{Z. Szigeti,} \textit{Matroid-rooted packing of arborescences}, 
		submitted (2023) 
	\bibitem {SZAUG} {Z. Szigeti,} \textit{On edge-connectivity augmentations of graphs and hypergraphs}, 
		W. Cook, L. Lov\'asz, J. Vygen (Editors): Research Trends in Combinatorial Optimization. 
		Springer, Berlin 2009.
	\bibitem{Tu} W.T. Tutte, \textit{On the problem of decomposing a graph into $n$ connected factors}, 				
		Journal of the London Mathematical Society, 36 (1961) 221--230.
	\bibitem{WANA} T. Watanabe, A. Nakamura, \textit{Edge-connectivity augmentation problems},
		J. Comput. Syst. Sci. 35,  (1987) 96--144.
\end{thebibliography}
\end{document}